\newtheorem{thm}{Theorem}[section]
\newtheorem{cor}[thm]{Corollary}
\newtheorem{lem}[thm]{Lemma}
\newtheorem{prop}[thm]{Proposition}
\theoremstyle{definition}
\newtheorem{defn}[thm]{Definition}
\theoremstyle{remark}
\newtheorem{rem}[thm]{Remark}
\theoremstyle{conclusion}
\numberwithin{equation}{section}
\begin{document}
\title[ Scattering theory for NLS in weighted Sobolev space]
{ Some Results on the Scattering Theory for Nonlinear Schr\"{o}dinger Equations in Weighted $L^{2}$ Space }

\author{Wei Dai}

\address{Institute of Applied Mathematics, AMSS, Chinese Academy of Sciences, Beijing 100190, P.R.China}
\email{daiwei@amss.ac.cn}

\address{Department of Mathematics, University of California, Berkeley,
  CA 94720, USA}
\email{weidai@math.berkeley.edu}

%\thanks{}
\begin{abstract}

We investigate the scattering theory for the nonlinear Schr\"{o} \\ dinger
equation $i \partial_{t}u+ \Delta u+\lambda|u|^\alpha
u=0$ in $\Sigma=H^{1}(\mathbb{R}^{d})\cap L^{2}(|x|^{2};dx)$. We show that scattering states $u^{\pm}$ exist in $\Sigma$ when $\alpha_{d}<\alpha<\frac{4}{d-2}$, $d\geq3$, $\lambda\in \mathbb{R}$ with certain smallness assumption on the initial data $u_{0}$, and when $\alpha(d)\leq \alpha< \frac{4}{d-2}$($\alpha\in [\alpha(d), \infty)$, if $d=1,2$), $\lambda>0$ under suitable conditions on $u_{0}$, where $\alpha_{d}$, $\alpha(d)$ are the positive root of the polynomial $dx^{2}+dx-4$ and $dx^{2}+(d-2)x-4$ respectively. Specially, when $\lambda>0$, we obtain the existence of $u^{\pm}$ in $\Sigma$ for $u_{0}$ below a mass-energy threshold $M[u_{0}]^{\sigma}E[u_{0}]<\lambda^{-2\tau}M[Q]^{\sigma}E[Q]$ and satisfying an mass-gradient bound $\|u_{0}\|_{L^{2}}^{\sigma}\|\nabla u_{0}\|_{L^{2}}<\lambda^{-\tau}\|Q\|_{L^{2}}^{\sigma}\|\nabla Q\|_{L^{2}}$ with $\frac{4}{d}<\alpha<\frac{4}{d-2}$($\alpha\in (\frac{4}{d}, \infty)$, if $d=1,2$), and also for oscillating data at critical power $\alpha=\alpha(d)$, where $\sigma=\frac{4-(d-2)\alpha}{\alpha d-4}$, $\tau=\frac{2}{\alpha d-4}$ and $Q$ is the ground state. We also study the convergence of $u(t)$ to the free solution $e^{it\Delta}u^{\pm}$ in $\Sigma$, where $u^{\pm}$ is the scattering state at $\pm\infty$ respectively.

\end{abstract}
\maketitle {\small {\bf Keywords: Nonlinear Schr\"{o}dinger equation; Scattering theory;
Oscillating data; Weighted spaces; Lorentz space} \\

{\bf 2000 MSC} Primary: 35Q55. Secondary: 35B30,46E35.}

\section{INTRODUCTION}

In this paper we study the scattering theory for the nonlinear Schr\"{o}dinger equation
\begin{equation}\label{eq1}
    \left\{
  \begin{array}{ll}
    i \partial_{t}u+ \Delta u+\lambda|u|^{\alpha}u=0, \,\,\, t\in \mathbb{R}, \, x\in \mathbb{R}^{d}\\
    u(0,x)=u_{0}(x)\in \Sigma, \,\,\, x\in \mathbb{R}^{d}
  \end{array}
\right.
\end{equation}
in weighted space $\Sigma=H^{1}(\mathbb{R}^{d})\cap L^{2}(|x|^{2};dx)$, where $d$ denotes the spatial dimension, $\lambda\in \mathbb{R}\setminus \{0\}$ and $0<\alpha<\frac{4}{d-2}$ ($0<\alpha<\infty$ if $d=1,2$).

As is well-known, if $\lambda<0$, or $\lambda>0$ and $\alpha<4/d$, the unique solution $u(t)$ to the Cauchy problem (\ref{eq1}) is global in time and bounded in $H^{1}(\mathbb{R}^{d})$, and $u\in C(\mathbb{R},\Sigma)$ (see e.g. \cite{C2}). If $\lambda>0$, $\frac{4}{d}\leq \alpha<\frac{4}{d-2}$ ($4/d\leq \alpha<\infty$, if $d=1,2$), the local well-posedness in $\Sigma$ has been established by using Kato's fixed point method to the equivalent integral equation(Duhamel's formula)
\begin{equation}\label{eq2}
    u(t)=e^{it\Delta}u_{0}+i\lambda\int^{t}_{0}e^{i(t-\tau)\Delta}|u(\tau)|^{\alpha}u(\tau) d\tau,
\end{equation}
in an appropriate space (see \cite{C2,G1,K1}), where $(e^{it\Delta})_{t\in \mathbb{R}}$ is the one parameter Schr\"{o}dinger group. More precisely, given $u_{0}\in \Sigma$, there exists $T>0$ and a unique solution $u\in C([-S,T],\Sigma)$ of (\ref{eq1}), which can be extended to a maximal existence interval $(-T_{min},T_{max})$. This solution either exist globally or blow up in finite time, the global versus blow-up dichotomy is associated inseparably with the mass-energy threshold condition of the initial data $u_{0}$(see \cite{D1,F1,H1,K2}). Moreover, for arbitrary $u_{0}\in\Sigma$, the corresponding solution $u(t)$ satisfies the mass and energy conservation laws:
\begin{equation}\label{conservation1}
    M[u(t)]=\int_{\mathbb{R}^{d}}|u(t,x)|^{2}dx=M[u_{0}],
\end{equation}
\begin{equation}\label{conservation2}
    E[u(t)]=\frac{1}{2}\int_{\mathbb{R}^{d}}|\nabla u(t,x)|^{2}dx-\frac{\lambda}{\alpha+2}\int_{\mathbb{R}^{d}}|u(t,x)|^{\alpha+2}dx=E[u_{0}],
\end{equation}
and the pseudo-conformal conservation law
\begin{equation}\label{conformal-conservation3}
    \frac{d}{dt}(\|(x+2it\nabla)u(t)\|_{L^{2}}^{2}-\frac{8\lambda t^{2}}{\alpha+2}\|u(t)\|_{L^{\alpha+2}}^{\alpha+2})=4\lambda \frac{\alpha d-4}{\alpha+2}t\|u(t)\|_{L^{\alpha+2}}^{\alpha+2}.
\end{equation}
Thus we will denote the mass and energy by $M[u]$ and $E[u]$ respectively, with no reference to the time $t$.

If the solution $u(t)$ is global in time, we will care about its asymptotic behavior as $t\rightarrow\pm\infty$. To state our results on this topic, we will introduce some basic notions of scattering theory (see \cite{C2}) below.

Let $u_{0}\in\Sigma$ be such that the corresponding solution u of (\ref{eq1}) is defined for all $t\geq 0$, i.e., $T_{max}=\infty$. If the limit
\begin{equation}\label{u+}
    u^{+}=\lim_{t\rightarrow +\infty}e^{-it\Delta}u(t)
\end{equation}
exists in $\Sigma$, we say that $u^{+}$ is the scattering state of $u_{0}$ at $+\infty$. Also, if $u_{0}\in\Sigma$ is such that the solution of (\ref{eq1}) is defined for all $t\leq 0$, i.e., $T_{min}=\infty$, and if the limit
\begin{equation}\label{u-}
    u^{-}=\lim_{t\rightarrow -\infty}e^{-it\Delta}u(t)
\end{equation}
exists in $\Sigma$, we say that $u^{-}$ is the scattering state of $u_{0}$ at $-\infty$.

We observe that saying that $u_{0}$ has a scattering state at $\pm\infty$ is a way of saying that $u(t)$ behaves as $t\rightarrow\pm\infty$ like the solution $e^{it\Delta}u^{\pm}$ of the linear Schr\"{o}dinger equation. We set
\begin{equation}\label{R+}
    \mathcal{R}_{+}=\{u_{0}\in\Sigma: T_{max}=\infty \,\,\,and \,\,\,the \,\,\,limit \,\,\,(\ref{u+}) \,\,\,exists\}\,
\end{equation}
and
\begin{equation}\label{R-}
    \mathcal{R}_{-}=\{u_{0}\in\Sigma: T_{min}=\infty \,\,\,and \,\,\,the \,\,\,limit \,\,\,(\ref{u-}) \,\,\,exists\},
\end{equation}
which denote the set of initial values $u_{0}$ that have a scattering state at $\pm\infty$.

\begin{rem}\label{rem1}
We can see that changing $t$ to $-t$ in the equation (\ref{eq1}) corresponds to changing $u$ to $\overline{u}$, which means changing $u_{0}$ to $\overline{u}_{0}$. So we have
\[\mathcal{R}_{-}=\overline{\mathcal{R}_{+}}=\{u_{0}\in \Sigma: \overline{u}_{0}\in \mathcal{R}_{+}\}.\]
\end{rem}

The scattering theory for (\ref{eq1}) in weighted space $\Sigma$ has been quite extensively studied (see \cite{C1,G1,G2,N3,S1,T2,T3}). It is well-known that if $\alpha\leq \frac{2}{d}$, then no scattering theory can be developed for equation (\ref{eq1})(see \cite{T2}). In the defocusing case $\lambda<0$, low energy scattering theory holds in $\Sigma$ provided $4/(d+2)<\alpha<4/(d-2)$($2<\alpha<\infty$, if $d=1$). Moreover, if $\alpha\geq \alpha(d)=\frac{2-d+\sqrt{d^{2}+12d+4}}{2d}$, then scattering theory holds in whole $\Sigma$ space(see \cite{G1,N3,T3}), and we notice that the asymptotic completeness for $\frac{2}{d}<\alpha<\alpha(d)$ was established recently in \cite{S2}. For the focusing case $\lambda>0$, to our best knowledge, there is only a little positive answers, there is no low energy scattering if $\alpha<4/(d+2)$, but when $\alpha>4/(d+2)$, a low energy scattering theory holds in $\Sigma$(see \cite{C1}). If $\alpha\geq \frac{4}{d}$, some solutions will blow up in finite time.

In this paper we are mainly concerned with the scattering theory in $\Sigma$ for focusing NLS, but we also study the convergence of a global solution $u(t)$ of (\ref{eq1}) to the free solutions generated by its scattering states $u^{\pm}$ in $\Sigma$, our main results is Theorem \ref{small data}, \ref{XW}, \ref{rapidly decaying}, \ref{Rapid decay data}, \ref{characterization of R}, \ref{mass supercritical}, \ref{convergence} and Corollary \ref{Oscillating data}.

The content of our paper can be mainly divided into four parts.

1. Cazenave and Weissler have proved in \cite{C1} that if $\alpha>\frac{4}{d+2}$ and $\|u_{0}\|_{\Sigma}$ is small, then scattering states $u^{\pm}$ exist in $\Sigma$ at $\pm\infty$. They also have shown in \cite{C1} that if $\lambda>0$, $\alpha<\frac{4}{d+2}$, then the scattering theory for small data in $\Sigma$ fails, there are initial values $u_{0}\in\Sigma$ with arbitrary small norm $\|u_{0}\|_{\Sigma}$ that do not have a scattering state, even in the sense of $L^{2}(\mathbb{R}^{d})$. However, by applying the pseudo-conformal transformation and studying the resulting nonautonomous NLS, for $\lambda\in\mathbb{R}\setminus\{0\}$, $d\geq 3$, $u_{0}\in \Sigma$, let $v_{0}=e^{-\frac{i|x|^{2}}{4}}u_{0}$, we show in Theorem \ref{small data} that if $\alpha>\alpha_{d}=\frac{-d+\sqrt{d^{2}+16d}}{2d}$ and $u_{0}$ is such that $\|v_{0}\|_{H^{2}}$ is small enough, then scattering states $u^{\pm}$ exist in $\Sigma$ at $\pm\infty$. Note that $d\geq 3$, $\max\{\frac{2}{d},\frac{4}{d+4}\}<\alpha_{d}<\frac{4}{d+2}$, thus Theorem \ref{small data} extends the scattering theory for small initial values to the range $\alpha_{d}<\alpha\leq \frac{4}{d+2}$.

2. Next, we consider the scattering theory for the focusing NLS with $\alpha\geq \alpha(d)=\frac{2-d+\sqrt{d^{2}+12d+4}}{2d}$, it's obvious that this problem is closely associated with the decay property of the solution $u(t)$. Cazenave and Weissler have proposed in \cite{C1} a notion of ``positively rapidly decaying solutions" (i.e., a positively global solution $u$ of (\ref{eq1}) which satisfies $\|u\|_{L^{a}((0,\infty),L^{\alpha+2})}<\infty$, where $a=\frac{2\alpha(\alpha+2)}{4-\alpha(d-2)}$, $0<\alpha<\frac{4}{d-2}$, and $0<\alpha<\infty$, if $d=1,2$), and characterized the sets $\mathcal{R}_{\pm}$ in the case $\lambda>0$, $\alpha>\alpha(d)$ in terms of rapidly decaying solutions(refer to \cite{C1}, Theorem 4.12). But we can see that if $\alpha\leq \alpha(d)$, the rapidly decaying solutions should decay faster than the optimal rate $t^{-\frac{\alpha d}{2(\alpha+2)}}$ as $t\rightarrow+\infty$, that's impossible unless $u\equiv 0$(see \cite{C1}, Proposition 3.15). Thus we give a refined definition of Rapidly Decaying Solution below:
\begin{defn}\label{RDS}
Suppose $\alpha(d)\leq \alpha<\frac{4}{d-2}$ ($\alpha(d)\leq \alpha<\infty$, if $d=1,2$). A positively global solution $u$ of (\ref{eq1}) is rapidly decaying if
\begin{equation}\label{D+1}
\|u\|_{L^{a,\infty}((0,\infty),L^{\alpha+2})}<\infty, \,\,\,\,\, for \,\,\, \alpha=\alpha(d);
\end{equation}
\begin{equation}\label{D+2}
\|u\|_{L^{a}((0,\infty),L^{\alpha+2})}<\infty, \,\,\,\,\, for \,\,\, \alpha>\alpha(d);
\end{equation}
where $a=\frac{2\alpha(\alpha+2)}{4-\alpha(d-2)}$, and $\|\cdot\|_{L^{a,\infty}}$ denotes the weak $L^{a}$ norm.
Correspondingly, we say a negatively global solution $u$ of (\ref{eq1}) is rapidly decaying if it satisfies (\ref{D+1}) or (\ref{D+2}) with the time interval changed into $(-\infty,0)$ respectively.
\end{defn}
In Theorem \ref{rapidly decaying} we show that if $\alpha=\alpha(d)$, $d\neq2$, the positively(resp. negatively) rapidly decaying solutions have scattering states at $+\infty$(resp. $-\infty$), and we characterized the sets $\mathcal{R}_{\pm}$ for $\lambda>0$ in terms of rapidly decaying solutions in Theorem \ref{characterization of R}. Our method is mainly based on the lower bound estimate of the $L^{\alpha+2}$ norm for singular solutions to the following nonautonomous equation defined on time interval $[0,1)$:
\begin{equation}\label{nonautonomous1}
    i\partial_{s}v+\Delta v+\lambda(1-s)^{\frac{\alpha d-4}{2}}|v|^{\alpha}v=0.
\end{equation}

We will prove in Corollary \ref{Oscillating data} that if $\alpha=\alpha(d)$, then for arbitrary $\varphi\in H^{2}\cap \mathcal{F}(H^{2})\subset\Sigma$, given $b\in \mathbb{R}$, and let $\widetilde{u}_{b}$ be the corresponding global solution of (\ref{eq1}) with the initial value $\widetilde{u}_{b,0}=e^{i\Delta}(e^{i\frac{b|x|^{2}}{4}}\varphi)\in\Sigma$, there exists $0<b_{0}<\infty$ such that if $b\geq b_{0}$(resp. $b\leq -b_{0}$), then the global solution $\widetilde{u}_{b}$ is positively(resp. negatively) rapidly decaying, therefore scattering states $u_{b}^{+}$ exist at $+\infty$ and $\widetilde{u}_{b,0}\in \mathcal{R}_{+}$(resp. $u_{b}^{-}$ exist at $-\infty$ and $\widetilde{u}_{b,0}\in \mathcal{R}_{-}$). Moreover, a byproduct is the sets $\mathcal{R}_{\pm}$ are unbounded subsets of $L^{2}(\mathbb{R}^{d})$(see Theorem \ref{characterization of R}).

3. We also investigate the scattering theory in $\Sigma$ under certain suitable assumptions on initial data $u_{0}$(see Theorem \ref{XW}, Theorem \ref{Rapid decay data} and Theorem \ref{mass supercritical}). Specially, note that for $\frac{4}{d}<\alpha\leq \frac{4}{d-2}$, there are a lot of literature devoted to the scattering theory for focusing NLS in $H^{1}(\mathbb{R}^{d})$ for initial data $u_{0}\in H^{1}(\mathbb{R}^{d})$ below a mass-energy threshold and satisfying an mass-gradient bound (see Kenig and Merle \cite{K2}, Killip and Visan \cite{K4} for the energy-critical case $\alpha=\frac{4}{d-2}$, \cite{D1} and \cite{H1} for the 3D cubic case, and \cite{F1} for the general energy-subcritical case). We will show in Theorem \ref{mass supercritical} that if $\lambda>0$, $\frac{4}{d}<\alpha<\frac{4}{d-2}$($\alpha\in (\frac{4}{d}, \infty)$, if $d=1,2$), assume $u_{0}\in\Sigma$ below a mass-energy threshold $M[u_{0}]^{\sigma}E[u_{0}]<\lambda^{-2\tau}M[Q]^{\sigma}E[Q]$ and satisfying an mass-gradient bound $\|u_{0}\|_{L^{2}}^{\sigma}\|\nabla u_{0}\|_{L^{2}}<\lambda^{-\tau}\|Q\|_{L^{2}}^{\sigma}\|\nabla Q\|_{L^{2}}$, then scattering states $u^{\pm}$ exist in $\Sigma$ at $\pm\infty$, where $\sigma=\frac{4-(d-2)\alpha}{\alpha d-4}$, $\tau=\frac{2}{\alpha d-4}$ and $Q$ is the ground state solution to $-\Delta Q+Q=|Q|^{\alpha}Q$.

4. Finally we study the asymptotic behavior of $\|u(t)-e^{it\Delta}u^{\pm}\|_{\Sigma}$ under the assumption $u^{\pm}$ exist at $\pm\infty$. In general, since $e^{it\Delta}$ is not an isometry of $\Sigma$, it is not known whether we can deduce $\|u(t)-e^{it\Delta}u^{\pm}\|_{\Sigma}\rightarrow 0$ from the scattering asymptotic property $\|e^{-it\Delta}u(t)-u^{\pm}\|_{\Sigma}\rightarrow 0$. A positive answer has been given by B\'{e}gout \cite{B1} for $d\leq 2$, $\alpha>\frac{4}{d}$, and $3\leq d\leq 5$, $\alpha>\frac{8}{d+2}$. Our paper extends this result under certain suitable conditions on $u_{0}$. Theorem \ref{convergence} will show that if we assume $u_{0}\in \Sigma\cap W^{1,\rho'}$, then we can obtain the convergence $\|u(t)-e^{it\Delta}u^{\pm}\|_{\Sigma}\rightarrow 0$ for $3\leq d\leq 9$, $\alpha>\frac{16}{3d+2}$, where $\rho=\frac{4d}{2d-\alpha(d-2)}$.

The rest of the paper is organized as follows. In Section 2, we will give some preliminaries and notations. In Section 3 we will prove Theorem \ref{small data} for the small initial data scattering and in Section 4 we prove a lower bound estimate for the $L^{\alpha+2}$ norm of singular solutions to the nonautonomous equation (\ref{nonautonomous1}). Section 5 is devoted to some $\Sigma$ scattering results for (\ref{eq1}) in the focusing case with $\alpha\geq\alpha(d)$ and Section 6 to the study on the asymptotic convergence of the scattering solution to a free solution in $\Sigma$. \\

\section{Notations and preliminaries}

\subsection{Some notations} Throughout this paper, we use the following notation. $\bar{z}$ is the conjugate of the complex number $z$, $\Re z$ and $\Im z$ are respectively the real the imaginary part of the complex number $z$. All function spaces involved are spaces of complex valued functions. We denote by $p'$ the conjugate of the exponent $p\in[1,\infty]$ defined by $\frac{1}{p}+\frac{1}{p'}=1$, and $L^{p}=L^{p}(\mathbb{R}^{d})=L^{p}(\mathbb{R}^{d};\mathbb{C})$ with norm $\|\cdot\|_{L^{p}}$; $H^{1}=H^{1}(\mathbb{R}^{d})=H^{1}(\mathbb{R}^{d};\mathbb{C})$ with norm $\|\cdot\|_{H^{1}}$; and for all $(f,g)\in L^{2}\times L^{2}$, the scalar product $(f,g)_{L^{2}}=\Re \int_{\mathbb{R}^{d}}f(x)\overline{g(x)}dx$. Let $L^{q}_{t}(\mathbb{R},L^{r}_{x}(\mathbb{R}^{d}))$  denote the mixed Banach space with norm defined by
\[\|u\|_{L^{q}(\mathbb{R},L^{r})}
=(\int_{\mathbb{R}}(\int_{\mathbb{R}^{d}}|u(t,x)|^{r}dx)^{q/r}dt)^{1/q},\]
with the usual modifications when $q$ or $r$ is infinity, or when
the domain $\mathbb{R}\times\mathbb{R}^{d}$ is replaced by
a smaller region of spacetime such as $I\times\mathbb{R}^{d}$.
In what follows positive constants will be denoted by $C$ and will change from line to line. If necessary, by $C(\star,\cdots,\star)$ we denote positive constants depending only on the quantities appearing in parentheses continuously.

We denote by $(e^{it\Delta})_{t\in\mathbb{R}}$ the Schr\"{o}dinger group, which is isometric on $H^{s}$ and $\dot{H}^{s}$ for every $s\geq0$, and satisfies the Dispersive estimate and Strichartz's estimates(for more details, see Keel and Tao \cite{K3}). We will use freely the well-known properties of of the Schr\"{o}dinger group $(e^{it\Delta})_{t\in\mathbb{R}}$ (see e.g. Chapter 2 of \cite{C2} for an account of these properties). In convenience, we will introduce the definition of ``admissible pair" below, which plays an important role in space-time estimates.
\begin{defn}
We say that a pair $(q,r)$ is admissible if
\begin{equation*}\label{admissible pair}
    \frac{2}{q}=\delta(r)=d(\frac{1}{2}-\frac{1}{r})
\end{equation*}
and $2\leq r\leq\frac{2d}{d-2}$ ($2\leq r\leq\infty$ if $d=1$, $2\leq r<\infty$ if $d=2$). Note that if $(q,r)$ is an admissible pair, then $2\leq q\leq\infty$, the pair $(\infty,2)$ is always admissible, and the pair $(2,\frac{2d}{d-2})$ is admissible if $d\geq3$.
\end{defn}

\subsection{Generalized H\"{o}lder's and Young's inequality in Lorentz spaces} Our paper involves estimates in the general Lorentz spaces $L^{p,q}(0<p<\infty, 0<q\leq \infty)$ equipped with the norm
\[\|f\|_{L^{p,q}(X,\mu)}=p^{1/q}\|\lambda\mu(\{|f|\geq\lambda\})^{1/p}\|_{L^{q}(\mathbb{R}^{+},\frac{d\lambda}{\lambda})}\]
(refer to \cite{T1} for a review). A special case is the weak $L^{p}$ space $L^{p,\infty}(0<p<\infty)$ with norm defined by $\|f\|_{L^{p,\infty}(X,\mu)}=\sup_{\lambda>0}\lambda\mu(\{|f|\geq\lambda\})^{1/p}$.
An useful formula is for any $0<p,r<\infty$ and $0<q\leq\infty$,
\begin{equation}\label{Lorentz1}
    \|f^{r}\|_{L^{p,q}(X,\mu)}\sim_{p,q,r}\|f\|_{L^{pr,qr}(X,\mu)}^{r}.
\end{equation}
If $f:\mathbb{R}^{+}\rightarrow\mathbb{R}^{+}$ is a monotone non-increasing function, we have
\begin{equation}\label{Lorentz0}
    \|f\|_{L^{p,q}(X,\mu)}=\|f(t)t^{1/p}\|_{L^{q}(\mathbb{R}^{+},\frac{dt}{t})}.
\end{equation}
Below we give the refined H\"{o}lder's and Young's inequality for Lorentz spaces $L^{p,q}$, due to O'Neil(see \cite{T1}and \cite{N2} for the proof).
\begin{lem}\label{Lorentz2}
If $0<p_{1},p_{2},p<\infty$ and $0<q_{1},q_{2},q\leq\infty$ obey $\frac{1}{p}=\frac{1}{p_{1}}+\frac{1}{p_{2}}$ and $\frac{1}{q}=\frac{1}{q_{1}}+\frac{1}{q_{2}}$, then
\begin{equation}\label{H in Lorentz}
    \|fg\|_{L^{p,q}}\leq C\|f\|_{L^{p_{1},q_{1}}}\|g\|_{L^{p_{2},q_{2}}}.
\end{equation}
If $1<p_{1},p_{2},p<\infty$ and $1\leq q_{1},q_{2},q\leq\infty$ obey $1+\frac{1}{p}=\frac{1}{p_{1}}+\frac{1}{p_{2}}$ and $\frac{1}{q}=\frac{1}{q_{1}}+\frac{1}{q_{2}}$, then
\begin{equation}\label{Y in Lorentz}
    \|f\ast g\|_{L^{p,q}}\leq C\|f\|_{L^{p_{1},q_{1}}}\|g\|_{L^{p_{2},q_{2}}}.
\end{equation}
\end{lem}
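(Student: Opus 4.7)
The plan is to follow O'Neil's classical rearrangement argument, reducing both estimates to one-dimensional weighted inequalities on $(0,\infty)$. I would first set notation by introducing the decreasing rearrangement $f^{*}$ of $|f|$ and the maximal average $f^{**}(t):=\frac{1}{t}\int_{0}^{t}f^{*}(s)\,ds$. Applying (\ref{Lorentz0}) to the non-increasing function $f^{*}$ gives the representation $\|f\|_{L^{p,q}}=\|t^{1/p}f^{*}(t)\|_{L^{q}(\mathbb{R}^{+},dt/t)}$, and for $p>1$ the weighted Hardy inequality yields the equivalence $\|f\|_{L^{p,q}}\sim \|t^{1/p}f^{**}(t)\|_{L^{q}(dt/t)}$. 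This translates both halves of the lemma into one-variable statements about weighted $L^{q}$ norms of products.

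For the H\"{o}lder inequality (\ref{H in Lorentz}) I would start from the elementary pointwise rearrangement bound $(fg)^{*}(t_{1}+t_{2})\leq f^{*}(t_{1})g^{*}(t_{2})$, which follows from the set inclusion $\{|fg|>f^{*}(t_{1})g^{*}(t_{2})\}\subset\{|f|>f^{*}(t_{1})\}\cup\{|g|>g^{*}(t_{2})\}$. Setting $t_{1}=t_{2}=t/2$ and inserting into the representation gives
\[
\|fg\|_{L^{p,q}}\leq C\bigl\|t^{1/p_{1}}f^{*}(t/2)\cdot t^{1/p_{2}}g^{*}(t/2)\bigr\|_{L^{q}(dt/t)},
\]
where the split of the weight uses $1/p=1/p_{1}+1/p_{2}$. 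A change of variables $t\mapsto 2t$ (which preserves $dt/t$) followed by the ordinary H\"{o}lder inequality on $L^{q}(dt/t)$ with exponents $q_{1},q_{2}$ (allowed since $1/q=1/q_{1}+1/q_{2}$) closes this half of the lemma.

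For the Young inequality (\ref{Y in Lorentz}) the core ingredient is O'Neil's pointwise convolution estimate
\[
(f*g)^{**}(t)\leq t\,f^{**}(t)\,g^{**}(t)+\int_{t}^{\infty}f^{*}(s)\,g^{*}(s)\,ds,
\]
which I would derive by decomposing $f=f\chi_{\{|f|>f^{*}(t)\}}+f\chi_{\{|f|\leq f^{*}(t)\}}$ (and likewise for $g$) and estimating the four cross-convolutions through the trivial bounds $\|h*k\|_{\infty}\leq\|h\|_{1}\|k\|_{\infty}$ and $\|h*k\|_{1}\leq\|h\|_{1}\|k\|_{1}$. Taking $\|t^{1/p}\cdot\|_{L^{q}(dt/t)}$ of both sides, the first summand is treated exactly as in the H\"{o}lder step, after noting that the Young-type arithmetic $1+1/p=1/p_{1}+1/p_{2}$ reshuffles $t\cdot t^{1/p}$ as $t^{1/p_{1}}\cdot t^{1/p_{2}}$, and using $f^{**}\sim f^{*}$ in each $L^{p_{i},q_{i}}$. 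For the tail, a weighted Hardy inequality gives
\[
\Bigl\|t^{1/p}\int_{t}^{\infty}f^{*}(s)g^{*}(s)\,ds\Bigr\|_{L^{q}(dt/t)}\leq C\bigl\|t^{1+1/p}f^{*}(t)g^{*}(t)\bigr\|_{L^{q}(dt/t)},
\]
and the standard H\"{o}lder split of $t^{1+1/p}$ as $t^{1/p_{1}}\cdot t^{1/p_{2}}$ finishes the argument.

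The main obstacle, and the reason the Young half of the lemma requires $1<p_{1},p_{2},p<\infty$, is the weighted Hardy inequality on the tail $\int_{t}^{\infty}f^{*}(s)g^{*}(s)\,ds$ together with the equivalence $f^{**}\sim f^{*}$ in the $L^{p_{i},q_{i}}$ norms: both break down at $p=1$. The H\"{o}lder half avoids these steps, which is why it extends all the way down to $0<p_{i},p<\infty$. When $q<1$ the functional $\|\cdot\|_{L^{q}(dt/t)}$ is only a quasi-norm, but each operation above is monotone in the rearrangement variables, so O'Neil's argument still goes through in the full quasi-Banach range.
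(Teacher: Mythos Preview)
The paper does not supply its own proof of this lemma; it merely records the statement and cites O'Neil (via \cite{T1} and \cite{N2}) for the argument. Your sketch is precisely O'Neil's rearrangement proof---the pointwise bound $(fg)^{*}(t)\leq f^{*}(t/2)g^{*}(t/2)$ followed by H\"older on $L^{q}(dt/t)$ for (\ref{H in Lorentz}), and the O'Neil convolution inequality for $(f*g)^{**}$ combined with the weighted Hardy inequality for (\ref{Y in Lorentz})---so you have correctly reconstructed exactly the proof the paper is pointing to. Your identification of where the restrictions $p_{i}>1$ enter (the Hardy step controlling $f^{**}$ by $f^{*}$ in the $L^{p_{i},q_{i}}$ norm) is also accurate.
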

If we use (\ref{Y in Lorentz}) with $q=q_{2}=2$ and $q_{1}=\infty$ instead of the Hardy-Littlewood-Sobolev inequality, we obtain the following Strichartz's estimate in Lorentz spaces.
\begin{lem}\label{Lorentz3} we have the following properties:
\[\|e^{i\Delta\cdot}\varphi\|_{L^{q,2}(\mathbb{R},L^{r})}\leq C\|\varphi\|_{L^{2}} \,\,\, for \,\,\, every \,\,\, \varphi\in L^{2}(\mathbb{R}^{d});\]
\[\|\int_{0}^{t}e^{i(t-\tau)\Delta}f(\tau)d\tau\|_{L^{q,2}(I,L^{r})\cap L^{\infty}(I,L^{2})}\leq C\|f\|_{L^{q',2}(I,L^{r'})},\]
for every $f\in L^{q',2}(I,L^{r'}(\mathbb{R}^{d}))$ and some constant $C$ independent of $I$, where $(q,r)$ is a admissible pair, $2<q<\infty$, $I$ is an interval of $\mathbb{R}$ such that $0\in \bar{I}$.
\end{lem}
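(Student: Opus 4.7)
The plan is to deduce both estimates from the dispersive bound $\|e^{it\Delta}f\|_{L^{r}} \leq C|t|^{-\delta(r)}\|f\|_{L^{r'}} = C|t|^{-2/q}\|f\|_{L^{r'}}$ using the $TT^{*}$ method, replacing the usual Hardy--Littlewood--Sobolev step by the Young inequality (\ref{Y in Lorentz}). First I would set up $T\varphi(t,x) = e^{it\Delta}\varphi(x)$, so $T^{*}g(x) = \int_{\mathbb{R}} e^{-is\Delta}g(s,\cdot)\,ds$ and $TT^{*}g(t,x) = \int_{\mathbb{R}} e^{i(t-s)\Delta}g(s,\cdot)\,ds$. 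By the dispersive estimate, pointwise in $t$,
\[
\Bigl\| \int_{\mathbb{R}} e^{i(t-s)\Delta}g(s)\,ds \Bigr\|_{L^{r}_{x}} \leq C\int_{\mathbb{R}} |t-s|^{-2/q}\,\|g(s)\|_{L^{r'}_{x}}\,ds = C\,(K*G)(t),
\]
where $K(t) = |t|^{-2/q}$ and $G(s) = \|g(s)\|_{L^{r'}_{x}}$.

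The crucial observation is that $K \in L^{q/2,\infty}(\mathbb{R})$ since $|t|^{-2/q}$ is the model weak $L^{q/2}$ function on the line. Applying (\ref{Y in Lorentz}) with exponents $(p_{1},q_{1}) = (q/2,\infty)$, $(p_{2},q_{2}) = (q',2)$, $(p,q) = (q,2)$, whose admissibility conditions $1+\tfrac{1}{q} = \tfrac{2}{q}+\tfrac{1}{q'}$ and $\tfrac{1}{2} = 0+\tfrac{1}{2}$ are immediate (and $1<q/2, q', q<\infty$ since $2<q<\infty$), one obtains
\[
\|K*G\|_{L^{q,2}(\mathbb{R})} \leq C\,\|K\|_{L^{q/2,\infty}}\,\|G\|_{L^{q',2}(\mathbb{R})} \leq C\,\|g\|_{L^{q',2}(\mathbb{R},L^{r'})}.
\]
Hence $TT^{*}\colon L^{q',2}_{t}L^{r'}_{x}(\mathbb{R}\times\mathbb{R}^{d}) \to L^{q,2}_{t}L^{r}_{x}(\mathbb{R}\times\mathbb{R}^{d})$, and by the standard $TT^{*}$ duality (which is insensitive to the replacement of $L^{q}$ by $L^{q,2}$ since the dual of $L^{q,2}$ is $L^{q',2}$), this is equivalent to the boundedness of $T\colon L^{2}_{x}\to L^{q,2}_{t}L^{r}_{x}$, yielding the first estimate.

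For the second estimate, the $L^{\infty}(I,L^{2})$ component is immediate: from the dispersive bound and duality we get $T^{*}\colon L^{q',2}(I,L^{r'})\to L^{2}$ bounded, hence $\|\int_{0}^{t}e^{i(t-\tau)\Delta}f(\tau)\,d\tau\|_{L^{2}_{x}} \leq \|e^{-it\Delta}\int_{0}^{t}e^{i(t-\tau)\Delta}f(\tau)\,d\tau\|_{L^{2}_{x}} \leq C\|f\|_{L^{q',2}(I,L^{r'})}$ (writing the integral as $T^{*}(\chi_{(0,t)}f)$ up to an isometry). The $L^{q,2}(I,L^{r})$ component is the retarded version of the bound on $TT^{*}$ established above; to pass from the full real line version to the truncated integral $\int_{0}^{t}$, I would invoke the Christ--Kiselev lemma, which applies precisely in the range $q'<q$, i.e.\ $q>2$, and extends to Lorentz target spaces $L^{q,2}$ with $q>2$ by the usual dyadic decomposition argument.

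The main technical point, and the only place one must be careful, is the Christ--Kiselev step: one needs the Lorentz-valued variant of the lemma, which follows from the standard proof once one checks that $L^{q,2}$ is a rearrangement-invariant space with Boyd indices equal to $q$ and that the dyadic pieces assembled in the proof are controlled by the norm $\|K\|_{L^{q/2,\infty}}$ uniformly. Everything else---the dispersive estimate, the verification of the Young exponents, and the $TT^{*}$ duality---is routine.
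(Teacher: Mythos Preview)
Your proposal is correct and follows exactly the approach the paper sketches: the paper's entire argument is the one-line remark preceding the lemma, namely to run the standard Strichartz proof but replace the Hardy--Littlewood--Sobolev inequality by the Lorentz--Young inequality (\ref{Y in Lorentz}) with second indices $q=q_{2}=2$, $q_{1}=\infty$, which is precisely your $TT^{*}$ computation with $K=|t|^{-2/q}\in L^{q/2,\infty}$. Your added discussion of the Christ--Kiselev lemma for the retarded estimate supplies detail the paper omits, but the core idea is identical.
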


\subsection{Properties of the operator $P_{t}=x+2it\nabla$}
Let $P_{t}$ be the partial differential operator on $\mathbb{R}^{d+1}$ defined by $P_{t}u(t,x)=(x+2it\nabla)u(t,x)$. Operator $P_{t}$ has the following important commutative properties:
\begin{equation}\label{P0}
    [P_{t},i\partial_{t}+\Delta]=0,
\end{equation}
\begin{equation}\label{P3}
    P_{t}e^{it\Delta}=e^{it\Delta}x, \,\,\,\,\,\,\,\, e^{-it\Delta}P_{t}=xe^{-it\Delta},
\end{equation}
Where $[\cdot,\cdot]$ is the commutator bracket. An easy calculation shows that if $t\neq 0$, then
\begin{equation}\label{P1}
    P_{t}u=(x+2it\nabla)u=2ite^{i\frac{|x|^{2}}{4t}}\nabla(e^{-i\frac{|x|^{2}}{4t}}u),
\end{equation}
and so \[\|(x+2it\nabla)u\|_{L^{2}}^{2}=4t^{2}\|\nabla(e^{-i\frac{|x|^{2}}{4t}}u)\|_{L^{2}}^{2}.\]
Let $v(t,x)=e^{-i\frac{|x|^{2}}{4t}}u(t,x)$, it follows from (\ref{P1}) that
\[|P_{t}(|u|^{\alpha}u)|=2|t||\nabla(e^{-i\frac{|x|^{2}}{4t}}|u|^{\alpha}u)|=2|t||\nabla(|v|^{\alpha}v)|.\]
From the above identity, (\ref{P1}) and H\"{o}lder's inequality, it follows that for any $1\leq p,q,r\leq \infty$ such that $\frac{1}{r}=\frac{\alpha}{p}+\frac{1}{q}$,
\begin{equation}\label{P2}
    \|P_{t}(|u|^{\alpha}u)\|_{L^{r}}\leq C|t|\|v\|_{L^{p}}^{\alpha}\|\nabla v\|_{L^{q}}\leq C\|u\|_{L^{p}}^{\alpha}\|P_{t}u\|_{L^{q}}.
\end{equation}

\subsection{Applications of the Pseudo-conformal Transformation}
We will investigate the scattering problem for (\ref{eq1}) by applying the pseudo-conformal transformation(see Chapter 7 in \cite{C2} for a review). By Remark \ref{rem1}, we can mainly concern about positively global solution $u(t)$ defined on $(0,+\infty)$, the scattering problem for $t\rightarrow -\infty$ can be treated similarly.

we consider the variables $(t,x)\in \mathbb{R}\times \mathbb{R}^{d}$ defined by
\begin{equation}\label{Variables PC-Trans}
    t=\frac{s}{1-s}, \,\,\, x=\frac{y}{1-s}, \,\,\, or \,\,\, equivalently, \,\,\, s=\frac{t}{1+t}, \,\,\, y=\frac{x}{1+t}.
\end{equation}
Given $0\leq a<b\leq \infty$ and $u$ defined on $(a,b)\times \mathbb{R}^{d}$, we set
\begin{equation}\label{PC-Trans}
    v(s,y)=(1-s)^{-\frac{d}{2}}u(\frac{s}{1-s},\frac{y}{1-s})e^{-i\frac{|y|^{2}}{4(1-s)}}
    =(1+t)^{\frac{d}{2}}u(t,x)e^{-i\frac{|x|^{2}}{4(1+t)}}
\end{equation}
for $y\in \mathbb{R}^{d}$ and $\frac{a}{1+a}<s<\frac{b}{1+b}$. In particular, if $u$ is defined on $(0,\infty)$, then $v$ is defined on $(0,1)$. One easily verifies that $u\in C([a,b],\Sigma)$ if and only if $v\in C([\frac{a}{1+a},\frac{b}{1+b}],\Sigma)$($0\leq a<b<\infty$ are given).

Furthermore, a straightforward calculation shows that $u$ satisfies (\ref{eq1}) on $(a,b)$ if and only if $v$ satisfies the nonautonomous Cauchy problem
\begin{equation}\label{nonautonomous}
    \left\{
  \begin{array}{ll}
    i \partial_{s}v+ \Delta_{y} v+\lambda(1-s)^{\frac{\alpha d-4}{2}}|v|^{\alpha}v=0, \,\,\, s>0, \, y\in \mathbb{R}^{d}\\
    v(0,y)=v_{0}(y)=u_{0}(x)e^{-i\frac{|x|^{2}}{4}}\in \Sigma, \,\,\, y\in \mathbb{R}^{d}
  \end{array}
\right.
\end{equation}
on the interval $(\frac{a}{1+a},\frac{b}{1+b})$. Note that the term $(1-s)^{\frac{\alpha d-4}{2}}$ is regular, except possibly at $t=1$, where it is singular for $\alpha<\frac{4}{d}$. Moreover, the following identities hold:
\begin{equation}\label{PC-T1}
    \|v(s)\|_{L^{\beta+2}}^{\beta+2}=(1+t)^{\frac{\beta d}{2}}\|u(t)\|_{L^{\beta+2}}^{\beta+2}, \,\,\, for \,\,\, \beta\geq 0,
\end{equation}
\begin{equation}\label{PC-T2}
    \|\nabla v(s)\|_{L^{2}}^{2}=\frac{1}{4}\|(x+2i(1+t)\nabla)u(t)\|_{L^{2}}^{2},
\end{equation}
\begin{equation}\label{PC-T3}
    \|\nabla u(t)\|_{L^{2}}^{2}=\frac{1}{4}\|(y-2i(1-s)\nabla)v(s)\|_{L^{2}}^{2}.
\end{equation}
Hence if we set
\[E_{1}(s)=\frac{1}{2}\|\nabla v(s)\|_{L^{2}}^{2}-(1-s)^{\frac{\alpha d-4}{2}}\frac{\lambda}{\alpha+2}\|v(s)\|_{L^{\alpha+2}}^{\alpha+2},\]
\[E_{2}(s)=(1-s)^{\frac{4-\alpha d}{2}}\frac{1}{2}\|\nabla v(s)\|_{L^{2}}^{2}-\frac{\lambda}{\alpha+2}\|v(s)\|_{L^{\alpha+2}}^{\alpha+2},\]
it follows from the the pseudo-conformal conservation law for (\ref{eq1}) that
\begin{equation}\label{E1}
    \frac{d}{ds}E_{1}(s)=-(1-s)^{\frac{\alpha d-6}{2}}\frac{4-\alpha d}{2}\frac{\lambda}{\alpha+2}\|v(s)\|_{L^{\alpha+2}}^{\alpha+2},
\end{equation}
\begin{equation}\label{E2}
    \frac{d}{ds}E_{2}(s)=(1-s)^{\frac{2-\alpha d}{2}}\frac{\alpha d-4}{4}\|\nabla v(s)\|_{L^{2}}^{2}.
\end{equation}
By applying the fixed point theorem in an appropriate space to the following equivalent integral equation for the nonautonomous equation (\ref{nonautonomous})
\begin{equation}\label{nonauto Integral Eq}
    v(s)=e^{is\Delta}v_{0}+i\lambda\int_{0}^{s}e^{i(s-\tau)\Delta}(1-\tau)^{\frac{\alpha d-4}{2}}|v(\tau)|^{\alpha}v(\tau)d\tau,
\end{equation}
Cazenave and Weissler obtained the following local well-posedness result for (\ref{nonautonomous}) in \cite{C1}(see \cite{C2} or \cite{C1}, Theorem 3.4 for the proof).
\begin{thm}\label{local existence}
Assume that $\lambda\in\mathbb{R}$,
\begin{equation}\label{low-energy-scatter-index}
    \frac{4}{d+2}<\alpha<\frac{4}{d-2} \,\,\, (2<\alpha<\infty, \,\,\, if \,\,\,d=1).
\end{equation}
It follows that for every $s_{0}\in\mathbb{R}$ and $\psi\in\Sigma$, there exist $S_{m}(s_{0},\psi)<s_{0}<S_{M}(s_{0},\psi)$ and a unique, maximal solution $v\in C((S_{m},S_{M}),\Sigma)$ of equation (\ref{nonautonomous}). The solution $v$ is maximal in the sense that if $S_{M}<\infty$ (respectively, $S_{m}>-\infty$), then $\|v(s)\|_{H^{1}}\rightarrow\infty$ as $s\uparrow S_{M}$ (respectively, $s\downarrow S_{m}$). In addition, if $S_{M}=1$, then $\liminf_{s\uparrow1}\{(1-s)^{\delta}\|v(s)\|_{H^{1}}\}>0$ with $\delta=\frac{d+2}{4}-\frac{1}{\alpha}$ if $d\geq3$, $\delta<1-\frac{1}{\alpha}$ if $d=2$, and $\delta=\frac{1}{2}-\frac{1}{\alpha}$ if $d=1$.
\end{thm}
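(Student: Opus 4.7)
The plan is a standard Kato-type Banach fixed point argument on the integral equation (\ref{nonauto Integral Eq}), followed by a rescaling argument to extract the blow-up rate at $s=1$. Since $\alpha<\frac{4}{d-2}$ the nonlinearity is $H^{1}$-subcritical, and on any compact subinterval of $\mathbb{R}\setminus\{1\}$ the coefficient $(1-s)^{(\alpha d-4)/2}$ is bounded. I would fix the admissible pair $(q,\alpha+2)$ (with a dimensionally appropriate replacement in $d=1,2$), set $I_{S}=[s_{0}-S,s_{0}+S]$, and work in the closed ball in $X_{S}=C(I_{S},\Sigma)\cap L^{q}(I_{S},W^{1,\alpha+2})$ of radius comparable to $\|\psi\|_{\Sigma}$. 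Strichartz estimates combined with H\"older's inequality in time, which absorbs the bounded coefficient into a time factor $S^{\theta}$, close the contraction once $S$ is small enough in terms of $\|\psi\|_{H^{1}}$ and $\mathrm{dist}(I_{S},\{1\})$.

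To propagate membership in $\Sigma$, I would treat the gradient part by commuting $\nabla$ through $e^{is\Delta}$ and applying the $W^{1,\alpha+2}$-chain rule to $|v|^{\alpha}v$, and the weight part through the commutator identity $[y,e^{is\Delta}]=2is\nabla e^{is\Delta}$ together with $y(|v|^{\alpha}v)=|v|^{\alpha}(yv)$. The resulting inequality for $\|yv\|_{L^{\infty}(I_{S},L^{2})}$ is linear in $\|yv\|_{L^{q}(I_{S},L^{\alpha+2})}$ once the $\nabla$-piece has been controlled, so it closes by a Gronwall-type bootstrap. Uniqueness follows from the same Strichartz estimates applied to the difference of two solutions, and the maximal interval $(S_{m},S_{M})\ni s_{0}$ is obtained by iteration. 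The blow-up alternative is then automatic: the local existence time depends only on $\|v(s_{0})\|_{H^{1}}$ and on $\mathrm{dist}(I_{S},\{1\})$, with the $L^{2}(|y|^{2})$-part entering only linearly, so if $S_{M}<\infty$ but $\liminf_{s\uparrow S_{M}}\|v(s)\|_{H^{1}}<\infty$ one could restart past $S_{M}$.

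The main obstacle is the sharp lower bound on $\|v(s)\|_{H^{1}}$ when $S_{M}=1$, because in the mass-subcritical range $\alpha<\frac{4}{d}$ the coefficient is singular at $s=1$ and the naive lifespan estimate $S=S(\|\psi\|_{H^{1}},\mathrm{dist}(I_{S},\{1\}))$ must be sharpened. I would argue by contradiction: suppose $s_{n}\uparrow 1$ with $(1-s_{n})^{\delta}\|v(s_{n})\|_{H^{1}}\to 0$ and restart the fixed point on $[s_{n},s_{n}+\sigma_{n}]$ with data $\psi_{n}=v(s_{n})$. The relevant nonlinear Strichartz bound takes the schematic form
\[\|v\|_{L^{q}(W^{1,\alpha+2})}\le C\|\psi_{n}\|_{H^{1}}+C\,\|(1-\tau)^{(\alpha d-4)/2}\|_{L^{p}[s_{n},s_{n}+\sigma_{n}]}\,\|v\|_{L^{q}(W^{1,\alpha+2})}^{\alpha+1}\]
for a H\"older exponent $p$ fixed by the admissible pair. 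Choosing $\sigma_{n}\sim(1-s_{n})$ converts the $L^{p}$-factor into a power of $(1-s_{n})$, and tracking exponents shows that the map is contractive exactly when $\|\psi_{n}\|_{H^{1}}=o((1-s_{n})^{-\delta})$ with the $\delta$ announced in the theorem; this is the value one also reads off from the self-similar ansatz $v(s,y)=(1-s)^{-a}V(y/(1-s)^{1/2})$, $a=\frac{d}{2}-\frac{1}{\alpha}$. The solution then extends past $s=1$, contradicting $S_{M}=1$. The adjustments for $d=1,2$ reflect the different admissible pair $(q,r)$ forced by the geometric constraint $r<\infty$ or $r\le\infty$, which changes $p$ and hence the value of $\delta$. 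The symmetric statement about $S_{m}$ follows by time reversal through Remark \ref{rem1}.
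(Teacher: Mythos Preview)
The paper does not prove this theorem; it quotes it from Cazenave--Weissler \cite{C1} (Theorem 3.4 there) and \cite{C2}. Your outline of local existence, uniqueness, and the blow-up alternative via a Kato fixed point is the standard route and is correct in spirit.

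The gap is in the lower bound at $S_M=1$. With the pair $(q,\alpha+2)$, $q=\frac{4(\alpha+2)}{d\alpha}$, placing the $\alpha$ extra copies of $v$ in $L^\infty_t L^{\alpha+2}_x$ via $H^1\hookrightarrow L^{\alpha+2}$, the H\"older bookkeeping on $[s_n,s_n+\tfrac12(1-s_n)]$ gives the contraction condition $\|v(s_n)\|_{H^1}^{\alpha}(1-s_n)^{(\alpha d-4)/2+1/m}\ll1$ with $\tfrac1m=1-\tfrac2q$, and this yields only
\[
\delta'=\frac{d(\alpha+1)}{2(\alpha+2)}-\frac{1}{\alpha}\,<\,\frac{d+2}{4}-\frac{1}{\alpha}=\delta\qquad(d\ge3).
\]
Worse, $\delta'\le0$ on the whole range $\tfrac{4}{d+2}<\alpha\le\alpha(d)$ (the inequality $\delta'>0$ is precisely $d\alpha^2+(d-2)\alpha-4>0$), so your scheme produces no nontrivial lower bound there. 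Your self-similar heuristic correctly predicts $\delta$, but the fixed-point computation with this pair does not match it; ``tracking exponents'' would have revealed the discrepancy.

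The remedy is to push the $\alpha$ extra factors to the endpoint Sobolev exponent. For $d\ge3$ bound them in $L^\infty_t L^{2d/(d-2)}_x$ and take the admissible pair determined by $1-\tfrac{2}{r}=\tfrac{\alpha(d-2)}{2d}$, i.e.\ $r=\frac{4d}{2d-\alpha(d-2)}$, $q=\frac{8}{\alpha(d-2)}$. Then $\tfrac1m=\tfrac{4-(d-2)\alpha}{4}$ and $\tfrac{\alpha d-4}{2}+\tfrac1m=\tfrac{\alpha(d+2)-4}{4}$, which delivers exactly $\delta=\tfrac{d+2}{4}-\tfrac1\alpha$. For $d=1$ the analogous choice $L^\infty_x$ together with the dual Strichartz pair $(1,2)$ gives $\delta=\tfrac12-\tfrac1\alpha$; for $d=2$ one takes the Lebesgue exponent for the extra factors arbitrarily large but finite, getting any $\delta<1-\tfrac1\alpha$, which is why the statement has a strict inequality in that dimension.
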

The following useful observation indicates the inseparable relationship between the asymptotic behavior of $u(t)$ as $t\rightarrow+\infty$ and $v(s)$ as $s\rightarrow 1$(refer to \cite{C1}, Proposition 3.14 for the proof).
\begin{prop}\label{equivalent}
Let $u\in C([0,\infty),\Sigma)$ be a solution of equation (\ref{eq1}) and let $v\in C([0,1),\Sigma)$ be the corresponding solution of (\ref{nonautonomous}) defined by (\ref{PC-Trans}). It follows that $e^{-it\Delta}u(t)$ has a strong limit in $\Sigma$ as $t\rightarrow\infty$ if and only if $v(s)$ has a strong limit in $\Sigma$ as $s\uparrow1$, in which case
\begin{equation}\label{equivalent-scattering}
    \lim_{t\rightarrow\infty}e^{-it\Delta}u(t)=e^{i\frac{|y|^{2}}{4}}e^{-i\Delta}v(1) \,\,\,\,\,\, in \,\, \,\Sigma.
\end{equation}
\end{prop}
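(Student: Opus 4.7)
The plan is to reduce both the equivalence and the explicit formula (\ref{equivalent-scattering}) to a single exact pointwise identity relating $e^{-it\Delta}u(t)$ and $e^{-is\Delta}v(s)$, after which transferring the limits amounts to standard soft analysis: uniform boundedness and strong continuity of the Schr\"odinger group on $\Sigma$, combined with the fact that the Gaussian multiplier $e^{i|\cdot|^{2}/4}$ is an isomorphism of $\Sigma$.

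The key step is to establish, for every $t\in[0,\infty)$ and $s = t/(1+t)$, the exact identity
\[
e^{-it\Delta}u(t)(x) \;=\; e^{i|x|^{2}/4}\,(e^{-is\Delta}v(s))(x).
\]
I would derive this by substituting $u(t,z) = (1+t)^{-d/2}e^{i|z|^{2}/(4(1+t))}v(s,z/(1+t))$ from (\ref{PC-Trans}) into the Schr\"odinger kernel $e^{-it\Delta}f(x) = (-4\pi it)^{-d/2}\int e^{-i|x-z|^{2}/(4t)}f(z)\,dz$, changing variables $y = z/(1+t)$ and expanding $|x-(1+t)y|^{2}$. Using $(1+t)/(4t)=1/(4s)$ and $(1+t)/(2t)=1/(2s)$, the resulting quadratic form in $y$ is exactly the kernel of $e^{-is\Delta}$ in the $y$-variable; the residual $x$-phase collapses to $e^{i|x|^{2}/4}$ via $1/(4s)-1/(4t)=1/4$, and the constant prefactor becomes $1$ via $s(1+t)/t=1$. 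This algebraic bookkeeping is where the only genuine work lies.

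Two standard facts on $\Sigma$ then close the argument. First, $Mf := e^{i|x|^{2}/4}f$ is a bi-continuous isomorphism of $\Sigma$, since it is an $L^{2}$-isometry that preserves the $|x|^{2}$-weight and $\nabla(Mf) = \frac{ix}{2}Mf + M\nabla f$ is controlled by $\|xf\|_{L^{2}}+\|\nabla f\|_{L^{2}}$. Second, the family $\{e^{-is\Delta}\}_{s\in[0,1]}$ is uniformly bounded and strongly continuous on $\Sigma$: by (\ref{P3}), $xe^{-is\Delta}f = e^{-is\Delta}(x+2is\nabla)f$ yields the uniform bound, while the decomposition
\[
x(e^{-is\Delta}-e^{-i\Delta})f = e^{-is\Delta}\bigl(2i(s-1)\nabla f\bigr)+\bigl(e^{-is\Delta}-e^{-i\Delta}\bigr)(x+2i\nabla)f,
\]
combined with the strong continuity of $e^{-is\Delta}$ on $L^{2}$ and on $H^{1}$, yields strong $\Sigma$-continuity at $s=1$.

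With these in hand, if $v(s)\to v(1)$ in $\Sigma$ as $s\uparrow 1$, I would decompose $e^{-is\Delta}v(s)-e^{-i\Delta}v(1) = e^{-is\Delta}(v(s)-v(1)) + (e^{-is\Delta}-e^{-i\Delta})v(1)$; both terms tend to $0$ in $\Sigma$, and applying $M$ together with the identity of Step 1 gives (\ref{equivalent-scattering}). For the converse, if $e^{-it\Delta}u(t)\to\phi$ in $\Sigma$ as $t\to\infty$, the identity gives $e^{-is\Delta}v(s)\to M^{-1}\phi$ in $\Sigma$; applying the analogous uniform-boundedness and strong-continuity statements for the forward group $e^{is\Delta}$ to $v(s)=e^{is\Delta}(e^{-is\Delta}v(s))$ shows $v(s)$ converges in $\Sigma$ to $e^{i\Delta}M^{-1}\phi$, which we declare to be $v(1)$, and (\ref{equivalent-scattering}) is then immediate.
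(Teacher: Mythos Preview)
Your argument is correct. The exact identity $e^{-it\Delta}u(t)=e^{i|x|^{2}/4}\,e^{-is\Delta}v(s)$ does hold (your phase bookkeeping checks out: $\tfrac{1}{s}-\tfrac{1}{t}=1$ gives the residual $e^{i|x|^{2}/4}$, and $s(1+t)=t$ matches the prefactors), and from there the uniform boundedness and strong $\Sigma$-continuity of $e^{\pm is\Delta}$ on $[0,1]$ together with the fact that $e^{i|x|^{2}/4}$ is a $\Sigma$-isomorphism finish both directions cleanly.

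Note, however, that the paper does not supply its own proof of this proposition: it simply refers the reader to Cazenave--Weissler \cite{C1}, Proposition~3.14. So there is no in-paper argument to compare against. Your route via the explicit kernel factorization is in fact the standard one used in that reference (and reproduced in \cite{C2}); the only cosmetic difference is that the literature often packages the identity via the decomposition $e^{it\Delta}=M_{t}D_{t}\mathcal{F}M_{t}$ of the Schr\"odinger group into Gaussian multipliers, dilation, and Fourier transform, rather than computing the convolution kernel directly as you do. Either way one lands on the same one-line identity, so your proposal is essentially the canonical proof.
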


\section{scattering for small initial data}
In this section we consider the scattering theory in $\Sigma$ for small initial values $u_{0}$ under the assumption that $\lambda\in\mathbb{R}\setminus\{0\}$, $\alpha\leq\frac{4}{d+2}$. Note that if $\lambda>0$, $\alpha<\frac{4}{d+2}$, there are initial values $u_{0}\in\Sigma$ with arbitrary small norm $\|u_{0}\|_{\Sigma}$ that do not have a scattering state, even in the sense of $L^{2}(\mathbb{R}^{d})$(see \cite{C1}). We obtain the following result.
\begin{thm}\label{small data}
Assume $d\geq3$, $\lambda\in\mathbb{R}\setminus\{0\}$, $\alpha_{d}<\alpha<\frac{4}{d-2}$, where $\alpha_{d}=\frac{-d+\sqrt{d^{2}+16d}}{2d}$. Then there exists $\varepsilon_{0}>0$ with the following property. Let $u_{0}\in H^{2} \cap \mathcal{F}(H^{2}) \subset \Sigma$, $v_{0}=e^{-i\frac{|x|^{2}}{4}}u_{0}$ and let $u$ be the corresponding maximal solution of (\ref{eq1}). If $\|v_{0}\|_{H^{2}}\leq\varepsilon_{0}$(assuming further $\|u_{0}\|_{H^{1}}\leq\varepsilon_{0}$ when $\lambda>0$, $\alpha\geq \frac{4}{d}$), then the solution $u$ is global and scatters as $t\rightarrow \pm\infty$.
\end{thm}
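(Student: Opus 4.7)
The plan is to apply the pseudo-conformal change of variables (\ref{PC-Trans}) and study instead the nonautonomous Cauchy problem (\ref{nonautonomous}) for $v$ on $[0,1)$, whose initial datum $v_{0}=e^{-i|x|^{2}/4}u_{0}$ is small in $H^{2}$ by hypothesis. By Proposition \ref{equivalent}, the existence of a scattering state $u^{+}\in\Sigma$ is equivalent to $v(s)$ having a strong limit in $\Sigma$ as $s\uparrow 1$, so the task reduces to continuing $v$ up to $s=1$ in $\Sigma$. Scattering at $-\infty$ then follows by the symmetry noted in Remark \ref{rem1}.

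\textbf{Fixed point in $H^{2}$ Strichartz spaces.} I would run a contraction on the Duhamel formula (\ref{nonauto Integral Eq}) in a space of the form $X=L^{\infty}([0,1],H^{2})\cap L^{q}([0,1],W^{2,r})$ for an appropriate admissible pair $(q,r)$, with ball radius proportional to $\|v_{0}\|_{H^{2}}$. Strichartz (Lemma \ref{Lorentz3}) applied to $\langle\nabla\rangle^{2}v$ controls the linear piece $e^{is\Delta}v_{0}$ in $X$. For the nonlinear term I would use a fractional Leibniz/chain-rule estimate $\||v|^{\alpha}v\|_{W^{2,\tilde r'}}\leq C\|v\|_{L^{p}}^{\alpha}\|v\|_{W^{2,\rho}}$ together with the Sobolev embedding $W^{2,r}\hookrightarrow L^{p}$, and then peel off the singular weight by H\"{o}lder in time,
\[\bigl\|(1-\tau)^{(\alpha d-4)/2}|v|^{\alpha}v\bigr\|_{L^{\tilde q'}([0,1],W^{2,\tilde r'})}\leq C\bigl\|(1-\tau)^{(\alpha d-4)/2}\bigr\|_{L^{a}([0,1])}\,\|v\|_{X}^{\alpha+1},\]
where $a$ is fixed by the H\"{o}lder balance and $(\tilde q,\tilde r)$ is an admissible pair dictated by duality. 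The integrability requirement $a(4-\alpha d)<2$ of the singular factor, together with the admissibility relation $2/q=d(1/2-1/r)$ and the Sobolev compatibility of $p$, is what collapses precisely to the algebraic inequality $d\alpha^{2}+d\alpha>4$, i.e.\ $\alpha>\alpha_{d}$. With $\|v_{0}\|_{H^{2}}$ sufficiently small this produces a unique fixed point $v\in C([0,1],H^{2})$ with a strong $H^{2}$ limit at $s=1$.

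\textbf{Upgrade to $\Sigma$ and global existence of $u$.} The hypothesis $u_{0}\in H^{2}\cap\mathcal{F}(H^{2})$ gives in particular $yv_{0}\in L^{2}$. To propagate this, I would commute multiplication by $y$ through the equation using $[y,\,i\partial_{s}+\Delta_{y}]=-2i\nabla_{y}$; the function $yv$ then solves an NLS with the same nonlinearity $\lambda(1-s)^{(\alpha d-4)/2}|v|^{\alpha}(yv)$ plus a linear forcing $-2i\nabla v$, and the standard $L^{2}$ energy identity yields $\partial_{s}\|yv\|_{L^{2}}\leq 2\|\nabla v\|_{L^{2}}$, which closes $\|yv\|_{L^{\infty}([0,1],L^{2})}$ against the $H^{2}$ bound of the previous step. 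Hence $v\in C([0,1],\Sigma)$ with a $\Sigma$-limit at $s=1$, and Proposition \ref{equivalent} supplies $u^{+}\in\Sigma$ ($u^{-}$ is obtained symmetrically). To ensure $u$ itself is global so that Proposition \ref{equivalent} applies, the hypothesis already suffices for $\lambda<0$ and for $\lambda>0$, $\alpha<4/d$; in the remaining focusing range $\lambda>0$, $\alpha\geq 4/d$, the extra smallness of $\|u_{0}\|_{H^{1}}$ invokes the standard small-data $H^{1}$ global theory for (\ref{eq1}).

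\textbf{Main obstacle.} The technical heart is the nonlinear estimate above: tuning the admissible pair $(q,r)$, the Sobolev exponent $p$, and the H\"{o}lder exponent $a$ so that the integrability threshold of $(1-\tau)^{(\alpha d-4)/2}$ collapses exactly to the root condition $d\alpha^{2}+d\alpha-4>0$. Handling the non-smoothness of $|v|^{\alpha}v$ for non-integer $\alpha$ via a fractional chain rule in the $H^{2}$ framework, especially in the dimensions $d\geq 4$ where $H^{2}\not\hookrightarrow L^{\infty}$, is the principal technical hurdle.
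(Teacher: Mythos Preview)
Your overall strategy---reduce to the pseudo-conformal problem for $v$ and invoke Proposition \ref{equivalent}---matches the paper, but the execution diverges at the key step and your version carries a genuine obstacle that the paper avoids.

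The paper does \emph{not} run a contraction in $H^{2}$-based Strichartz spaces. Instead it uses the $v$-equation only to obtain a pointwise-in-$s$ bound on $\|\nabla v(s)\|_{L^{\rho}}$ (equivalently $\|v(s)\|_{L^{\rho^{\ast}}}$) by applying the dispersive estimate to (\ref{nonauto Integral Eq}) and a continuation argument; the $H^{2}$ hypothesis enters only through the embedding $H^{2}\hookrightarrow W^{1,\rho}$ and through continuity of $s\mapsto\|\nabla v(s)\|_{L^{\rho}}$. The resulting uniform bound is then transformed back via (\ref{PC-T1}) into a decay rate $\|u(t)\|_{L^{\rho^{\ast}}}\leq C(1+t)^{-d(1/2-1/\rho^{\ast})}$, after which the scattering is finished entirely on the $u$-side by standard $H^{1}$-level Strichartz estimates for $u$ and $P_{t}u$. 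In particular, only \emph{one} derivative of $|v|^{\alpha}v$ (and of $|u|^{\alpha}u$) is ever taken, so the chain rule is the elementary pointwise one, valid for all $\alpha>0$.

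By contrast, your $W^{2,\tilde r'}$ estimate on $|v|^{\alpha}v$ requires two derivatives of the nonlinearity. For $\alpha<1$---which covers the entire admissible range once $d\geq 6$, since then $\tfrac{4}{d-2}\leq 1$---the map $z\mapsto|z|^{\alpha}z$ is not $C^{2}$ and the clean product estimate $\||v|^{\alpha}v\|_{W^{2,\tilde r'}}\lesssim\|v\|_{L^{p}}^{\alpha}\|v\|_{W^{2,\rho}}$ generally fails (the formal second derivative contains $|v|^{\alpha-2}$-type factors). This is not merely a technical inconvenience but a real gap in the stated range; the ``fractional chain rule'' you invoke is designed for fractional-order Sobolev norms, not for integer order $2$ with sub-unit H\"older nonlinearities. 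The paper's route buys exactly the ability to cover all $\alpha>\alpha_{d}$ uniformly by never leaving the $H^{1}$ level in the nonlinear estimates. Your $yv$-propagation step is also replaced in the paper by working with $P_{t}u$ directly on the $u$-side, which dovetails with the Strichartz argument there.
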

\begin{proof}
For the prove of the scattering properties, we deal with only the positive time $t\rightarrow +\infty$, since $t\rightarrow-\infty$ can be treated in the same way. Under the assumptions of Theorem \ref{small data}, it is well known that there exists $\varepsilon_{0}>0$ such that the solution $u$ of Cauchy problem (\ref{eq1}) is global and bounded in $H^{1}(\mathbb{R}^{d})$, moreover, $u\in C(\mathbb{R},\Sigma)$. Thus we have $v(s,y)$ (the Pseudo-conformal Transformation of $u(t,x)$, see Section 2 for a review) defined by (\ref{PC-Trans}) satisfies the following nonautonomous integral equation
 \begin{equation}\label{nonauto Integral Eq1}
    v(s)=e^{is\Delta}v_{0}+i\lambda\int_{0}^{s}e^{i(s-\tau)\Delta}(1-\tau)^{\frac{\alpha d-4}{2}}|v(\tau)|^{\alpha}v(\tau)d\tau
\end{equation}
on the interval $(0,1)$, and $v\in C([0,1),\Sigma)$. \\
Let $(\gamma,\rho)$ be the admissible pair defined by
\begin{equation}\label{pair1}
    \gamma=\frac{4(\alpha+2)}{\alpha(d-2)}, \,\,\,\,\,\, \rho=\frac{d(\alpha+2)}{d+\alpha},
\end{equation}
and index $\rho^{\ast}=\frac{d(\alpha+2)}{d-2}$. One easily verifies that $\frac{1}{\rho'}=\frac{\alpha}{\rho^{\ast}}+\frac{1}{\rho}$ and $L^{\rho^{\ast}}(\mathbb{R}^{d})\hookrightarrow \dot{W}^{1,\rho}(\mathbb{R}^{d})$. Therefore, by applying the dispersive estimates(see \cite{C2}) and H\"{o}lder's inequality to the integral equation (\ref{nonauto Integral Eq1}), we have
\begin{equation*}
    \begin{array}{ll}
    \|\nabla v(s)\|_{L^{\rho}} \leq \|\nabla(e^{is\Delta}v_{0})\|_{L^{\rho}}+C\int_{0}^{s}(1-\tau)^{\frac{\alpha d-4}{2}}(s-\tau)^{-\frac{2}{\gamma}}\|v(\tau)\|_{L^{\rho^{\ast}}}^{\alpha}\|\nabla v(\tau)\|_{L^{\rho}}d\tau \\
    \,\,\,\,\,\,\,\,\,\,\,\,\,\,\,\,\,\,\,\,\,\,\,\,\,\,\,\,\, \leq C\|v_{0}\|_{H^{2}}+C\int_{0}^{s}(1-\tau)^{\frac{\alpha d-4}{2}}(s-\tau)^{-\frac{2}{\gamma}}\|\nabla v(\tau)\|_{L^{\rho}}^{\alpha+1}d\tau.
    \end{array}
\end{equation*}
Note that $\alpha>\alpha_{d}$, we have $\frac{4-\alpha d}{2}+\frac{2}{\gamma}<1$, thus we can deduce from H\"{o}lder's inequality that
\begin{equation}
    \int_{0}^{s}(1-\tau)^{\frac{\alpha d-4}{2}}(s-\tau)^{-\frac{2}{\gamma}}d\tau\leq C(\alpha,d).
\end{equation}
Set $\Theta(s)=\sup_{\tau\in[0,s]}\|\nabla v(\tau)\|_{L^{\rho}}$, for $0<s<1$. Then we can deduce from the above two estimates immediately that
\begin{equation}\label{continuation argument1}
    \Theta(s)\leq C\|v_{0}\|_{H^{2}}+C\Theta(s)^{\alpha+1} \,\,\, for \,\,\, all \,\,\, 0<s<1.
\end{equation}
Note that $u_{0}\in H^{2}\cap\mathcal{F}(H^{2})$, one easily verifies that $v_{0}\in H^{2}$ and $v\in C([0,1),H^{2})$, thus we have $\Theta\in C([0,1))$ and
\begin{equation}\label{continuation argument2}
    \lim_{s\rightarrow 0}\Theta(s)=\|\nabla v_{0}\|_{L^{\rho}}\leq C\|v_{0}\|_{H^{2}}.
\end{equation}
Applying (\ref{continuation argument2}), we deduce easily that if $\|v_{0}\|_{H^{2}}\leq \varepsilon_{0}$ where $\varepsilon_{0}>0$ is sufficiently small so that $(2C\varepsilon_{0})^{\alpha+1}<\varepsilon_{0}$, then
\[\Theta(s)\leq 2C\|v_{0}\|_{H^{2}} \,\,\,\,\, for \,\,\, all \,\,\,\, 0<s<1.\]
Letting $s\uparrow 1$, we deduce in particular that
\begin{equation}\label{decay estimate1}
    \sup_{s\in [0,1)}\|v(s)\|_{L^{\rho^{\ast}}}\leq C\sup_{s\in [0,1)} \|\nabla v(s)\|_{L^{\rho}}<\infty.
\end{equation}
Therefore we deduce from identity (\ref{PC-T1}) the following decay estimate for $u(t,x)$:
\begin{equation}\label{decay estimate2}
    \|u(t)\|_{L^{\rho^{\ast}}}\leq C(1+t)^{-d(\frac{1}{2}-\frac{1}{\rho^{\ast}})} \,\,\,\, for \,\,\,\, all \,\,\, t\geq 0.
\end{equation}
Therefore, it follows from Strichartz's estimates that for every $t\geq T\geq 0$,
\begin{eqnarray}\label{Stri1}
\nonumber
  \|u\|_{L^{\gamma}((0,t),W^{1,\rho})}&\leq&C\|u_{0}\|_{H^{1}}+C(\int_{0}^{T}\|u(\tau)\|_{L^{\rho^{\ast}}}^{\frac{\alpha\gamma}{\gamma-2}}d\tau)^{\frac{\gamma-2}{\gamma}}
\|u\|_{L^{\gamma}((0,T),W^{1,\rho})} \\ &+&C(\int_{T}^{t}\|u(\tau)\|_{L^{\rho^{\ast}}}^{\frac{\alpha\gamma}{\gamma-2}}d\tau)^{\frac{\gamma-2}{\gamma}}
\|u\|_{L^{\gamma}((T,t),W^{1,\rho})}.
\end{eqnarray}
Using (\ref{decay estimate2}), we get
\[\|u(\tau)\|_{L^{\rho^{\ast}}}^{\frac{\alpha\gamma}{\gamma-2}}\leq C(1+\tau)^{-\frac{\alpha d \gamma-8}{2(\gamma-2)}}.\]
Note that since $\alpha>\alpha_{d}=\frac{-d+\sqrt{d^{2}+16d}}{2d}$, we have $\alpha d \gamma-8>2(\gamma-2)$. Therefore for $T$ large enough,
\begin{equation}\label{Stri2}
    C(\int_{T}^{t}\|u(\tau)\|_{L^{\rho^{\ast}}}^{\frac{\alpha\gamma}{\gamma-2}}d\tau)^{\frac{\gamma-2}{\gamma}}\leq \frac{1}{2}.
\end{equation}
On the other hand, $u\in L^{\infty}((0,T),H^{1}(\mathbb{R}^{d}))\cap L^{q}((0,T),W^{1,r}(\mathbb{R}^{d}))$. Therefore, it follows from (\ref{Stri1}) and (\ref{Stri2}) that
\[\|u\|_{L^{\gamma}((0,t),W^{1,\rho})}\leq C+\frac{1}{2}\|u\|_{L^{\gamma}((0,t),W^{1,\rho})}.\]
Letting $t\uparrow \infty$, we obtain
\[\|u\|_{L^{\gamma}((0,\infty),W^{1,\rho})}\leq 2C.\]
This also implies that $|u|^{\alpha}u\in L^{\gamma'}((0,\infty),W^{1,\rho'}(\mathbb{R}^{d}))$. Applying again Strichartz's estimates, one obtains the result for every admissible pair. Let $P_{t}u=(x+2it\nabla)u$, by applying Strichartz's estimates, we obtain
\begin{eqnarray}\label{Stri3}
\nonumber
  \|P_{t}u\|_{L^{\gamma}((0,t),L^{\rho})}&\leq&C\|xu_{0}\|_{L^{2}}+C(\int_{0}^{T}\|u(\tau)\|_{L^{\rho^{\ast}}}^{\frac{\alpha\gamma}{\gamma-2}}d\tau)
  ^{\frac{\gamma-2}{\gamma}}\|P_{t}u\|_{L^{\gamma}((0,T),L^{\rho})} \\ &+&C(\int_{T}^{t}\|u(\tau)\|_{L^{\rho^{\ast}}}^{\frac{\alpha\gamma}{\gamma-2}}d\tau)^{\frac{\gamma-2}{\gamma}}
\|P_{t}u\|_{L^{\gamma}((T,t),L^{\rho})}
\end{eqnarray}
for every $0\leq T\leq t$. Then one concludes similarly as above that
\[\|(x+2it\nabla)u\|_{L^{\gamma}((0,\infty),L^{\rho})}\leq 2C,\]
and $P(|u|^{\alpha}u)\in L^{\gamma'}((0,\infty),L^{\rho'}(\mathbb{R}^{d}))$. Therefore for $0<t<s$, by Strichartz's estimates, we have
\[\|e^{-it\Delta}u(t)-e^{-is\Delta}u(s)\|_{H^{1}}\leq C\||u|^{\alpha}u\|_{L^{\gamma'}((t,s),W^{1,\rho'})},\]
\[\|x(e^{-it\Delta}u(t)-e^{-is\Delta}u(s))\|_{L^{2}}\leq C\|(x+2i\tau\nabla)|u|^{\alpha}u\|_{L^{\gamma'}((t,s),L^{\rho'})}.\]
Thus, we get immediately
\[\|e^{-it\Delta}u(t)-e^{-is\Delta}u(s)\|_{H^{1}}\rightarrow0,\]
\[\|x(e^{-it\Delta}u(t)-e^{-is\Delta}u(s))\|_{L^{2}}\rightarrow0,\]
as $t,s\rightarrow\infty$. Hence there exists $u^{+}\in\Sigma$ such that $e^{-it\Delta}u(t)\rightarrow u^{+}$ in $\Sigma$ as $t\rightarrow\infty$.
\end{proof}
\begin{rem}
Since $\|v_{0}\|_{H^{2}}\leq C\|u_{0}\|_{H^{2}\cap\mathcal{F}(H^{2})}$, the smallness assumptions in Theorem \ref{small data} can be deduced from assuming that $\|u_{0}\|_{H^{2}\cap\mathcal{F}(H^{2})}$ is small. Note that $\max\{\frac{2}{d},\frac{4}{d+4}\}<\alpha_{d}<\frac{4}{d+2}$, we claim that for $\lambda>0$, there exists a constant $K>0$, such that for any $C_{0}\geq K$, there exists a initial data $u_{0}$ satisfying $\|u_{0}\|_{H^{2}\cap\mathcal{F}(H^{2})}=C_{0}$, which do not have a scattering state, even in the sense of $L^{2}(\mathbb{R}^{d})$. To see this, let $\varphi\in \Sigma$ be a nontrivial solution of the equation
\[-\Delta\varphi+\varphi=\lambda|\varphi|^{\alpha}\varphi.\]
Given $\omega>0$, set $\varphi_{\omega}(x)=\omega^{\frac{1}{\alpha}}\varphi(x\sqrt{\omega})$. It follows that $u_{\omega}(t,x)=e^{i\omega t}\varphi_{\omega}(x)$ satisfies (\ref{eq1}) and does not have any strong limit as $t\rightarrow\pm\infty$ in $L^{2}(\mathbb{R}^{d})$. One easily verifies that if $\alpha>\alpha_{d}$, then $\|\varphi_{\omega}\|_{H^{2}\cap\mathcal{F}(H^{2})}\rightarrow \infty$ as $\omega\rightarrow0$ and $\omega\rightarrow\infty$, which indicates that $K=\inf_{\omega\in(0,\infty)}\|\varphi_{\omega}\|_{H^{2}\cap\mathcal{F}(H^{2})}$ is attained. This proves our claim. Furthermore, if $\alpha<\frac{4}{d+4}$, since $\|\varphi_{\omega}\|_{H^{2}\cap\mathcal{F}(H^{2})}\rightarrow0$ as $\omega\rightarrow0$, there are arbitrary small initial values $u_{0}\in H^{2} \cap \mathcal{F}(H^{2})$ that do not have a scattering state.
\end{rem}

\section{Lower bound estimates for the singular solutions of nonautonomous equation}
In this section we will present a lower bound estimates for the $L^{\alpha+2}$ norm of the global solutions $u(t,x)$ to Cauchy problem (\ref{eq1}) that do not scatter as $t\rightarrow\pm\infty$ in the case $\lambda>0$. Therefore, we could deduce from these results that the global solution to (\ref{eq1}) with fast decay must scatter at $\pm\infty$, we will apply the following Propositions in this Section to the investigation on ``rapidly decaying solutions" in Section 5.
\begin{lem}\label{blowup0}
Assume $\lambda\in \mathbb{R}\setminus\{0\}$, $\frac{4}{d+2}<\alpha<\frac{4}{d-2}$($2<\alpha<\infty$, if $d=1$). Let $v_{0}\in\Sigma$ and $v\in C((S_{m},S_{M}),\Sigma)$ be the corresponding maximal solution of (\ref{nonautonomous}) given by Theorem \ref{local existence}. Then if $S_{M}(0,v_{0})=1$, we have the following lower estimate of the blowup rate
\begin{equation}\label{Lower bound0}
    \|\nabla v(s)\|_{L^{2}}^{2}\geq C(1-s)^{-2\theta}
\end{equation}
for some constant $C>0$ and all $s\in[0,1)$, where $\theta=\frac{d+2}{4}-\frac{1}{\alpha}$ if $d\geq3$, $\theta$ any positive number less than $1-\frac{1}{\alpha}$ if $d=2$, and $\theta=1-\frac{2}{\alpha}$ if $d=1$.
\end{lem}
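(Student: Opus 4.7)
\medskip

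\noindent\textbf{Proof plan.} The plan is to argue by contradiction using local well-posedness of (\ref{nonautonomous}) at time slices close to $s=1$. Suppose, contrary to the claim, that there exists a sequence $s_n\uparrow 1$ such that $(1-s_n)^{2\theta}\|\nabla v(s_n)\|_{L^{2}}^{2}\to 0$. I will show that the solution starting at time $s_n$ from the datum $v(s_n)$ can be continued strictly past $s=1$, which contradicts $S_M(0,v_0)=1$ by the maximality assertion of Theorem \ref{local existence}.

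For this, first I would write the integral version of (\ref{nonautonomous}) with initial time $s_n$,
\begin{equation*}
v(s)=e^{i(s-s_n)\Delta}v(s_n)+i\lambda\int_{s_n}^{s}e^{i(s-\tau)\Delta}(1-\tau)^{\frac{\alpha d-4}{2}}|v(\tau)|^{\alpha}v(\tau)\,d\tau,
\end{equation*}
and pick, for $d\geq 3$, the admissible pair $(\gamma,\rho)=\bigl(\tfrac{4(\alpha+2)}{\alpha(d-2)},\tfrac{d(\alpha+2)}{d+\alpha}\bigr)$ introduced in Section 3, together with the critical Sobolev embedding $L^{\rho^{*}}\hookrightarrow\dot W^{1,\rho}$ for $\rho^{*}=\tfrac{d(\alpha+2)}{d-2}$; suitable analogues are used for $d=1,2$. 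Applying Strichartz estimates in the resolution space $X([s_n,s]):=L^{\infty}H^{1}\cap L^{\gamma}W^{1,\rho}$, together with H\"older in space and then H\"older in time against the weight $(1-\tau)^{\frac{\alpha d-4}{2}}$ (which is integrable to a sufficiently high power near $\tau=1$ because the hypothesis $\alpha>\tfrac{4}{d+2}$ encodes exactly the subcriticality needed), the Duhamel term is dominated by $C(1-s_n)^{\alpha\theta}\|v\|_{X([s_n,s])}^{\alpha+1}$, where the exponent $\alpha\theta$ arises from the combination $1-\tfrac{\alpha+1}{\gamma}$ modulated by the singular/vanishing factor. The resulting estimate reads
\begin{equation*}
\|v\|_{X([s_n,s])}\leq C\|v(s_n)\|_{H^{1}}+C(1-s_n)^{\alpha\theta}\|v\|_{X([s_n,s])}^{\alpha+1}.
\end{equation*}

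Next, since mass is conserved along (\ref{nonautonomous}) (multiply by $\bar v$ and take the imaginary part), $\|v(s_n)\|_{L^{2}}=\|v_0\|_{L^{2}}$ is uniformly controlled, so $\|v(s_n)\|_{H^{1}}$ is governed by $\|\nabla v(s_n)\|_{L^{2}}\leq \eps_n(1-s_n)^{-\theta}$ with $\eps_n\to 0$. Inserting this into the previous inequality, the usual continuity/bootstrap argument closes on any interval $[s_n,s]\subset[s_n,1+\eta]$ for all $n$ large, producing a solution that extends past $s=1$; combined with the blow-up alternative of Theorem \ref{local existence}, this contradicts $S_M(0,v_0)=1$ and establishes the lower bound $\|\nabla v(s)\|_{L^{2}}^{2}\geq C(1-s)^{-2\theta}$ for every $s\in[0,1)$.

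The main obstacle is identifying the correct exponent $\theta$: everything hinges on matching the time exponent coming from Strichartz/H\"older against the weight $(1-\tau)^{\frac{\alpha d-4}{2}}$ with the scaling critical exponent $s_c=\tfrac{d}{2}-\tfrac{2}{\alpha}$ that governs the $H^{1}$-subcritical local theory. For $d\geq 3$ this calculation reproduces $\theta=\tfrac{d+2}{4}-\tfrac{1}{\alpha}$, mirroring the proof of the last assertion of Theorem \ref{local existence} but upgraded to a pointwise bound for every $s$ (rather than a $\liminf$) precisely because the contradiction argument applies at every time where the bound would fail. The $d=1$ case, where the claimed $\theta=1-\tfrac{2}{\alpha}$ is strictly larger than the $\delta=\tfrac{1}{2}-\tfrac{1}{\alpha}$ appearing in Theorem \ref{local existence}, is the most delicate point and will require exploiting the one-dimensional Strichartz pair $(4,\infty)$ together with the embedding $H^{1}(\mathbb{R})\hookrightarrow L^{\infty}(\mathbb{R})$, which allows a more favorable power of the time weight to be extracted and should yield the sharper exponent.
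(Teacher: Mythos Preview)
Your approach is essentially the contrapositive of what the paper does and relies on the same core estimate (Strichartz on the Duhamel term, H\"older in time against the weight $(1-\tau)^{\frac{\alpha d-4}{2}}$, and the blow-up alternative). For $d\geq 2$ the paper simply invokes Theorem~\ref{local existence}; your sequence argument is a legitimate way to upgrade the $\liminf$ statement there to a pointwise bound at every $s$. So in broad strokes you are on the right track.

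Two points, however, deserve correction.

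\textbf{(i) ``Extend past $s=1$'' is not available when $\alpha d<4$.} In that range the coefficient $(1-\tau)^{\frac{\alpha d-4}{2}}$ is singular at $\tau=1$ and the equation is not even defined for $\tau>1$; one cannot continue the solution into $[1,1+\eta]$. The contradiction must instead come from the blow-up alternative in Theorem~\ref{local existence}: your bootstrap, run on $[s_n,s]\subset[s_n,1)$, shows $\sup_{[s_n,1)}\|v\|_{H^1}<\infty$, which already contradicts $\|v(s)\|_{H^1}\to\infty$ as $s\uparrow S_M=1$. This is exactly how the paper argues (directly, not by contradiction): it fixes $s_0\in[0,1)$, uses blow-up to locate $s_1\in(s_0,1)$ where the $H^1$ norm has grown by a fixed factor, and reads off the lower bound on $\|v(s_0)\|_{H^1}$.

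\textbf{(ii) The $d=1$ improvement needs mass conservation, not just $H^1\hookrightarrow L^\infty$.} Your scheme with the nonlinear term bounded by $C(1-s_n)^{\alpha\theta}\|v\|_X^{\alpha+1}$ reproduces only $\theta=\delta=\tfrac12-\tfrac1\alpha$ in $d=1$, not the stronger $\theta=1-\tfrac2\alpha$. The sharper exponent comes from Gagliardo--Nirenberg $\|v\|_{L^\infty}\leq C\|v\|_{L^2}^{1/2}\|v\|_{H^1}^{1/2}$ together with conservation of $\|v(s)\|_{L^2}$, which lowers the nonlinear power from $\alpha+1$ to $1+\tfrac\alpha2$:
\[
\||v|^\alpha v\|_{H^1}\leq C\|v\|_{L^\infty}^\alpha\|v\|_{H^1}\leq C\|v_0\|_{L^2}^{\alpha/2}\|v\|_{H^1}^{1+\alpha/2}.
\]
Pairing this against $\|f\|_{L^1(s_0,1)}\sim(1-s_0)^{\frac{\alpha-2}{2}}$ (here one uses $\alpha>2$) gives $\|v(s_0)\|_{H^1}^{\alpha/2}(1-s_0)^{\frac{\alpha-2}{2}}\gtrsim 1$, i.e.\ $\theta=1-\tfrac2\alpha$. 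Your final paragraph gestures at the right tools but does not isolate this mechanism; without the $L^2$ saving you will not reach the claimed exponent.
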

\begin{proof}
By Theorem \ref{local existence}, Lemma \ref{Lower bound0} holds for $d\geq 2$, therefore we need only consider $d=1$.
Fix $s_{0}\in[0,1)$. Let $f(s)=\lambda(1-s)^{\frac{\alpha-4}{2}}$, for $0\leq s<1$. It follows from equation (\ref{nonautonomous}) and Strichartz's estimates that
\begin{equation}\label{eq3}
   \|v\|_{L^{\infty}((s_{0},s),H^{1})}\leq C\|v(s_{0})\|_{H^{1}}+C\|f|v|^{\alpha}v\|_{L^{1}((s_{0},s),H^{1})}
\end{equation}
for all $s\in(s_{0},1)$. On the other hand,
\begin{equation}\label{eq4}
    \||v|^{\alpha}v\|_{H^{1}}\leq C\|v\|_{L^{\infty}}^{\alpha}\|v\|_{H^{1}}
\end{equation}
and, by Gagliardo-Nirenberg's inequality,
\begin{equation}\label{eq5}
    \|v\|_{L^{\infty}}\leq C\|v\|_{H^{1}}^{1/2}\|v\|_{L^{2}}^{1/2}\leq C\|v\|_{H^{1}}^{1/2}.
\end{equation}
Therefore, we deduce from the above three inequalities (\ref{eq3}), (\ref{eq4}) and (\ref{eq5})that there exists a constant $K>0$ independent of $s_{0}$ and $s$ such that
\begin{equation}\label{eq6}
    \|v\|_{L^{\infty}((s_{0},s),H^{1})}\leq K\|v(s_{0})\|_{H^{1}}+K\|f\|_{L^{1}(s_{0},s)}\|v\|_{L^{\infty}((s_{0},s),H^{1})}^{1+\frac{\alpha}{2}}.
\end{equation}
Now, since by Theorem \ref{local existence} we have
 \[\limsup_{s\uparrow1}\|v(s)\|_{H^{1}}=\infty,\]
there exists $s_{1}\in(s_{0},1)$ such that $\|v\|_{L^{\infty}((s_{0},s_{1}),H^{1})}=(K+1)\|v(s_{0})\|_{H^{1}}$. Letting $s=s_{1}$ in (\ref{eq6}), we obtain
\[\|v(s_{0})\|_{H^{1}}\leq K((K+1)\|v(s_{0})\|_{H^{1}})^{1+\frac{\alpha}{2}}\|f\|_{L^{1}(s_{0},s_{1})},\]
hence
\[1\leq CK(K+1)^{1+\frac{\alpha}{2}}\|v(s_{0})\|_{H^{1}}^{\frac{\alpha}{2}}(1-s_{0})^{\frac{\alpha-2}{2}}.\]
Since $s_{0}\in[0,1)$ is arbitrary, we obtain for $d=1$ that
\begin{equation}\label{eq7}
    \|\nabla v(s)\|_{L^{2}}\geq C(1-s)^{-\frac{\alpha-2}{\alpha}}
\end{equation}
for some constant $C>0$ and all $s\in[0,1)$. This closes our proof.
\end{proof}
\begin{prop}\label{blowup1}
Assume $\lambda>0$, $\frac{4}{d+2}<\alpha\leq\frac{4}{d}$($2<\alpha\leq4$, if $d=1$). Let $u_{0}\in\Sigma$ and $u$ be the corresponding maximal solution of (\ref{eq1}), then if $u$ is positively(resp. negatively) global and doesn't scatter at $+\infty$(resp. $-\infty$), we have
\begin{equation}\label{Lower bound1}
    \|u(t)\|_{L^{\alpha+2}}\geq C(1+|t|)^{-\frac{2(1-\theta)}{\alpha+2}},
\end{equation}
for all $t\in(0,+\infty)$(resp. $t\in(-\infty,0)$), where $\theta$ is defined the same as in Lemma \ref{blowup0}. Moreover, for $d\geq 3$, $\alpha=\alpha(d)=\frac{2-d+\sqrt{d^{2}+12d+4}}{2d}$, we can derive a better lower estimate
\begin{equation}\label{Lower bound2}
    \|u(t)\|_{L^{\alpha+2}}\geq C(1+|t|)^{-\frac{\alpha d}{2(\alpha+2)}}[\log(1+|t|)]^{\frac{1}{\alpha+2}},
\end{equation}
for all $t\in(0,+\infty)$(resp. $t\in(-\infty,0)$).
\end{prop}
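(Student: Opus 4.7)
The plan is to work with the nonautonomous problem \eqref{nonautonomous} obtained from \eqref{eq1} via the pseudo-conformal transformation \eqref{PC-Trans}, to extract a quantitative blow-up of $\|\nabla v(s)\|_{L^{2}}$ as $s\uparrow 1$ from Lemma \ref{blowup0}, and then to read off a lower bound on $\|v(s)\|_{L^{\alpha+2}}$ from the monotonicity of $E_{2}$ coming from \eqref{E2}. Translating back via \eqref{PC-T1} together with $1-s=1/(1+t)$ yields \eqref{Lower bound1}; the logarithmic improvement in the critical case comes from feeding the resulting lower bound back into the integrated form of \eqref{E2}.

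\textbf{Reduction step.} Let $v\in C([0,1),\Sigma)$ be the pseudo-conformal transform of $u$ given by \eqref{PC-Trans}. If the maximal forward existence time satisfied $S_{M}(0,v_{0})>1$, Theorem \ref{local existence} would give $v\in C((S_{m},S_{M}),\Sigma)$ with $1\in (S_{m},S_{M})$, hence a strong $\Sigma$-limit of $v(s)$ as $s\uparrow 1$; by Proposition \ref{equivalent}, $u$ would then scatter at $+\infty$, contradicting the hypothesis. Therefore $S_{M}(0,v_{0})=1$, and Lemma \ref{blowup0} applies, giving $\|\nabla v(s)\|_{L^{2}}^{2}\geq C(1-s)^{-2\theta}$ for $s\in[0,1)$.

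\textbf{First estimate.} Since $\lambda>0$ and $\alpha\leq 4/d$, \eqref{E2} yields $E_{2}'(s)\leq 0$, hence $E_{2}(s)\leq E_{2}(0)$, which rearranges to
\[
\tfrac{\lambda}{\alpha+2}\|v(s)\|_{L^{\alpha+2}}^{\alpha+2}\geq(1-s)^{(4-\alpha d)/2}\tfrac{1}{2}\|\nabla v(s)\|_{L^{2}}^{2}-E_{2}(0).
\]
A short calculation gives $(4-\alpha d)/2-2\theta=-(d\alpha^{2}+(d-2)\alpha-4)/(2\alpha)$, nonzero for $\alpha\neq\alpha(d)$. Inserting the Lemma \ref{blowup0} bound and absorbing the constant $E_{2}(0)$ when $s$ is close to $1$ yields $\|v(s)\|_{L^{\alpha+2}}^{\alpha+2}\geq C(1-s)^{(4-\alpha d)/2-2\theta}$; translating through \eqref{PC-T1} with $1-s=1/(1+t)$ produces \eqref{Lower bound1} for large $t$. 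Continuity of $t\mapsto\|u(t)\|_{L^{\alpha+2}}$ together with its strict positivity (from mass conservation, so $u(t)\not\equiv 0$) extends the bound to all $t>0$; the case $t<0$ follows from Remark \ref{rem1}.

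\textbf{Logarithmic refinement at $\alpha=\alpha(d)$.} Here the preceding exponent vanishes and the first estimate only yields $\|v(s)\|_{L^{\alpha+2}}^{\alpha+2}\geq C$. The same algebra now gives $(2-\alpha d)/2-2\theta=-1$, so integrating \eqref{E2} and inserting the Lemma \ref{blowup0} bound produces
\[
E_{2}(0)-E_{2}(s)=\int_{0}^{s}(1-\tau)^{(2-\alpha d)/2}\tfrac{4-\alpha d}{4}\|\nabla v(\tau)\|_{L^{2}}^{2}\,d\tau\geq C\int_{s_{0}}^{s}\frac{d\tau}{1-\tau}=C\log\tfrac{1-s_{0}}{1-s}
\]
for $s$ close to $1$. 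Combining $-E_{2}(s)\geq C\log\frac{1}{1-s}$ with the definition of $E_{2}$ and dropping the non-negative gradient term yields $\|v(s)\|_{L^{\alpha+2}}^{\alpha+2}\geq C\log\frac{1}{1-s}$, which becomes \eqref{Lower bound2} after the change of variables. The main technical point is the algebraic coincidence that both $(4-\alpha d)/2-2\theta$ and $(2-\alpha d)/2-2\theta+1$ vanish precisely at $\alpha=\alpha(d)$: this is exactly what forces the first estimate to degenerate there while producing a borderline logarithm in its place.
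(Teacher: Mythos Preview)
Your reduction step and the logarithmic refinement at $\alpha=\alpha(d)$ are correct and essentially coincide with the paper's argument. The gap is in the \textbf{First estimate}: the monotonicity of $E_{2}$ does not do the job across the full range $\tfrac{4}{d+2}<\alpha\le\tfrac{4}{d}$. After inserting the Lemma~\ref{blowup0} bound into your displayed inequality you get
\[
\tfrac{\lambda}{\alpha+2}\|v(s)\|_{L^{\alpha+2}}^{\alpha+2}\ \ge\ \tfrac{C}{2}(1-s)^{(4-\alpha d)/2-2\theta}-E_{2}(0).
\]
For $d\ge3$ your exponent formula $-(d\alpha^{2}+(d-2)\alpha-4)/(2\alpha)$ is correct, but notice it is \emph{positive} exactly when $\alpha<\alpha(d)$, and $\alpha(d)$ lies strictly inside $(\tfrac{4}{d+2},\tfrac{4}{d})$. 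In that sub-range the first term on the right tends to $0$ as $s\uparrow1$, so whenever $E_{2}(0)>0$ the right-hand side becomes negative and ``absorbing the constant $E_{2}(0)$'' is impossible; you obtain no lower bound on $\|v(s)\|_{L^{\alpha+2}}$ whatsoever. (For $d=1,2$ the value of $\theta$ is different and your algebraic identity does not hold literally, but the same sign obstruction occurs.)

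The paper avoids this by using the monotonicity of $E_{1}$ rather than $E_{2}$: from $E_{1}(s)\le E_{1}(0)$ one has
\[
\tfrac12\|\nabla v(s)\|_{L^{2}}^{2}\ \le\ E_{1}(0)+(1-s)^{(\alpha d-4)/2}\tfrac{\lambda}{\alpha+2}\|v(s)\|_{L^{\alpha+2}}^{\alpha+2},
\]
so the constant $E_{1}(0)$ is compared directly with $\tfrac12\|\nabla v(s)\|_{L^{2}}^{2}$, which \emph{does} blow up by Lemma~\ref{blowup0}; the absorption is therefore legitimate for every $\alpha$ in the range, and only afterwards does one multiply through by $(1-s)^{(4-\alpha d)/2}$ to reach $\|v(s)\|_{L^{\alpha+2}}^{\alpha+2}\ge C(1-s)^{(4-\alpha d)/2-2\theta}$. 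Since $E_{2}(s)=(1-s)^{(4-\alpha d)/2}E_{1}(s)$, the point is simply that one must absorb the additive constant \emph{before} inserting the weight $(1-s)^{(4-\alpha d)/2}$, not after.
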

\begin{proof}
We will only deal with the positive time, since $(-\infty,0)$ can be treated similarly. By Proposition \ref{equivalent}, we deduce from the assumption positively global solution $u$ doesn't have scattering state $u^{+}$ at $+\infty$ that the nonautonomous equation (\ref{nonautonomous}) blows up at $s=1$(i.e., $S_{M}(0,v_{0})=1$). Hence by Theorem \ref{local existence}, one has
\begin{equation}\label{blowup}
   \limsup_{s\uparrow1}\|v(s)\|_{H^{1}}=\infty.
\end{equation}
Furthermore, by Lemma \ref{blowup0}, we get
\begin{equation}\label{blowup rate}
    \|\nabla v(s)\|_{L^{2}}^{2}\geq C(1-s)^{-2\theta}
\end{equation}
for some constant $C>0$ and all $s\in[0,1)$.
Note that $\lambda>0$, $\alpha\leq\frac{4}{d}$, we deduce from (\ref{E1}) that $\frac{d}{ds}E_{1}(s)\leq 0$, and so
\begin{equation}\label{eq8}
    \frac{1}{2}\|\nabla v(s)\|_{L^{2}}^{2}\leq E_{1}(0)+\frac{\lambda}{\alpha+2}(1-s)^{\frac{\alpha d-4}{2}}\|v(s)\|_{L^{\alpha+2}}^{\alpha+2}.
\end{equation}
Note that $\lambda>0$, $\frac{4}{d+2}<\alpha\leq\frac{4}{d}$($2<\alpha\leq4$, if $d=1$), from (\ref{blowup rate}) and (\ref{eq8}) we infer that
\[\|v(s)\|_{L^{\alpha+2}}^{\alpha+2}\geq C(1-s)^{-2\theta+\frac{4-\alpha d}{2}}\]
for some constant $C>0$ and all $s\in[0,1)$. Thus it follows from identity (\ref{Variables PC-Trans}) and (\ref{PC-T1}) that
\begin{equation}\label{eq9}
    \|u(t)\|_{L^{\alpha+2}}\geq C(1+t)^{-\frac{2(1-\theta)}{\alpha+2}} \,\,\,\, for \,\,\, all \,\,\, t\in[0,\infty).
\end{equation}
This indicates that the lower estimate (\ref{Lower bound1}) holds for positive time $(0,+\infty)$.
Furthermore, if $d\geq3$, $\alpha=\alpha(d)$, we deduce from (\ref{E2}) and (\ref{blowup rate}) that
\begin{eqnarray}
\nonumber &&\frac{\lambda}{\alpha+2}\|v(s)\|_{L^{\alpha+2}}^{\alpha+2}+E_{2}(0) \\
\nonumber &=&\frac{1}{2}(1-s)^{\frac{4-\alpha d}{2}}\|\nabla v(s)\|_{L^{2}}^{2}+\frac{4-\alpha d}{4}\int_{0}^{s}(1-\tau)^{\frac{2-\alpha d}{2}}\|\nabla v(\tau)\|_{L^{2}}^{2}d\tau \\
\nonumber &\geq& C+C\int_{0}^{s}(1-\tau)^{-1}d\tau \geq C+C\log(\frac{1}{1-s}),
\end{eqnarray}
from which we get immediately
\[\|v(s)\|_{L^{\alpha+2}}^{\alpha+2}\geq C\log(\frac{1}{1-s})\]
for some constant $C>0$ and all $s\in[0,1)$. Thus it follows from identity (\ref{Variables PC-Trans}) and (\ref{PC-T1}) that
\begin{equation}\label{eq10}
\|u(t)\|_{L^{\alpha+2}}\geq C(1+t)^{-\frac{\alpha d}{2(\alpha+2)}}[\log(1+t)]^{\frac{1}{\alpha+2}}
\end{equation}
for all $t\in[0,\infty)$. This closes the proof of Proposition \ref{blowup1} for positive time $(0,+\infty)$, and the arguments for $(-\infty,0)$ being similar.
\end{proof}
When $\alpha>\frac{4}{d}$, there is an lower estimate of integral form.
\begin{prop}\label{blowup2}
Assume $\lambda>0$, $\frac{4}{d}<\alpha<\frac{4}{d-2}$($\frac{4}{d}<\alpha<\infty$, if $d=1,2$). Let $u_{0}\in\Sigma$ and $u$ be the corresponding maximal solution of (\ref{eq1}), then if $u$ is positively(resp. negatively) global and doesn't scatter at $+\infty$(resp. $-\infty$), then there exists a $t_{0}>0$ such that
\begin{equation}\label{Lower bound3}
    |\int_{0}^{t}(1+|\tau|)\|u(\tau)\|_{L^{\alpha+2}}^{\alpha+2}d\tau|\geq C(1+|t|)^{2\theta},
\end{equation}
for all $t\in[t_{0},+\infty)$(resp. $t\in(-\infty,-t_{0}]$), where $\theta$ is defined the same as in Lemma \ref{blowup0}.
\end{prop}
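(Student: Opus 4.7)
By time-reversal symmetry it suffices to handle $t\to+\infty$. Let $v$ be the pseudo-conformal transform (\ref{PC-Trans}) of $u$. Since $u$ is positively global but does not scatter at $+\infty$, Proposition~\ref{equivalent} forces $S_{M}(0,v_{0})=1$, so Lemma~\ref{blowup0} gives the blow-up rate $\|\nabla v(s)\|_{L^{2}}^{2}\geq C(1-s)^{-2\theta}$ for all $s\in[0,1)$. The plan is to mimic the structure of the proof of Proposition~\ref{blowup1}, but exploiting the fact that the sign of $\alpha d-4$ is now positive.

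The first step is to rewrite the integral in (\ref{Lower bound3}) on the $s$-side. Using the substitution $\tau=r/(1-r)$ and identity (\ref{PC-T1}) I would compute
\begin{equation*}
\int_{0}^{t}(1+\tau)\|u(\tau)\|_{L^{\alpha+2}}^{\alpha+2}\,d\tau=\int_{0}^{s}(1-r)^{\frac{\alpha d-6}{2}}\|v(r)\|_{L^{\alpha+2}}^{\alpha+2}\,dr,
\end{equation*}
where $s=t/(1+t)$. Because $\alpha d-4>0$, integrating (\ref{E1}) from $0$ to $s$ turns this into
\begin{equation*}
\int_{0}^{t}(1+\tau)\|u(\tau)\|_{L^{\alpha+2}}^{\alpha+2}\,d\tau=\frac{2(\alpha+2)}{\lambda(\alpha d-4)}\bigl(E_{1}(s)-E_{1}(0)\bigr),
\end{equation*}
so the problem reduces to showing $E_{1}(s)\geq C(1-s)^{-2\theta}$ for $s$ sufficiently close to $1$.

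The key observation is that a direct comparison of the definitions of $E_{1}$ and $E_{2}$ yields the pointwise identity
\begin{equation*}
E_{1}(s)=(1-s)^{(\alpha d-4)/2}E_{2}(s).
\end{equation*}
Integrating (\ref{E2}) and inserting the lower bound on $\|\nabla v(\tau)\|_{L^{2}}^{2}$ produces
\begin{equation*}
E_{2}(s)\geq E_{2}(0)+\frac{C(\alpha d-4)}{4}\int_{0}^{s}(1-\tau)^{\frac{2-\alpha d}{2}-2\theta}\,d\tau.
\end{equation*}
Provided the exponent $\frac{2-\alpha d}{2}-2\theta$ is strictly less than $-1$, this integral diverges like $(1-s)^{(4-\alpha d)/2-2\theta}$ as $s\uparrow 1$; multiplying by the prefactor $(1-s)^{(\alpha d-4)/2}$ cancels the positive exponent and leaves exactly $(1-s)^{-2\theta}$, which is the desired lower bound on $E_{1}(s)$ once $E_{1}(0)$ and the vanishing remainder $(1-s)^{(\alpha d-4)/2}E_{2}(0)$ are absorbed into the constant by taking $t\geq t_{0}$ large.

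The main obstacle is verifying the exponent inequality $\theta>1-\frac{\alpha d}{4}$ in every dimension. For $d\geq 3$ with $\theta=\frac{d+2}{4}-\frac{1}{\alpha}$ this reduces algebraically to $d\alpha^{2}+(d-2)\alpha-4>0$, which is precisely the defining inequality for $\alpha>\alpha(d)$; since $\alpha(d)<4/d<\alpha$ by hypothesis, the inequality is strict. The cases $d=2$ and $d=1$ follow in the same way, using the freedom to take $\theta$ arbitrarily close to $1-1/\alpha$ together with $\alpha>2$ in $d=2$, and the explicit $\theta=1-2/\alpha$ together with $\alpha>4$ in $d=1$. Once this exponent bookkeeping is settled, the remaining algebra is routine.
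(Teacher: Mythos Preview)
Your argument is correct and follows essentially the same route as the paper. The paper defines $h(s)=\frac{\lambda}{\alpha+2}\int_{0}^{s}(1-\tau)^{(\alpha d-6)/2}\|v(\tau)\|_{L^{\alpha+2}}^{\alpha+2}\,d\tau$ and, after rewriting identity (\ref{eq11}) in terms of $h$, observes that the combination $(1-s)h'(s)+\tfrac{\alpha d-4}{2}h(s)$ equals $(1-s)^{(\alpha d-2)/2}\bigl((1-s)^{(4-\alpha d)/2}h(s)\bigr)'$; bounding this below by $C(1-s)^{-2\theta}$ via Lemma~\ref{blowup0} and integrating yields $h(s)\geq C(1-s)^{-2\theta}$ for $s$ near $1$, which is exactly your conclusion after the change of variables. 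Your use of the pointwise identity $E_{1}(s)=(1-s)^{(\alpha d-4)/2}E_{2}(s)$ together with (\ref{E2}) is a cleaner repackaging of the same computation: the paper's integrating factor $(1-s)^{(4-\alpha d)/2}$ applied to $h$ is precisely what converts $h$ (up to the constant $E_{1}(0)$) into $E_{2}$, so the two derivations are algebraically equivalent. One remark: your final exponent check is more elaborate than necessary---since $\alpha d>4$, the quantity $1-\alpha d/4$ is negative while $\theta>0$ in every dimension, so the inequality $\theta>1-\alpha d/4$ is immediate and the detour through $\alpha(d)$ is not needed.
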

\begin{proof}
We will only deal with the positive time, since $(-\infty,0)$ can be treated similarly. By Proposition \ref{equivalent}, we deduce from the assumption positively global solution $u$ doesn't have scattering state $u^{+}$ at $+\infty$ that the nonautonomous equation (\ref{nonautonomous}) blows up at $s=1$(i.e., $S_{M}(0,v_{0})=1$). Hence by Theorem \ref{local existence}, one has
\begin{equation}\label{blowup}
   \limsup_{s\uparrow1}\|v(s)\|_{H^{1}}=\infty.
\end{equation}
Moreover, by Lemma \ref{blowup0}, we get
\begin{equation}\label{blowup rate1}
    \|\nabla v(s)\|_{L^{2}}^{2}\geq C(1-s)^{-2\theta}
\end{equation}
for some constant $C>0$ and all $s\in[0,1)$. From (\ref{E1}) we can deduce the following identity
\begin{eqnarray}\label{eq11}
% \nonumber to remove numbering (before each equation)
   \frac{1}{2}\|\nabla v(s)\|_{L^{2}}^{2}=E_{1}(0)+\frac{\lambda}{\alpha+2}(1-s)^{\frac{\alpha d-4}{2}}\|v(s)\|_{L^{\alpha+2}}^{\alpha+2} \\
   \nonumber +\frac{\lambda(\alpha d-4)}{2(\alpha+2)}\int_{0}^{s}(1-\tau)^{\frac{\alpha d-6}{2}}\|v(\tau)\|_{L^{\alpha+2}}^{\alpha+2}d\tau.&&
\end{eqnarray}
Let $h(s)=\frac{\lambda}{\alpha+2}\int_{0}^{s}(1-\tau)^{\frac{\alpha d-6}{2}}\|v(\tau)\|_{L^{\alpha+2}}^{\alpha+2}d\tau$, using (\ref{blowup rate1}) and (\ref{eq11}) we get
\begin{equation}\label{eq12}
    (1-s)^{\frac{\alpha d-2}{2}}((1-s)^{\frac{4-\alpha d}{2}}h(s))'_{s}\geq C(1-s)^{-2\theta}
\end{equation}
for some constant $C>0$ and all $s\in[0,1)$. Note that $\alpha>\frac{4}{d}$ and $h(0)=0$, by integrating the above inequality (\ref{eq12}), we infer that
\begin{equation}\label{eq14}
    (1-s)^{\frac{4-\alpha d}{2}}h(s)\geq C\{(1-s)^{-2\theta+\frac{4-\alpha d}{2}}-1\} \,\,\,\,\, for \,\,\, all \,\, s\in[0,1).
\end{equation}
Since $\alpha>\frac{4}{d}$, one easily verifies that $-2\theta+\frac{4-\alpha d}{2}<0$, thus there exists a $0<s_{0}<1$
such that
\begin{equation}\label{eq15}
    h(s)\geq C(1-s)^{-2\theta}
\end{equation}
for all $s\in[s_{0},1)$. Applying (\ref{Variables PC-Trans}) and (\ref{PC-T1}), (\ref{eq15}) yields
\begin{equation}\label{Lower bound3}
    \int_{0}^{t}(1+\tau)\|u(\tau)\|_{L^{\alpha+2}}^{\alpha+2}d\tau\geq C(1+t)^{2\theta},
\end{equation}
for all $t\in[\frac{s_{0}}{1-s_{0}},+\infty)$. This closes the proof of Proposition \ref{blowup2} for positive time $(0,+\infty)$, and the arguments for $(-\infty,0)$ being similar.
\end{proof}

\section{scattering theory for the focusing NLS}
As is well known, if $\alpha\geq\alpha(d)$, scattering theory holds in whole $\Sigma$ space in the defocusing case(see \cite{G1,N2,T3}), this section is devoted to the studying on the scattering theory for the focusing NLS.

For $\lambda\in \mathbb{R}$, $\alpha(d)<\alpha<\frac{4}{d-2}$($\alpha(d)<\alpha<\infty$, if $d=1,2$), let
\begin{equation}\label{definition}
    \|u\|_{X_{\infty}}=\sup_{0<t<\infty}t^{\beta}\|u(t)\|_{L^{\alpha+2}}, \,\,\,\,\,\, \|\varphi\|_{W_{\infty}}=\sup_{0<t<\infty}t^{\beta}\|e^{it\Delta}\varphi\|_{L^{\alpha+2}},
\end{equation}
where $\beta=\frac{4-(d-2)\alpha}{2\alpha(\alpha+2)}$. Cazenave and Weissler proved in \cite{C3} that there exists $\varepsilon_{0}>0$ with the following property. Let $u_{0}\in H^{1}(\mathbb{R}^{d})$ and let $u$ be the corresponding unique, maximal strong $H^{1}$ solution of (\ref{eq1}), if $\|u_{0}\|_{W_{\infty}}\leq\varepsilon\leq\varepsilon_{0}$, then the solution $u$ is positively global, and $\|u\|_{X_{\infty}}\leq 2\varepsilon$. The same conclusion also holds for $(-\infty,0)$ with the usual modification.

So it's natural to ask if $u_{0}\in\Sigma$ satisfying $\|u_{0}\|_{W_{\infty}}\leq\varepsilon_{0}$, does the corresponding global solution $u$ scatter in $\Sigma$? In the defocusing case, it's well known that the answer is yes; as to the focusing case, we will give a positive answer below.
\begin{thm}\label{XW}
Assume $\lambda>0$, $\alpha(d)<\alpha<\frac{4}{d-2}$ if $d\geq3$, and $4\leq\alpha<\infty$, if $d=1$. Let $u_{0}\in\Sigma$ and let $u$ be the corresponding unique, maximal solution of (\ref{eq1}). Then there exists $\varepsilon_{0}>0$ such that, if $\|u_{0}\|_{W_{\infty}}\leq\varepsilon_{0}$, then the solution $u$ is positively global and scatters at $+\infty$ in $\Sigma$. The same conclusion also holds for $(-\infty,0)$ with the usual modification on assumptions.
\end{thm}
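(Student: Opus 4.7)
The plan is to upgrade the $X_\infty$ decay given by the Cazenave--Weissler small-data theorem recalled above to full $\Sigma$-scattering by working in Lorentz-refined Strichartz spaces. Since $\Sigma\subset H^1$, the hypothesis $\|u_0\|_{W_\infty}\le \varepsilon_0$ places us in the Cazenave--Weissler regime, so $u$ is positively global with $\|u\|_{X_\infty}\le 2\varepsilon_0$; that is,
\[
\|u(t)\|_{L^{\alpha+2}}\le 2\varepsilon_0\,t^{-\beta},\qquad t>0,\quad \beta=\frac{4-(d-2)\alpha}{2\alpha(\alpha+2)}.
\]
The crucial numerological observation is that $a\beta=1$ with $a=\frac{2\alpha(\alpha+2)}{4-(d-2)\alpha}$, so $t\mapsto\|u(t)\|_{L^{\alpha+2}}$ just fails the threshold for $L^a((0,\infty))$ but lies in the weak space $L^{a,\infty}((0,\infty))$ with norm of size $O(\varepsilon_0)$. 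This endpoint phenomenon is what forces the proof into Lorentz spaces rather than the classical Strichartz framework used in Theorem \ref{small data}.

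Next I would apply $P_t=x+2it\nabla$ to the Duhamel formula (\ref{eq2}) using properties (\ref{P0})--(\ref{P2}), in particular the pointwise bound $|P_t(|u|^\alpha u)|\le C|u|^\alpha|P_tu|$. Choosing the admissible pair $(q,r)=\bigl(\tfrac{4(\alpha+2)}{d\alpha},\,\alpha+2\bigr)$, which satisfies $2<q<\infty$ throughout the stated range of $\alpha,d$, the Lorentz--Strichartz estimate of Lemma \ref{Lorentz3} gives
\[
\|P_tu\|_{L^{q,2}((0,T),L^{\alpha+2})}\le C\|xu_0\|_{L^2}+C\bigl\||u|^\alpha P_tu\bigr\|_{L^{q',2}((0,T),L^{r'})}.
\]
The right-hand side is controlled via the generalized H\"older inequality (Lemma \ref{Lorentz2}): first in space with $1/r'=\alpha/(\alpha+2)+1/r$, then in time with the exponent identities $1/q'=\alpha/a+1/q$ and $1/2=1/\infty+1/2$; combined with the Lorentz power identity (\ref{Lorentz1}) applied to $\|u(t)\|_{L^{\alpha+2}}^\alpha$, this produces a bound of the form $C\varepsilon_0^\alpha\|P_tu\|_{L^{q,2}((0,T),L^{\alpha+2})}$. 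For $\varepsilon_0$ small enough that $C\varepsilon_0^\alpha\le 1/2$, this absorbs; letting $T\uparrow\infty$ then yields a global bound on $\|P_tu\|_{L^{q,2}((0,\infty),L^{\alpha+2})}$ in terms of $\|xu_0\|_{L^2}$. The same argument with $\nabla$ in place of $P_t$ gives a corresponding global bound on $\|\nabla u\|_{L^{q,2}((0,\infty),L^{\alpha+2})}$ in terms of $\|\nabla u_0\|_{L^2}$.

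Finally, $\Sigma$-scattering follows via a Cauchy criterion. Starting from
\[
e^{-it\Delta}u(t)-e^{-is\Delta}u(s)=i\lambda\int_s^t e^{-i\tau\Delta}|u|^\alpha u\,d\tau
\]
and the analogous identity $xe^{-it\Delta}u(t)-xe^{-is\Delta}u(s)=i\lambda\int_s^t e^{-i\tau\Delta}P_\tau(|u|^\alpha u)\,d\tau$ coming from the intertwining $xe^{-i\tau\Delta}=e^{-i\tau\Delta}P_\tau$ in (\ref{P3}), applying the dual Lorentz--Strichartz bound together with the same Lorentz--H\"older estimate controls both differences by
\[
C\|u\|^{\alpha}_{L^{a,\infty}((0,\infty),L^{\alpha+2})}\bigl(\|\nabla u\|_{L^{q,2}((s,t),L^{\alpha+2})}+\|P_\tau u\|_{L^{q,2}((s,t),L^{\alpha+2})}\bigr).
\]
Since $q<\infty$ and the full-interval Lorentz norms are finite by the previous step, these tails tend to $0$ as $s,t\to\infty$ by dominated convergence in the strong Lorentz space $L^{q,2}$, producing the scattering state $u^+\in\Sigma$; the case $t\to-\infty$ then follows from Remark \ref{rem1}. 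The principal obstacle is precisely the endpoint character $a\beta=1$ of the $X_\infty$ decay, which renders classical Strichartz estimates insufficient and demands the Lorentz-refined framework, with a delicate matching of both the first and second Lorentz indices so that $L^{a,\infty}\cdot L^{q,2}\hookrightarrow L^{q',2}$ closes the absorption step.
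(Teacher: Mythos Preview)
Your argument is correct and takes a genuinely different route from the paper's. The paper proves Theorem~\ref{XW} by contradiction: assuming $u$ fails to scatter, it invokes the lower-bound machinery of Section~4 (Propositions~\ref{blowup1} and~\ref{blowup2}, which rest on the pseudo-conformal blow-up rate of Lemma~\ref{blowup0}) to force $\|u(t)\|_{L^{\alpha+2}}$ to be at least a fixed multiple of the decay rate that the $X_\infty$ bound allows, contradicting smallness of $\varepsilon_0$. This requires a case split $\alpha\le 4/d$ versus $\alpha>4/d$ and, in dimension $d=1$, the matching of exponents only works for $\alpha\ge 4$, which is why the theorem is stated with that restriction.

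Your approach is instead direct and constructive: the $X_\infty$ bound immediately gives $u\in L^{a,\infty}((0,\infty),L^{\alpha+2})$ with small norm (since $a\beta=1$), and then the Lorentz--Strichartz/H\"older bootstrap you run for $\nabla u$ and $P_tu$ in $L^{q,2}_tL^{\alpha+2}_x$ is exactly the mechanism the paper itself uses later in the proof of Theorem~\ref{Rapid decay data} (see~(\ref{eq34})--(\ref{eq36})). What your route buys is uniformity: it avoids Section~4 entirely, needs no case split in $\alpha$, and in fact the exponent identities $1/q'=\alpha/a+1/q$ and $2<q<\infty$ hold throughout $\alpha(d)<\alpha<4/(d-2)$ for every $d\ge 1$, so your argument would also cover $d=2$ and the range $\alpha(1)<\alpha<4$ that the paper's contradiction method cannot reach. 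What the paper's route buys is that it exhibits the quantitative link between non-scattering and the pseudo-conformal lower bounds, which is the conceptual backbone of the characterization in Theorem~\ref{characterization of R}. One small point you glossed over: to absorb on $(0,T)$ you implicitly need finiteness of $\|P_tu\|_{L^{q,2}((0,T),L^{\alpha+2})}$ for finite $T$, which follows from the local $\Sigma$-theory (or a continuity argument from $T=0$), just as the paper tacitly assumes in~(\ref{eq34}).
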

\begin{proof}
We have known that(see \cite{C2,C3}) there exists a $\varepsilon_{0}>0$, such that if $\|u_{0}\|_{W_{\infty}}\leq\varepsilon_{0}$, then the solution $u$ is positively global, and
\begin{equation}\label{eq16}
\|u\|_{X_{\infty}}\leq 2\varepsilon_{0}.
\end{equation}
This property also holds for any $0<\varepsilon\leq\varepsilon_{0}$. We will prove the scattering properties by contradiction arguments below.

If $d\geq3$, $\alpha(d)<\alpha\leq\frac{4}{d}$, or $d=1$, $\alpha=4$, we deduce from (\ref{eq16}), (\ref{definition}) and Proposition \ref{blowup1} that, if $u$ doesn't scatter at $+\infty$, then
\begin{equation}\label{eq17}
    C_{1}(\alpha,d)\varepsilon_{0}(1+t)^{-\frac{4-(d-2)\alpha}{2\alpha}}\geq\|u(t)\|_{L^{\alpha+2}}^{\alpha+2}\geq C(1+t)^{-\frac{4-(d-2)\alpha}{2\alpha}}
\end{equation}
for some constants $C_{1},C>0$, and all $t\in[1,+\infty)$. It is absurd if we take $\varepsilon_{0}$ sufficiently small such that $C/C_{1}>\varepsilon_{0}$.

If $d\geq3$, $\frac{4}{d}<\alpha<\frac{4}{d-2}$, or $d=1$, $4<\alpha<\infty$, it follows from (\ref{eq16}), (\ref{definition}) and Proposition \ref{blowup2} that, if $u$ doesn't scatter at $+\infty$, then there exists a $t_{0}>0$ such that
\begin{equation}\label{eq18}
    C_{2}(\alpha,d)\varepsilon_{0}(1+t)^{2\theta}\geq\int_{0}^{t}(1+\tau)\|u(\tau)\|_{L^{\alpha+2}}^{\alpha+2}d\tau\geq C(1+t)^{2\theta}
\end{equation}
for some constants $C_{2},C>0$, and all $t\in[t_{0},+\infty)$, where $\theta$ is defined the same as in Lemma \ref{blowup0}. Therefore it is absurd if we take $\varepsilon_{0}$ sufficiently small such that $C/C_{2}>\varepsilon_{0}$. This closes the proof of Theorem \ref{XW} for positive time $(0,+\infty)$, and the arguments for $(-\infty,0)$ being similar.
\end{proof}
\begin{rem}\label{rem2}
Note that we can deduce from the isometric properties of $(e^{it\Delta})_{t\in\mathbb{R}}$ and dispersive estimate(refer to \cite{C2}) that
\[(1+|t|)^{\frac{\alpha d}{2(\alpha+2)}}\|e^{it\Delta}u_{0}\|_{L^{\alpha+2}}\leq C\|u_{0}\|_{H^{1}\cap L^{\frac{\alpha+2}{\alpha+1}}}.\]
Since $\beta<\frac{\alpha d}{2(\alpha+2)}$, we infer that $\|u_{0}\|_{W_{\infty}}\leq C\|u_{0}\|_{H^{1}\cap L^{\frac{\alpha+2}{\alpha+1}}}$. Thus by Theorem \ref{XW}, for $\alpha>\alpha(d)$, there exists $\varepsilon_{0}>0$ such that if
\[\|u_{0}\|_{H^{1}\cap L^{\frac{\alpha+2}{\alpha+1}}}\leq \varepsilon_{0},\]
then the corresponding solution $u$ is global and scattering states $u^{\pm}$ exist in $\Sigma$ at $\pm\infty$.
\end{rem}
The concept ``rapidly decaying solutions"(see Definition \ref{RDS}) plays an important role in the scattering theory for focusing NLS. When $\lambda>0$, $\alpha>\alpha(d)$, Cazenave and Weissler obtained the scattering results for ``rapidly decaying solutions", in terms of which they characterized the sets $\mathcal{R}_{\pm}$(refer to \cite{C1}). We will extend this work to the critical power $\alpha=\alpha(d)$ below.
\begin{thm}\label{rapidly decaying}
Assume $\lambda>0$, $\alpha=\alpha(d)=\frac{2-d+\sqrt{d^{2}+12d+4}}{2d}$ and $d\geq1$, $d\neq2$. Let $u_{0}\in\Sigma$ be such that the corresponding solution $u$ is positively(resp. negatively) global with rapid decay(see Definition \ref{RDS}), i.e.,
\begin{equation}\label{rapid decay}
    \|u\|_{L^{a,\infty}((0,\infty),L^{\alpha+2})}<\infty \,\,\,\,\, (resp. \,\,\,\, \|u\|_{L^{a,\infty}((-\infty,0),L^{\alpha+2})}<\infty),
\end{equation}
where $a=\frac{2\alpha(\alpha+2)}{4-\alpha(d-2)}$, then $u_{0}$ has scattering state at $+\infty$(resp. $-\infty$).
\end{thm}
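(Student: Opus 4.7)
By Remark \ref{rem1}, it suffices to treat the scattering at $+\infty$. My plan is to argue by contradiction: assume $u$ is positively global, $\|u\|_{L^{a,\infty}((0,\infty),L^{\alpha+2})}<\infty$, yet $u_0\notin\mathcal{R}_+$. By Proposition \ref{equivalent}, this is equivalent to the transformed solution $v$ of (\ref{nonautonomous}), defined via the pseudo-conformal transformation (\ref{PC-Trans}), failing to have a strong limit in $\Sigma$ as $s\uparrow 1$. Theorem \ref{local existence} then forces $S_M(0,v_0)=1$ and $\limsup_{s\uparrow 1}\|v(s)\|_{H^1}=\infty$, after which Lemma \ref{blowup0} supplies the quantitative blowup rate $\|\nabla v(s)\|_{L^2}^2\geq C(1-s)^{-2\theta}$.

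The heart of the proof is to upgrade Proposition \ref{blowup1} to the logarithmic lower bound (\ref{Lower bound2}). The key algebraic observation is that the defining relation $d\alpha^2+(d-2)\alpha=4$ is precisely equivalent to the identity $(2-\alpha d)/2-2\theta=-1$ with the sharp exponent $\theta=(d+2)/4-1/\alpha$. At $\alpha=\alpha(d)$ the coefficient $(4-\alpha d)/4$ is strictly positive because $\alpha(d)<4/d$, so substituting the blowup rate into the integrated form of (\ref{E2}) gives
\[
\frac{\lambda}{\alpha+2}\|v(s)\|_{L^{\alpha+2}}^{\alpha+2}+E_2(0)\;\geq\;\frac{4-\alpha d}{4}\int_0^s(1-\tau)^{(2-\alpha d)/2}\|\nabla v(\tau)\|_{L^2}^2\,d\tau\;\geq\;C\int_0^s(1-\tau)^{-1}d\tau,
\]
hence $\|v(s)\|_{L^{\alpha+2}}^{\alpha+2}\geq C\log(1/(1-s))$. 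Undoing the pseudo-conformal transformation via (\ref{Variables PC-Trans}) and (\ref{PC-T1}) delivers the log-improved bound $\|u(t)\|_{L^{\alpha+2}}\geq C(1+t)^{-\alpha d/(2(\alpha+2))}[\log(1+t)]^{1/(\alpha+2)}$ for all $t\geq 1$.

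The contradiction is obtained by a distribution-function computation. Using $d\alpha^2=4-(d-2)\alpha$ one checks that $\alpha d/(2(\alpha+2))=1/a$, so the lower bound has the shape $g(t):=C(1+t)^{-1/a}(\log(1+t))^{1/(\alpha+2)}$. Solving $g(t)>\lambda$ for small $\lambda$ and using $\log(1+T_\lambda)\sim a\log(1/\lambda)$ at the endpoint yields a superlevel set of measure $T_\lambda\sim\lambda^{-a}(\log(1/\lambda))^{2/(d\alpha)}$; together with the numerical identity $2/(ad\alpha)=1/(\alpha+2)$ this gives $\lambda|\{g>\lambda\}|^{1/a}\sim(\log(1/\lambda))^{1/(\alpha+2)}\to\infty$ as $\lambda\to 0^+$. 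Hence $\|g\|_{L^{a,\infty}(0,\infty)}=\infty$, and the pointwise bound $\|u(t)\|_{L^{\alpha+2}}\geq g(t)$ forces $\|u\|_{L^{a,\infty}((0,\infty),L^{\alpha+2})}=\infty$, contradicting the rapid-decay hypothesis.

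The main obstacle I anticipate is the case $d=1$. In that regime Lemma \ref{blowup0} only yields $\theta=1-2/\alpha$, which is strictly smaller than $(d+2)/4-1/\alpha$ at $\alpha=\alpha(1)$, so the algebraic identity $(2-\alpha d)/2-2\theta=-1$ fails; the integrand in the display above then has exponent strictly greater than $-1$, the integral stays bounded as $s\uparrow 1$, and the logarithmic divergence disappears. To handle $d=1$ I would first sharpen Lemma \ref{blowup0} at the critical power, exploiting the one-dimensional embedding $H^1(\mathbb{R})\hookrightarrow L^\infty$ together with the Lorentz-refined Strichartz estimate in Lemma \ref{Lorentz3} applied to the nonautonomous equation (\ref{nonautonomous1}); alternatively, one may close the argument on the $u$-side by interpolating the hypothesis $u\in L^{a,\infty}((0,\infty),L^{\alpha+2})$ with the uniform bound $\|u(t)\|_{L^{\alpha+2}}\leq C$ (which in $d=1$ follows from $\|u(t)\|_{H^1}\leq C$) to produce $u\in L^p((0,\infty),L^{\alpha+2})$ for every $p>a$, and then run the standard Strichartz argument of Theorem \ref{small data}. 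Once either route is carried out, the remainder of the proof is identical.
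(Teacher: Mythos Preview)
Your treatment of $d\geq 3$ is correct and coincides with the paper's: the logarithmic lower bound you derive is exactly (\ref{Lower bound2}) of Proposition \ref{blowup1}, and the weak-$L^a$ contradiction you obtain via the distribution function is equivalent to the paper's computation using (\ref{Lorentz1}).

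The $d=1$ case, however, has a genuine gap. Your second proposed route---interpolate $L^{a,\infty}\cap L^\infty$ to get $u\in L^p((0,\infty),L^{\alpha+2})$ for all $p>a$ and then run a Strichartz bootstrap---does not close. A short computation (independent of the choice of admissible pairs $(q_1,r_1)$, $(q_2,r_2)$) shows that the time exponent needed on the factor $|u|^\alpha$ in any estimate of the form $\||u|^\alpha u\|_{L^{q_1'}W^{1,r_1'}}\leq\|u\|_{L^{p_1}L^{\alpha+2}}^{\alpha}\|u\|_{L^{q_2}W^{1,r_2}}$ is forced to be exactly $p_1=a$. Hence $L^p$ control for $p>a$ is useless, and since the weak-$L^a$ norm on a tail $(T,\infty)$ need not decay (take $t^{-1/a}$), the usual absorption step fails. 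The reference to Theorem \ref{small data} is also misplaced: that result is for $d\geq 3$ and relies on smallness that is unavailable here.

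Your first proposed route is closer in spirit but misses the key mechanism. The paper does \emph{not} sharpen Lemma \ref{blowup0} for general blowup solutions of (\ref{nonautonomous1}); instead it works on the $u$-side with $P_t=x+2it\nabla$ and feeds the rapid-decay hypothesis itself into the lower-bound argument. Concretely, Gagliardo--Nirenberg in $d=1$ gives $\|u\|_{L^\infty}^\alpha\leq C\|u\|_{L^{\alpha+2}}^{\alpha(\alpha+2)/(\alpha+4)}\bigl(t^{-1}\|P_tu\|_{L^2}\bigr)^{2\alpha/(\alpha+4)}$; applying the Lorentz H\"older inequality (Lemma \ref{Lorentz2}) with the assumed bound $\|u\|_{L^{a,\infty}_tL^{\alpha+2}_x}<\infty$ inside a continuation argument on $\|P_\tau u\|_{L^\infty((t,T),L^2)}$ yields the sharp rate $\|P_t u(t)\|_{L^2}\geq Ct^{3/4-1/\alpha}$. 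This is then inserted into a weighted pseudo-conformal identity, equation (\ref{P-conservation1}), which plays the role of (\ref{E2}) on the $u$-side, to produce the logarithmic lower bound $\|u(t)\|_{L^{\alpha+2}}\geq Ct^{-\alpha/(2(\alpha+2))}(\log t)^{1/(\alpha+2)}$; after that the contradiction is identical to the $d\geq 3$ case. The essential point you are missing is that in $d=1$ the sharp blowup exponent is not available from the equation alone---one must use the hypothesis $u\in L^{a,\infty}$ to obtain it.
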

\begin{proof}
By Remark \ref{rem1}, we need only deal with the positive time $t\rightarrow+\infty$. We consider separately the cases $d\geq3$ and $d=1$.

First we consider the simpler case $d\geq3$. We argue by contradiction and assume that $u$ doesn't scatter at $+\infty$. Thus by (\ref{rapid decay}), Proposition \ref{blowup1} and (\ref{Lorentz1}), we get immediately
\begin{eqnarray}
% \nonumber to remove numbering (before each equation)
 \nonumber \infty&>&\|u\|_{L^{a,\infty}((0,\infty),L^{\alpha+2})}^{a}\geq C\|(1+t)^{-\frac{\alpha d}{2(\alpha+2)}}[\log(1+t)]^{\frac{1}{\alpha+2}}\|_{L^{a,\infty}(0,\infty)}^{a} \\
 \nonumber &\geq&C\|(1+t)^{-1}[\log(1+t)]^{\frac{2}{\alpha d}}\|_{L^{1,\infty}(0,\infty)},
\end{eqnarray}
which is absurd. This closes our proof for $d\geq3$.

Now we proceed to the case $d=1$. We argue by contradiction and assume that $u$ doesn't scatter at $+\infty$. Then by Proposition \ref{equivalent}, the nonautonomous equation (\ref{nonautonomous}) blows up at $s=1$(i.e., $S_{M}(0,v_{0})=1$). Hence by Theorem \ref{local existence}, one has
\begin{equation}\label{eq19}
   \lim_{s\uparrow1}\|v(s)\|_{H^{1}}=\infty.
\end{equation}
Since $\alpha=\alpha(d)<\frac{4}{d}$, one can deduce from the energy estimates that the positively global $u$ is bounded in $H^{1}(\mathbb{R}^{d})$, so by using (\ref{eq19}) and (\ref{PC-T2}), we get
\begin{equation}\label{eq20}
    \lim_{t\rightarrow+\infty}\|P_{t}u(t)\|_{L^{2}}=\lim_{t\rightarrow+\infty}\|(x+2it\nabla)u(t)\|_{L^{2}}=\infty.
\end{equation}
On the other hand, it follows from (\ref{P3}) and Strichartz's estimates that
\begin{equation}\label{eq21}
    \|P_{_{\tau}}u(\tau)\|_{L^{\infty}((t,T),L^{2})}\leq C\|P_{t}u(t)\|_{L^{2}}+C\|P_{\tau}(|u|^{\alpha}u(\tau))\|_{L^{1}((t,T),L^{2})}
\end{equation}
for any $0<t<T<\infty$. We define a multiplier $M_{t}$ by $M_{t}=e^{\frac{i|x|^{2}}{4t}}$. Then by Gagliardo-Nirenberg's inequality, combined with (\ref{P1}) and (\ref{P2}) we have for any $0<t<T<\infty$,
\begin{eqnarray}\label{nonlinearity}
% \nonumber to remove numbering (before each equation)
  \nonumber \|P_{\tau}(|u|^{\alpha}u(\tau))\|_{L^{1}((t,T),L^{2})}&\leq&C \int_{t}^{T}\|u(\tau)\|_{L^{\infty}}^{\alpha}\|P_{\tau}u(\tau)\|_{L^{2}}d\tau \\
&\leq&C \int_{t}^{T}\|M_{-\tau}u\|_{L^{\alpha+2}}^{\frac{\alpha(\alpha+2)}{\alpha+4}}\|M_{-\tau}u\|_{\dot{H}^{1}}^{\frac{2\alpha}{\alpha+4}}
\|P_{\tau}u(\tau)\|_{L^{2}}d\tau \\
\nonumber &\leq&C \int_{t}^{T}\tau^{-\frac{2\alpha}{\alpha+4}}\|u(\tau)\|_{L^{\alpha+2}}^{\frac{\alpha(\alpha+2)}{\alpha+4}}
\|P_{\tau}u(\tau)\|_{L^{2}}^{\frac{3\alpha+4}{\alpha+4}}d\tau.
\end{eqnarray}
Therefore, we deduce from the generalized H\"{o}lder's inequality in Lorentz spaces(see Lemma \ref{Lorentz2}) that
\[\|P_{\tau}(|u|^{\alpha}u)\|_{L^{1}((t,T),L^{2})}\leq C\|u\|_{L^{a,\infty}((0,\infty),L^{\alpha+2})}^{\frac{\alpha(\alpha+2)}{\alpha+4}}\|\tau^{-\frac{2\alpha}{\alpha+4}}\|_{L^{2,1}(t,T)}
\|P_{\tau}u\|_{L^{\infty}((t,T),L^{2})}^{\frac{3\alpha+4}{\alpha+4}},\]
and hence by assumption (\ref{rapid decay}) and (\ref{eq21}) we obtain that there exists a constant $K$ independent of $t$ and $T$ such that
\begin{equation}\label{eq22}
    \|P_{_{\tau}}u\|_{L^{\infty}((t,T),L^{2})}\leq K\|P_{t}u(t)\|_{L^{2}}+K\|\tau^{-\frac{2\alpha}{\alpha+4}}\|_{L^{2,1}(t,T)}
    \|P_{\tau}u\|_{L^{\infty}((t,T),L^{2})}^{\frac{3\alpha+4}{\alpha+4}}.
\end{equation}
Note that $u\in C((0,\infty),\Sigma)$, so $\|P_{_{\tau}}u\|_{L^{\infty}((t,T),L^{2})}$is continuous and nondecreasing about $T$ on $(t,\infty)$ and note also that
\[\|P_{_{\tau}}u\|_{L^{\infty}((t,T),L^{2})}\rightarrow\|P_{t}u(t)\|_{L^{2}}, \,\,\,\,\, as \,\,\, T\downarrow t,\]
thus we deduce from (\ref{eq20}) that there exists $T_{0}\in(t,\infty)$ such that $\|P_{_{\tau}}u\|_{L^{\infty}((t,T_{0}),L^{2})}=(K+1)\|P_{t}u(t)\|_{L^{2}}$. Letting $T=T_{0}$ in $(\ref{eq22})$, we obtain
\begin{equation}\label{eq23}
    \|P_{t}u(t)\|_{L^{2}}\leq K((K+1)\|P_{t}u(t)\|_{L^{2}})^{\frac{3\alpha+4}{\alpha+4}}\|\tau^{-\frac{2\alpha}{\alpha+4}}\|_{L^{2,1}(t,T_{0})}.
\end{equation}
Since by (\ref{Lorentz0}) we have $\|\tau^{-\frac{2\alpha}{\alpha+4}}\|_{L^{2,1}(t,T_{0})}\leq \|\tau^{-\frac{2\alpha}{\alpha+4}}\|_{L^{2,1}(t,\infty)}\leq Ct^{-\frac{3\alpha-4}{2(\alpha+4)}}$, we deduce from (\ref{eq23}) that
\[1\leq CK(K+1)^{\frac{3\alpha+4}{\alpha+4}}\|P_{t}u(t)\|_{L^{2}}^{\frac{2\alpha}{\alpha+4}}t^{-\frac{3\alpha-4}{2(\alpha+4)}},\]
and hence
\begin{equation}\label{eq24}
    \|P_{t}u(t)\|_{L^{2}}=\|(x+2it\nabla)u(t)\|_{L^{2}}\geq Ct^{\frac{3}{4}-\frac{1}{\alpha}}
\end{equation}
for some constant $C>0$ and arbitrary $t\in[0,\infty)$.
Now taking the $L_{x}^{2}$ coupling of the equation (\ref{eq1}) and $it^{\frac{\alpha d}{2}}\Delta u+t^{\frac{\alpha d}{2}-1}x\cdot\nabla u-i\frac{|x|^{2}}{4}t^{\frac{\alpha d}{2}-2}u$ and taking the real part, we obtain
\begin{equation}\label{P-conservation}
    \partial_{t}N(t)=-(2-\frac{\alpha d}{2})t^{\frac{\alpha d}{2}-3}\|(x+2it\nabla)u(t)\|_{L^{2}}^{2},
\end{equation}
where
\begin{equation}\label{P-mass}
    N(t)=t^{\frac{\alpha d}{2}-2}\|(x+2it\nabla)u(t)\|_{L^{2}}^{2}-\frac{8\lambda}{\alpha+2}t^{\frac{\alpha d}{2}}\|u(t)\|_{L^{\alpha+2}}^{\alpha+2}.
\end{equation}
Integrating the identity (\ref{P-conservation}), we get
\begin{equation}\label{P-conservation1}
    t^{\frac{\alpha d-4}{2}}\|P_{t}u\|_{L^{2}}^{2}-\frac{8\lambda}{\alpha+2}t^{\frac{\alpha d}{2}}\|u(t)\|_{L^{\alpha+2}}^{\alpha+2}=N(1)-(2-\frac{\alpha d}{2})\int_{1}^{t}\tau^{\frac{\alpha d-6}{2}}\|P_{\tau}u(\tau)\|_{L^{2}}^{2}d\tau.
\end{equation}
Note that $d=1$ and $\alpha=\alpha(d)$ now, applying estimate (\ref{eq24}), we deduce from (\ref{P-conservation1}) that
\begin{eqnarray}
\nonumber &&\frac{8\lambda}{\alpha+2}t^{\frac{\alpha}{2}}\|u(t)\|_{L^{\alpha+2}}^{\alpha+2}+N(1) \\
\nonumber &=&t^{\frac{\alpha-4}{2}}\|P_{t}u\|_{L^{2}}^{2}+(2-\frac{\alpha}{2})\int_{1}^{t}\tau^{\frac{\alpha-6}{2}}\|P_{\tau}u(\tau)\|_{L^{2}}^{2}d\tau \\
\nonumber &\geq& C+C\int_{1}^{t}\tau^{-1}d\tau \geq C+C\log t,
\end{eqnarray}
from which we get immediately
\begin{equation}\label{eq25}
    \|u(t)\|_{L^{\alpha+2}}\geq Ct^{-\frac{\alpha}{2(\alpha+2)}}(\log t)^{\frac{1}{\alpha+2}}
\end{equation}
for some constant $C>0$ and all $t\in[1,\infty)$. Thus by (\ref{rapid decay}), (\ref{eq25}) and (\ref{Lorentz1}) we get immediately
\begin{eqnarray}
% \nonumber to remove numbering (before each equation)
 \nonumber \infty&>&\|u\|_{L^{a,\infty}((0,\infty),L^{\alpha+2})}^{a}\geq C\|(1+t)^{-\frac{\alpha}{2(\alpha+2)}}[\log(1+t)]^{\frac{1}{\alpha+2}}\|_{L^{a,\infty}(0,\infty)}^{a} \\
 \nonumber &\geq&C\|(1+t)^{-1}[\log(1+t)]^{\frac{2}{\alpha}}\|_{L^{1,\infty}(0,\infty)},
\end{eqnarray}
which is absurd. This closes our proof for $d=1$.
\end{proof}
The next Theorem indicates a sufficient condition on the initial data $u_{0}\in\Sigma$ at $\alpha=\alpha(d)$, which guarantees the corresponding global solutions have rapid decay as $t\rightarrow\pm\infty$. Thus by Theorem \ref{rapidly decaying}, these initial values $u_{0}$ have scattering states at $\pm\infty$ when $d\geq1$, $d\neq2$.
\begin{thm}\label{Rapid decay data}
Let $\lambda\in\mathbb{R}$, $\alpha=\alpha(d)=\frac{2-d+\sqrt{d^{2}+12d+4}}{2d}$, $d\geq1$, and let
\[a=\frac{2\alpha(\alpha+2)}{4-\alpha(d-2)}.\]
There exists $\varepsilon_{0}>0$ such that, if $u_{0}\in H^{2}\cap \mathcal{F}(H^{2})\subset \Sigma$ and there exists an admissible pair $(\mu,\nu)$ with $\max\{\frac{d(\alpha+2)}{d+\alpha},2\}< \nu<\alpha+2$ satisfying
\begin{equation}\label{eq26}
    \sup_{0\leq t<\infty}(t+1)^{\frac{2}{\mu}}\{\|e^{it\Delta}[(x+2i\nabla)u_{0}]\|_{L^{\nu}}+\|e^{it\Delta}u_{0}\|_{L^{\nu}}\}\leq\varepsilon_{0},
\end{equation}
or respectively,
\begin{equation}\label{eq27}
    \sup_{-\infty< t\leq 0}(|t|+1)^{\frac{2}{\mu}}\{\|e^{it\Delta}[(x+2i\nabla)u_{0}]\|_{L^{\nu}}+\|e^{it\Delta}u_{0}\|_{L^{\nu}}\}\leq\varepsilon_{0},
\end{equation}
then the corresponding maximal solution $u$ of (\ref{eq1}) is a positively(resp. negatively) rapidly decaying solution(see Definition \ref{RDS}). Moreover,
\[u\in L^{q,2}((0,\infty),W^{1,r}(\mathbb{R}^{d})) \,\,\,\,\,\, (resp. \,\,\, u\in L^{q,2}((-\infty,0),W^{1,r}(\mathbb{R}^{d}))),\]
\[Pu\in L^{q,2}((0,\infty),L^{r}(\mathbb{R}^{d})) \,\,\,\,\,\, (resp. \,\,\, Pu\in L^{q,2}((-\infty,0),L^{r}(\mathbb{R}^{d}))),\]
where operator $P=x+2it\nabla$ and $(q,r)$ is an admissible pair with $r=\alpha+2$, therefore scattering state exists in $\Sigma$ at $+\infty$(resp. $-\infty$).
\end{thm}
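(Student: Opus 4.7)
The plan is to run a small-data fixed-point argument for the Duhamel formula (\ref{eq2}) in a Banach space tailored to the criticality at $\alpha=\alpha(d)$. Throughout let $(q,r)$ denote the admissible pair with $r=\alpha+2$, so that $q=2a$ by a direct computation with the admissibility relation. The defining equation $d\alpha^{2}+(d-2)\alpha-4=0$ of $\alpha(d)$ is equivalent to $a=\alpha+1$; this algebraic coincidence is what will make the Lorentz--H\"older exponents in the nonlinearity close exactly.

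The first task is to upgrade hypothesis (\ref{eq26}) into the critical pointwise decay $\|e^{it\Delta}u_{0}\|_{L^{\alpha+2}}\leq C\varepsilon_{0}(1+t)^{-\beta}$ with $\beta=d\alpha/(2(\alpha+2))=1/a$. From the commutator identities (\ref{P3}) one has $e^{it\Delta}[(x+2i\nabla)u_{0}]=P_{t+1}e^{it\Delta}u_{0}$, while (\ref{P1}) identifies $\|P_{t+1}e^{it\Delta}u_{0}\|_{L^{\nu}}$ with $2(t+1)\|\nabla(e^{-i|x|^{2}/(4(t+1))}e^{it\Delta}u_{0})\|_{L^{\nu}}$, so the hypothesis yields a $\dot{W}^{1,\nu}$-control of $e^{-i|x|^{2}/(4(t+1))}e^{it\Delta}u_{0}$ decaying as $\varepsilon_{0}(1+t)^{-2/\mu-1}$. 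Sobolev embedding promotes this to the same decay in $L^{\nu^{\ast}}$ for $e^{it\Delta}u_{0}$, where $\nu^{\ast}=d\nu/(d-\nu)$ when $\nu<d$ and $\nu^{\ast}=\infty$ otherwise; the lower bound $\nu>d(\alpha+2)/(d+\alpha)$ in the hypothesis is exactly the condition that the Sobolev index $\nu^{\ast}$ exceeds $\alpha+2$. H\"older interpolation between the $L^{\nu}$ and $L^{\nu^{\ast}}$ bounds (using $\nu<\alpha+2\leq\nu^{\ast}$) then gives the asserted critical decay in $L^{\alpha+2}$, and in particular $\|e^{it\Delta}u_{0}\|_{L^{a,\infty}((0,\infty),L^{\alpha+2})}\leq C\varepsilon_{0}$. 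Meanwhile Lemma \ref{Lorentz3}, together with $Pe^{it\Delta}u_{0}=e^{it\Delta}(xu_{0})$, controls $e^{it\Delta}u_{0}$ in $L^{q,2}W^{1,r}$ and $Pe^{it\Delta}u_{0}$ in $L^{q,2}L^{r}$ by $C\|u_{0}\|_{\Sigma}$.

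The nonlinear heart of the argument combines the pointwise bounds $|\nabla(|u|^{\alpha}u)|\leq C|u|^{\alpha}|\nabla u|$ and (\ref{P2}), H\"older in space with $1/r'=\alpha/(\alpha+2)+1/r$, the generalized H\"older in Lorentz (Lemma \ref{Lorentz2}) in time applied with $1/q'=\alpha/a+1/q$ and $1/2=\alpha/\infty+1/2$, and the scaling rule (\ref{Lorentz1}), to produce
\[\|\nabla(|u|^{\alpha}u)\|_{L^{q',2}L^{r'}}+\|P(|u|^{\alpha}u)\|_{L^{q',2}L^{r'}}\leq C\|u\|_{L^{a,\infty}L^{\alpha+2}}^{\alpha}\bigl(\|u\|_{L^{q,2}W^{1,r}}+\|Pu\|_{L^{q,2}L^{r}}\bigr),\]
where the identity $\alpha/a+1/q=1/q'$ reduces precisely to $a=\alpha+1$. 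Feeding this into the Duhamel formula and applying Lemma \ref{Lorentz3} closes a standard contraction on a small ball of the Banach space
\[Y=\bigl\{u:\ \|u\|_{L^{a,\infty}((0,\infty),L^{\alpha+2})}+\|u\|_{L^{q,2}((0,\infty),W^{1,r})}+\|Pu\|_{L^{q,2}((0,\infty),L^{r})}<\infty\bigr\},\]
whose $L^{a,\infty}$ radius is made small via the first paragraph. By uniqueness this fixed point is $u$, which therefore satisfies all the claimed Strichartz--Lorentz bounds and is rapidly decaying in the sense of Definition \ref{RDS}. Scattering in $\Sigma$ then follows as at the end of the proof of Theorem \ref{small data}: writing $e^{-is\Delta}u(s)-e^{-it\Delta}u(t)=i\lambda\int_{t}^{s}e^{-i\tau\Delta}|u|^{\alpha}u\,d\tau$ and using $xe^{-i\tau\Delta}=e^{-i\tau\Delta}P_{\tau}$ from (\ref{P3}) on the weighted piece, Lemma \ref{Lorentz3} and the nonlinear estimate above force both $\|e^{-is\Delta}u(s)-e^{-it\Delta}u(t)\|_{H^{1}}$ and $\|x(e^{-is\Delta}u(s)-e^{-it\Delta}u(t))\|_{L^{2}}$ to vanish as $t,s\to+\infty$.

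The main obstacle I anticipate is the $L^{a,\infty}$ part of the Duhamel estimate. Because $\beta(\alpha+1)=1$ at the critical power $\alpha=\alpha(d)$, any pointwise dispersive bound on $\int_{0}^{t}e^{i(t-\tau)\Delta}|u|^{\alpha}u\,d\tau$ produces a borderline $t^{-\beta}\log t$ tail that destroys the naive $X_{\infty}$-type scheme; this logarithm must be absorbed by the Lorentz version of O'Neil's convolution inequality, using that the dispersive kernel $\tau^{-\beta}$ sits in the weak space $L^{a,\infty}$ while, by (\ref{Lorentz1}), $\|u(\tau)\|_{L^{\alpha+2}}^{\alpha+1}$ sits in $L^{1,\infty}$. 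The remaining technicality is the Sobolev step of the linear estimate, which has to be handled separately in the subcritical ($\nu<d$), critical ($\nu=d$), and supercritical ($\nu>d$) regimes.
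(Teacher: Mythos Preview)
Your strategy attempts to close everything directly on $(0,\infty)$ via a single Lorentz-space contraction, whereas the paper proceeds in two stages: it first applies the pseudo-conformal transformation (\ref{PC-Trans}) to obtain the rapid decay, and only then runs the Lorentz--Strichartz continuation (this second stage coincides with your nonlinear estimate). Your linear upgrade of hypothesis (\ref{eq26}) to the pointwise decay $\|e^{it\Delta}u_{0}\|_{L^{\alpha+2}}\leq C\varepsilon_{0}(1+t)^{-1/a}$ is correct and is in fact the direct-space shadow of the paper's observation that (\ref{eq26}) is equivalent to $\sup_{s\in[0,1)}\|e^{is\Delta}v_{0}\|_{W^{1,\nu}}\leq\varepsilon_{0}$ for $v_{0}=e^{-i|x|^{2}/4}u_{0}$.

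The gap is exactly where you anticipate it: the $L^{a,\infty}$ Duhamel closure. O'Neil's inequality (\ref{Y in Lorentz}) requires $p_{1},p_{2},p>1$ strictly, and your scheme forces the nonlinear factor into $L^{1,\infty}$ in time (since $a=\alpha+1$), which is the forbidden endpoint. The logarithm is \emph{not} absorbed: if $\|u(\tau)\|_{L^{\alpha+2}}\sim\tau^{-1/a}$ then $\int_{1}^{t/2}(t-\tau)^{-1/a}\|u(\tau)\|_{L^{\alpha+2}}^{\alpha+1}\,d\tau\sim t^{-1/a}\log t$, so the Duhamel image genuinely leaves $L^{a,\infty}$. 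Without this piece the contraction cannot close, because your own nonlinear estimate needs $\|u\|_{L^{a,\infty}L^{\alpha+2}}$ as the small coefficient; and you cannot retreat to a stronger Lorentz index $L^{a,q}$ with $q<\infty$ since already the linear flow $(1+t)^{-1/a}$ fails to lie there.

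The paper sidesteps this by moving to the pseudo-conformal picture: the Duhamel kernel for $v$ becomes $(1-\tau)^{(\alpha d-4)/2}(s-\tau)^{-2/\mu}$, and the hypothesis $\nu<\alpha+2$ is precisely what makes $\tfrac{4-\alpha d}{2}+\tfrac{2}{\mu}<1$, so this kernel is uniformly integrable on $[0,1)$. A continuation argument in $\sup_{s}\|v(s)\|_{W^{1,\nu}}$ (using $W^{1,\nu}\hookrightarrow L^{\alpha\nu/(\nu-2)}$, which is where $\nu>d(\alpha+2)/(d+\alpha)$ enters) then yields $\sup_{s}\|v(s)\|_{L^{\alpha+2}}\leq 2C\varepsilon_{0}$, and undoing (\ref{PC-T1}) gives the pointwise bound $\|u(t)\|_{L^{\alpha+2}}\leq 2C\varepsilon_{0}(t+1)^{-1/a}$. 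From that point your Lorentz--Strichartz step (your second displayed estimate, which is (\ref{eq34}) in the paper) is exactly right and delivers the remaining conclusions.
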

\begin{proof}
By Remark \ref{rem1}, we need only to deal with the positive time $(0,+\infty)$. Since $\alpha=\alpha(d)$, by energy estimates, it's well known that the solution $u$ is global and bounded in $H^{1}(\mathbb{R}^{d})$, moreover, $u\in C(\mathbb{R},\Sigma)$. Thus we have $v(s,y)$ (the Pseudo-conformal Transformation of $u(t,x)$, see Section 2) defined by (\ref{PC-Trans}) satisfies the nonautonomous integral equation (\ref{nonauto Integral Eq1}) on the interval $(0,1)$, and $v\in C([0,1),\Sigma)$.

Therefore, by applying the dispersive estimates(see \cite{C2}) and H\"{o}lder's inequality to the integral equation (\ref{nonauto Integral Eq1}), we have
\begin{equation}\label{eq29}
    \| v(s)\|_{W^{1,\nu}} \leq \|e^{is\Delta}v_{0}\|_{W^{1,\nu}}+C\int_{0}^{s}(1-\tau)^{\frac{\alpha d-4}{2}}(s-\tau)^{-\frac{2}{\mu}}\|v\|_{L^{\frac{\alpha\nu}{\nu-2}}}^{\alpha}\|v\|_{W^{1,\nu}}d\tau.
\end{equation}
Note that by definition of the Pseudo-conformal Transformation (see (\ref{Variables PC-Trans}), (\ref{PC-Trans}) and (\ref{PC-T1})), one easily verifies that the condition (\ref{eq26}) is equivalent to
\begin{equation}\label{eq28}
    \sup_{s\in[0,1)}\|e^{is\Delta}v_{0}\|_{W^{1,\nu}}\leq \varepsilon_{0}.
\end{equation}
Note also that $\alpha=\alpha(d)$, $\max\{\frac{d(\alpha+2)}{d+\alpha},2\}< \nu<\alpha+2$, so we have $L^{\frac{\alpha\nu}{\nu-2}}(\mathbb{R}^{d})\hookrightarrow W^{1,\nu}(\mathbb{R}^{d})$ and $\frac{4-\alpha d}{2}+\frac{2}{\mu}<1$, hence by H\"{o}lder's inequality we get
\begin{equation}\label{eq30}
    \int_{0}^{s}(1-\tau)^{\frac{\alpha d-4}{2}}(s-\tau)^{-\frac{2}{\mu}}d\tau\leq C(\nu,d).
\end{equation}
Set $\Phi(s)=\sup_{\tau\in[0,s]}\|v(\tau)\|_{W^{1,\nu}}$, for $0<s<1$. Then we can deduce from (\ref{eq29}), (\ref{eq28}) and (\ref{eq30}) immediately that
\begin{equation}\label{continuation argument1}
    \Phi(s)\leq \varepsilon_{0}+C\Phi(s)^{\alpha+1} \,\,\, for \,\,\, all \,\,\, 0<s<1.
\end{equation}
Since $u_{0}\in H^{2}\cap \mathcal{F}(H^{2})$, it follows from a trivial continuation arguments(similar to the proof of Theorem \ref{small data}, we omit the details here) that if $\varepsilon_{0}$ is small enough such that $C(2\varepsilon_{0})^{\alpha+1}< \varepsilon_{0}$, then
\begin{equation}\label{eq31}
    \sup_{s\in[0,1)}\|v(s)\|_{W^{1,\nu}}\leq 2\varepsilon_{0}.
\end{equation}
Since $\max\{\frac{d(\alpha+2)}{d+\alpha},2\}< \nu<\alpha+2$, which implies $L^{\alpha+2}\hookrightarrow W^{1,\nu}$, we have \[\sup_{s\in[0,1)}\|v(s)\|_{L^{\alpha+2}}\leq C\sup_{s\in[0,1)}\|v(s)\|_{W^{1,\nu}}\leq 2C(\nu,d)\varepsilon_{0},\]
combined with identity (\ref{PC-T1}), we infer that
\begin{equation}\label{eq32}
    \|u(t)\|_{L^{\alpha+2}}\leq 2C\varepsilon_{0}(t+1)^{-1/a}.
\end{equation}
Therefore, we have
\begin{equation}\label{eq33}
    \|u\|_{L^{a}((0,\infty),L^{\alpha+2})}\leq 2C(\nu,d)\varepsilon_{0},
\end{equation}
which implies that $u$ is a positively rapidly decaying solution. Thus by Theorem \ref{rapidly decaying}, if $\lambda>0$, $d\neq 2$, scattering state $u^{+}$ exists at $+\infty$.

Furthermore, by using the Strichartz's and H\"{o}lder's estimates in Lorentz spaces(see Lemma \ref{Lorentz3}, Lemma \ref{Lorentz2}), we get there exists K independent of $T$ and $u_{0}$ such that
\begin{equation}\label{eq34}
    \|u\|_{L^{q,2}((0,T),W^{1,r})}\leq K\|u_{0}\|_{H^{1}}+K\|u\|_{L^{a,\infty}((0,T),L^{r})}^{\alpha}\|u\|_{L^{q,2}((0,T),W^{1,r})}
\end{equation}
for every $0<T<\infty$, where $(q,r)$ is an admissible pair with $r=\alpha+2$.
Thus by continuation arguments, one can easily deduce from (\ref{eq33}) and (\ref{eq34}) that if $\varepsilon_{0}$ is sufficiently small such that $2^{\alpha+1}K(C\varepsilon_{0})^{\alpha}<1$, then
\begin{equation}\label{eq35}
    \|u\|_{L^{q,2}((0,\infty),W^{1,r})}\leq 2K\|u_{0}\|_{H^{1}}.
\end{equation}
Similarly, one can easily obtain from Srichartz's estimates and (\ref{eq33}) that
\begin{equation}\label{eq36}
    \|(x+2it\nabla)u\|_{L^{q,2}((0,\infty),L^{r})}\leq 2K\|xu_{0}\|_{L^{2}}.
\end{equation}
Applying Strichartz's estimate in Lorentz spaces(Lemma \ref{Lorentz3}), combined with (\ref{eq33}), (\ref{eq35}) and (\ref{eq36}) we get for any $0<t<s$,
\[\|e^{-it\Delta}u(t)-e^{-is\Delta}u(s)\|_{H^{1}}\leq C\|u\|_{L^{q,2}((t,s),W^{1,r})}\rightarrow0,\]
\[\|x(e^{-it\Delta}u(t)-e^{-is\Delta}u(s))\|_{L^{2}}\leq C\|(x+2it\nabla)u\|_{L^{q,2}((t,s),L^{r})}\rightarrow0,\]
as $t,s\rightarrow \infty$. Therefore, the above two estimates implies that, for general $d\geq1$, the scattering state $u^{+}$ exists in $\Sigma$ at $+\infty$.
\end{proof}
We will apply Theorem \ref{Rapid decay data} to certain type of initial data below. In Corollary \ref{Oscillating data}, we will show that when $\alpha=\alpha(d)$, if the initial data is ``sufficiently oscillating", then the corresponding solution will scatter at $\pm\infty$, note that the $L^{2}$ norm of these initial values is unbounded.
\begin{cor}\label{Oscillating data}
 Assume $\lambda\in \mathbb{R}$, $\alpha=\alpha(d)$, $d\geq1$. For arbitrary $\varphi\in H^{2}\cap \mathcal{F}(H^{2})\subset\Sigma$, given $b\in \mathbb{R}$, and let $\widetilde{u}_{b}$ be the corresponding maximal solutions of (\ref{eq1}) with the initial values $\widetilde{u}_{b,0}=e^{i\Delta}(e^{i\frac{b|x|^{2}}{4}}\varphi)$, then there exists $0<b_{0}<\infty$ such that if $b\geq b_{0}$, the solutions $\widetilde{u}_{b}$ are rapidly decaying as $t\rightarrow\infty$, therefore scattering states $u_{b}^{+}$ exist at $+\infty$ and $\widetilde{u}_{b,0}\in \mathcal{R}_{+}$. The same conclusions also hold for $(-\infty,0)$ provided $b\leq -b_{0}$.
\end{cor}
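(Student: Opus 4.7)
The plan is to verify the hypothesis of Theorem \ref{Rapid decay data} for the initial datum $\widetilde{u}_{b,0}$ by taking $b_0$ sufficiently large, so that the theorem delivers both rapid decay and the existence of scattering states in $\Sigma$. The key computation is an explicit formula for the free Schr\"odinger evolution of a Gaussian chirp: substituting $\psi(y)=e^{ib|y|^2/4}\varphi(y)$ into $(e^{is\Delta}\psi)(x)=(4\pi is)^{-d/2}\int e^{i|x-y|^2/(4s)}\psi(y)\,dy$ and completing the square in $y$ yields, whenever $1+bs\neq 0$,
\[
(e^{is\Delta}\psi)(x)=(1+bs)^{-d/2}e^{\frac{ib|x|^2}{4(1+bs)}}\,\big(e^{i\sigma(s)\Delta}\varphi\big)\!\Big(\frac{x}{1+bs}\Big),\qquad \sigma(s)=\frac{s}{1+bs}.
\]
Passing to moduli and rescaling produces the $L^\nu$ identity
\[
\|e^{is\Delta}\psi\|_{L^\nu}=|1+bs|^{-d(1/2-1/\nu)}\,\|e^{i\sigma(s)\Delta}\varphi\|_{L^\nu}.
\]

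For the positive case $b\geq b_0>0$, fix any admissible pair $(\mu,\nu)$ permitted by Theorem \ref{Rapid decay data}; admissibility reads $\tfrac{2}{\mu}=d(\tfrac12-\tfrac1\nu)$. Since $e^{it\Delta}\widetilde{u}_{b,0}=e^{i(t+1)\Delta}\psi$, the formula at $s=t+1\geq 1$ gives $1+b(t+1)\geq 1+b>0$ and
\[
(t+1)^{2/\mu}\|e^{it\Delta}\widetilde{u}_{b,0}\|_{L^\nu}=\Big(\tfrac{t+1}{1+b(t+1)}\Big)^{2/\mu}\|e^{i\sigma(t+1)\Delta}\varphi\|_{L^\nu}.
\]
The map $t\mapsto\tfrac{t+1}{1+b(t+1)}$ is monotone increasing on $[0,\infty)$ with supremum $1/b$, while $\sigma(t+1)$ ranges over the bounded interval $(0,1/b]$; the Sobolev embedding $H^2\hookrightarrow L^\nu$ (valid because $\nu<\alpha+2\leq 2d/(d-2)$) provides the uniform bound $\|e^{i\sigma\Delta}\varphi\|_{L^\nu}\leq C(\varphi)$. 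The commutation relation $(x+2i\nabla)e^{i\Delta}=e^{i\Delta}x$ from (\ref{P3}) reduces the gradient-type term in (\ref{eq26}) to the same form with $\varphi$ replaced by $x\varphi$, which lies in $H^2$ since $\varphi\in\mathcal{F}(H^2)$. Combining, the full quantity in (\ref{eq26}) is at most $C(\varphi,x\varphi)b^{-2/\mu}$; choosing $b_0$ so this is $\leq\varepsilon_0$ verifies the hypothesis for all $b\geq b_0$. Theorem \ref{Rapid decay data} then yields rapid decay, the existence of the scattering state $u_b^+\in\Sigma$, and $\widetilde{u}_{b,0}\in\mathcal{R}_+$.

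The negative case $b\leq -b_0$, $t\to-\infty$ proceeds by the same template on the asymptotic tail $t\leq -1$: there $s=t+1\leq 0$ and $b<0$ force $bs\geq 0$, so $1+bs\geq 1$, and the identical computation yields $(|t|+1)^{2/\mu}\|e^{it\Delta}\widetilde{u}_{b,0}\|_{L^\nu}\leq C(\varphi)|b|^{-2/\mu}$. The main obstacle is the bounded interval $t\in[-1,0]$: there $1+b(t+1)$ vanishes at the interior point $t_\star=-1-1/b$, so the free evolution norm of $\widetilde{u}_{b,0}$ is merely bounded rather than small and a direct verification of (\ref{eq27}) is not available. This is handled by running the Theorem \ref{Rapid decay data} argument from a reference time $-T$ with $T\geq 1$, using a perturbative comparison of $\widetilde{u}_b(-T)$ with the corresponding free evolution (whose norms decay like $|b|^{-2/\mu}$ via the same formula applied at $s=1-T<0$, where $1+b(1-T)\geq 1+|b|(T-1)$), and then concatenating rapid decay on $(-\infty,-T]$ with the a priori $\Sigma$-control of the NLS flow on the compact interval $[-T,0]$ to yield rapid decay on $(-\infty,0)$ and the scattering conclusion $\widetilde{u}_{b,0}\in\mathcal{R}_-$.
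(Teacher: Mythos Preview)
For $b\geq b_0$ and positive time, your argument coincides with the paper's: both verify (\ref{eq26}) of Theorem~\ref{Rapid decay data} through the explicit lens-transform identity for $e^{is\Delta}(e^{ib|\cdot|^2/4}\varphi)$, use (\ref{P3}) to reduce the $(x+2i\nabla)$-term to the same computation with $x\varphi$ in place of $\varphi$, and conclude that the supremum in (\ref{eq26}) is $O(b^{-2/\mu})$. The only cosmetic difference is that the paper fixes a specific admissible pair $(\mu_0,\nu_0)$ while you work with a generic one.

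Your treatment of the negative-time case, however, contains a genuine error and diverges from the paper. The paper handles $b\leq -b_0$ in one stroke via the conjugation symmetry of Remark~\ref{rem1} ($\mathcal{R}_-=\overline{\mathcal{R}_+}$), reducing to the positive-time statement already established. You instead attempt a direct verification of (\ref{eq27}), and your claim that on the tail $t\leq -1$ one has $(|t|+1)^{2/\mu}\|e^{it\Delta}\widetilde{u}_{b,0}\|_{L^\nu}\leq C(\varphi)|b|^{-2/\mu}$ is false. Writing $s=t+1\leq 0$ with $b<0$ gives $1+bs=1+|b|(|t|-1)$, so the ratio $(|t|+1)/(1+|b|(|t|-1))$ equals $2$ at $t=-1$ (where $s=0$) and only tends to $1/|b|$ as $|t|\to\infty$; its supremum over $t\leq -1$ is $2$, not $O(|b|^{-1})$. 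In particular the left side of (\ref{eq27}) at $t=-1$ equals $2^{2/\mu}\|\varphi\|_{L^\nu}$, which is independent of $b$, so smallness cannot be achieved by enlarging $|b|$. Your proposed fix---restarting the Theorem~\ref{Rapid decay data} argument from a reference time $-T$ and invoking a ``perturbative comparison'' of the nonlinear solution $\widetilde{u}_b(-T)$ with the free flow---is left unspecified: it would require $b$-uniform control of the nonlinear evolution on the compact interval $[-T,0]$ in the relevant weighted norms, which you have not established. The conjugation route of Remark~\ref{rem1}, which the paper takes, sidesteps this direct verification altogether.
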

\begin{proof}
By Remark \ref{rem1}, we need only to deal with the positive time $(0,+\infty)$.
Applying Theorem \ref{Rapid decay data}, the key point is to show that condition (\ref{eq26}) is satisfied for certain special choice of admissible pair $(\mu,\nu)$.

Here we fixed $\mu_{0}=\frac{4(d+2)(\alpha+2)}{\alpha d^{2}}$ and $\nu_{0}=\frac{(d+2)(\alpha+2)}{\alpha+d+2}$ to prove (\ref{eq26}). It's obvious that such choice of $(\mu_{0},\nu_{0})$ satisfies the conditions in Theorem \ref{Rapid decay data}, and $\widetilde{u}_{b,0}=e^{i\Delta}(e^{i\frac{b|x|^{2}}{4}}\varphi)\in H^{2}\cap \mathcal{F}(H^{2})$ for any $b\in\mathbb{R}$ and $\varphi\in H^{2}\cap \mathcal{F}(H^{2})$. A direct calculation, based on the explicit kernel of the Schr\"{o}dinger group(refer to \cite{C2} for a review) shows that
\begin{equation}\label{eq37}
    [e^{it\Delta}(e^{i\frac{b|x|^{2}}{4}}\varphi)](x)=e^{i\frac{b|x|^{2}}{4(1+bt)}}[D_{\frac{1}{1+bt}}e^{i\frac{t}{1+bt}\Delta}\varphi](x),
\end{equation}
where the dilation operator $D_{\beta}$, $\beta>0$, is defined by $D_{\beta}\omega(x)=\beta^{\frac{d}{2}}\omega(\beta x)$. Note that by (\ref{eq37}) and simple calculations, we get
\begin{eqnarray}
% \nonumber to remove numbering (before each equation)
  &&\sup_{0\leq t<\infty}(t+1)^{\frac{2}{\mu_{0}}}\{\|e^{it\Delta}[(x+2i\nabla)\widetilde{u}_{b,0}]\|_{L^{\nu_{0}}}+
  \|e^{it\Delta}\widetilde{u}_{b,0}\|_{L^{\nu_{0}}}\} \\
  \nonumber &=&\sup_{1\leq t<\infty}t^{\frac{2}{\mu_{0}}}\{\|e^{it\Delta}[e^{i\frac{b|x|^{2}}{4}}(x\varphi)]\|_{L^{\nu_{0}}}+
  \|e^{it\Delta}(e^{i\frac{b|x|^{2}}{4}}\varphi)\|_{L^{\nu_{0}}}\} \\
  \nonumber &=&\sup_{1\leq t<\infty}\frac{t^{\frac{2}{\mu_{0}}}}{(bt+1)^{\frac{2}{\mu_{0}}}}\{\|e^{i\frac{t}{1+bt}\Delta}(x\varphi)\|_{L^{\nu_{0}}}+
  \|e^{i\frac{t}{1+bt}\Delta}\varphi\|_{L^{\nu_{0}}}\}\\
  \nonumber &\leq&\sup_{1\leq t<\infty}\frac{t^{\frac{2}{\mu_{0}}}}{(bt+1)^{\frac{2}{\mu_{0}}}}C\|\varphi\|_{H^{2}\cap \mathcal{F}(H^{2})}\leq Cb^{-\frac{2}{\mu_{0}}}\longrightarrow 0,
\end{eqnarray}
as $b\rightarrow\infty$. Therefore, there exists a $0<b_{0}<\infty$ such that, if $b\geq b_{0}$, then initial data $\widetilde{u}_{b,0}$ satisfies the condition (\ref{eq26}). The result now follows from Theorem \ref{Rapid decay data}.
\end{proof}
Applying Theorem \ref{rapidly decaying} and Corollary \ref{Oscillating data}, we now characterize the sets $\mathcal{R}^{\pm}$ in the case $\lambda>0$, $\alpha=\alpha(d)$ in terms of rapidly decaying solutions. Our result extends the work of Cazenave and Weissler in \cite{C1} (see Theorem 4.12 therein), which is devoted to the case $\alpha>\alpha(d)$.
\begin{thm}\label{characterization of R}
Assume $\lambda>0$ and $\alpha=\alpha(d)=\frac{2-d+\sqrt{d^{2}+12d+4}}{2d}$, and let $u_{\varphi}$ be the corresponding solution of (\ref{eq1}) with initial data $u(0)=\varphi$. If $d\geq1$, $d\neq2$, $\mathcal{R}_{+}=\{\varphi\in\Sigma; \,\, u_{\varphi} \,\, has \,\, rapid \,\, decay \,\, as \,\, t\rightarrow \infty\}$ and $\mathcal{R}_{-}=\{\varphi\in\Sigma; \,\, u_{\varphi} \,\, has \,\, rapid \,\, decay \,\, as \,\, t\rightarrow -\infty\}$. Moreover, for $d\geq 1$, $\mathcal{R}_{\pm}$ are unbounded subsets of $L^{2}(\mathbb{R}^{d})$.
\end{thm}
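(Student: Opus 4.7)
The theorem consists of a set equality valid for $d \geq 1$, $d \neq 2$, and an $L^{2}$-unboundedness claim valid for all $d \geq 1$. Since complex conjugation exchanges $\mathcal{R}_{+}$ and $\mathcal{R}_{-}$ by Remark \ref{rem1}, I will argue only for $\mathcal{R}_{+}$; the same reasoning gives $\mathcal{R}_{-}$. The inclusion $\{\varphi : u_{\varphi} \text{ rapidly decaying as } t \to \infty\} \subset \mathcal{R}_{+}$ is exactly Theorem \ref{rapidly decaying} (this is where $d \neq 2$ enters), so the substance lies in the reverse inclusion.

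To prove $\mathcal{R}_{+} \subset \{u_{\varphi} \text{ rapidly decaying}\}$, I would fix $\varphi \in \mathcal{R}_{+}$ and decompose
\[
u(t) \;=\; e^{it\Delta}u^{+} \;+\; e^{it\Delta}\bigl(e^{-it\Delta}u(t)-u^{+}\bigr),
\]
applying the pointwise dispersive estimate in $L^{\alpha+2}$ to each piece. The critical observation is that at $\alpha = \alpha(d)$, the defining relation $d\alpha^{2}+(d-2)\alpha=4$ rearranges to $4-(d-2)\alpha = d\alpha^{2}$, whence
\[
\frac{1}{a} \;=\; \frac{4-(d-2)\alpha}{2\alpha(\alpha+2)} \;=\; \frac{d\alpha}{2(\alpha+2)},
\]
so the weak-$L^{a}$ exponent matches exactly the dispersive rate of $e^{it\Delta}$ from $L^{(\alpha+2)/(\alpha+1)}$ to $L^{\alpha+2}$. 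A Cauchy--Schwarz interpolation against the weight $(1+|x|^{2})^{-1/2}$ yields $\Sigma \hookrightarrow L^{(\alpha+2)/(\alpha+1)}$ in every dimension of interest (the condition $(\alpha+2)/(\alpha+1)>2d/(d+2)$ is equivalent to $\alpha<4/(d-2)$ for $d\geq 3$, and is verified directly for $d=1$). Consequently $\|e^{it\Delta}u^{+}\|_{L^{\alpha+2}} = O(t^{-1/a})$, while the remainder satisfies $\|u(t)-e^{it\Delta}u^{+}\|_{L^{\alpha+2}} = o(t^{-1/a})$ because $e^{-it\Delta}u(t)-u^{+}\to 0$ in $\Sigma$ and hence in $L^{(\alpha+2)/(\alpha+1)}$. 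Combined with the continuity of $u$ in $\Sigma$ on $[0,1]$, this gives $\|u(t)\|_{L^{\alpha+2}}\leq C(1+t)^{-1/a}$, and a one-line distribution-function check shows any such pointwise bound places $u$ in $L^{a,\infty}((0,\infty), L^{\alpha+2})$.

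For the unboundedness, I will invoke Corollary \ref{Oscillating data}, which holds in every dimension $d\geq 1$. For any $\varphi \in H^{2}\cap \mathcal{F}(H^{2})\subset \Sigma$, choosing $b \geq b_{0}(\varphi)$ yields $\widetilde{u}_{b,0}=e^{i\Delta}(e^{ib|x|^{2}/4}\varphi)\in \mathcal{R}_{+}$. Since $e^{i\Delta}$ is an $L^{2}$-isometry and $|e^{ib|x|^{2}/4}|\equiv 1$, one has $\|\widetilde{u}_{b,0}\|_{L^{2}}=\|\varphi\|_{L^{2}}$ independently of $b$; letting $\|\varphi\|_{L^{2}}\to\infty$ within $H^{2}\cap \mathcal{F}(H^{2})$ exhibits elements of $\mathcal{R}_{+}$ of arbitrarily large $L^{2}$-norm, and the symmetric choice $b\leq -b_{0}(\varphi)$ handles $\mathcal{R}_{-}$.

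The main obstacle is the reverse-inclusion step of the characterization: the entire argument rests on the precise coincidence of the dispersive rate with the exponent $1/a$ at the critical power $\alpha=\alpha(d)$, and on promoting mere $\Sigma$-convergence of $e^{-it\Delta}u(t)$ to $u^{+}$ into the sharper $o$-decay of the remainder in $L^{(\alpha+2)/(\alpha+1)}$. Once this is in place, the other inclusion and the $L^{2}$-unboundedness are immediate applications of Theorem \ref{rapidly decaying} and Corollary \ref{Oscillating data}, respectively.
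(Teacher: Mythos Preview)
Your proof is correct, and for the forward inclusion and the $L^{2}$-unboundedness it coincides with the paper. For the reverse inclusion $\mathcal{R}_{+}\subset\{\text{rapidly decaying}\}$, however, the paper takes a shorter route than your dispersive decomposition. Rather than splitting $u(t)$ into $e^{it\Delta}u^{+}$ plus a remainder and invoking the embedding $\Sigma\hookrightarrow L^{(\alpha+2)/(\alpha+1)}$, the paper uses directly that $\varphi\in\mathcal{R}_{+}$ forces $\|(x+2it\nabla)u_{\varphi}(t)\|_{L^{2}}=\|x\,e^{-it\Delta}u_{\varphi}(t)\|_{L^{2}}$ to be bounded (by the commutation identity (\ref{P3}) and $\Sigma$-convergence to $u^{+}$), and then applies Gagliardo--Nirenberg to $e^{-i|x|^{2}/(4t)}u_{\varphi}$ via (\ref{P1}) to obtain $\|u_{\varphi}(t)\|_{L^{\alpha+2}}\leq C\,t^{-\alpha d/(2(\alpha+2))}=C\,t^{-1/a}$. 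Both arguments land on the same pointwise decay and hence the same weak-$L^{a}$ bound; the paper's version is a bit more economical since it needs only the single bounded quantity $\|P_{t}u\|_{L^{2}}$, while your version has the advantage of making the asymptotic free-solution structure explicit and of isolating exactly where the critical identity $1/a=d\alpha/(2(\alpha+2))$ is used.
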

\begin{proof}
By Remark \ref{rem1}, we need only show the result for $\mathcal{R}_{+}$. First, if $\varphi\in\mathcal{R}_{+}$, then
\[\|(x+2it\nabla)u_{\varphi}\|_{L^{2}}\leq C,\]
and hence by Gagliardo-Nirenberg's inequality, we have
\[\|u_{\varphi}(t)\|_{L^{\alpha+2}}\leq C(t^{-1}\|(x+2it\nabla)u_{\varphi}\|_{L^{2}})^{-\frac{\alpha d}{2(\alpha+2)}}\leq Ct^{-\frac{\alpha d}{2(\alpha+2)}}.\]
Therefore $\|u_{\varphi}\|_{L^{a,\infty}((0,\infty),L^{\alpha+2})}<\infty$, that is, $u$ has rapid decay as $t\rightarrow \infty$(see Definition \ref{RDS}). Conversely, if $\varphi\in\Sigma$ be such that $u_{\varphi}$ has rapid decay as $t\rightarrow \infty$, then by Theorem \ref{rapidly decaying}, for $d\geq1$, $d\neq2$, the scattering state exists in $\Sigma$ at $+\infty$, i.e., $\varphi\in\mathcal{R}_{+}$. This closes the proof for the first assertion.

Now consider arbitrary $\psi\in H^{2}\cap \mathcal{F}(H^{2})$, and let $\widetilde{u}_{b,0}=e^{i\Delta}(e^{i\frac{b|x|^{2}}{4}}\psi)$, it follows from Corollary \ref{Oscillating data} that $\widetilde{u}_{b,0}\in\mathcal{R}_{+}$, for $b$ large enough. Note that $\|\widetilde{u}_{b,0}\|_{L^{2}}=\|\psi\|_{L^{2}}$, it follows that $\mathcal{R}_{+}$ is unbounded subsets of $L^{2}(\mathbb{R}^{d})$.
\end{proof}
Next, We will investigate the scattering theory in $\Sigma$ for focusing NLS with $\alpha\geq\frac{4}{d}$ and initial data $u_{0}\in\Sigma$ below a mass-energy threshold and satisfying an mass-gradient bound. For the $H^{1}$ scattering results, see \cite{D1,F1,H1}.
\begin{thm}\label{mass supercritical}
Assume $\lambda>0$, $d\geq1$, $\frac{4}{d}\leq\alpha<\frac{4}{d-2}$($\alpha\in [\frac{4}{d}, \infty)$, if $d=1,2$). If initial data $u_{0}\in\Sigma$ satisfies the following assumptions:
\begin{equation}\label{mass}
    \|u_{0}\|_{L^{2}}<\lambda^{-\frac{1}{\alpha}}\|Q\|_{L^{2}}, \,\,\, for \,\,\, \alpha=\frac{4}{d},
\end{equation}
or
\begin{equation}\label{mass super1}
    M[u_{0}]^{\sigma}E[u_{0}]<\lambda^{-2\tau}M[Q]^{\sigma}E[Q],
\end{equation}
\begin{equation}\label{mass super2}
    \|u_{0}\|_{L^{2}}^{\sigma}\|\nabla u_{0}\|_{L^{2}}<\lambda^{-\tau}\|Q\|_{L^{2}}^{\sigma}\|\nabla Q\|_{L^{2}}, \,\,\, for \,\,\, \alpha>\frac{4}{d},
\end{equation}
then scattering states $u^{\pm}$ exist in $\Sigma$ at $\pm\infty$, where $\sigma=\frac{4-(d-2)\alpha}{\alpha d-4}$, $\tau=\frac{2}{\alpha d-4}$ and $Q$ is the ground state solution to $-\Delta Q+Q=|Q|^{\alpha}Q$.
\end{thm}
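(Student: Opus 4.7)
The plan is to reduce the $\Sigma$-scattering statement to two ingredients: (i) the existing $H^{1}$-scattering theory for focusing NLS below the mass-energy threshold, and (ii) a global Strichartz bound on the operator $P_{t}u=(x+2it\nabla)u$, following the blueprint of the proofs of Theorem \ref{small data} and Theorem \ref{Rapid decay data}. Under hypothesis (\ref{mass}) in the mass-critical case $\alpha=4/d$, Dodson's theorem applies; under hypotheses (\ref{mass super1})--(\ref{mass super2}) in the mass-supercritical case $4/d<\alpha<4/(d-2)$, the theorems of Kenig--Merle, Killip--Visan, Holmer--Roudenko, Duyckaerts--Holmer--Roudenko, and Fang--Xie--Cazenave cited in the introduction apply. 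In both cases one obtains that $u$ is global and uniformly bounded in $H^{1}(\mathbb{R}^{d})$ together with the spacetime Strichartz bound
\begin{equation*}
\|u\|_{L^{q}(\mathbb{R},L^{\alpha+2})}<\infty
\end{equation*}
for an appropriate admissible pair $(q,\alpha+2)$, which upgrades to $u\in L^{q}(\mathbb{R},W^{1,\alpha+2})$ by a standard Strichartz bootstrap.

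Next I would apply the operator $P_{\tau}$ to Duhamel's formula (\ref{eq2}). The commutation identity (\ref{P3}) combined with Strichartz estimates yields, for every $0\leq T\leq t$,
\begin{equation*}
\|P_{\tau}u\|_{L^{q}((T,t),L^{\alpha+2})}\leq C\|P_{T}u(T)\|_{L^{2}}+C\|P_{\tau}(|u|^{\alpha}u)\|_{L^{q'}((T,t),L^{(\alpha+2)'})}.
\end{equation*}
The pointwise product inequality (\ref{P2}) bounds $\|P_{\tau}(|u|^{\alpha}u)\|_{L^{(\alpha+2)'}}$ by $C\|u(\tau)\|_{L^{\alpha+2}}^{\alpha}\|P_{\tau}u(\tau)\|_{L^{\alpha+2}}$, so that H\"{o}lder in time estimates the nonlinear term by $C\|u\|_{L^{q}((T,t),L^{\alpha+2})}^{\alpha}\|P_{\tau}u\|_{L^{q}((T,t),L^{\alpha+2})}$. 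Choosing $T$ large enough that $C\|u\|_{L^{q}((T,\infty),L^{\alpha+2})}^{\alpha}<1/2$ allows absorption into the left-hand side, and together with the local bound on $(0,T)$ provided by $u\in C(\mathbb{R},\Sigma)$ this gives the global estimate $\|P_{\tau}u\|_{L^{q}(\mathbb{R},L^{\alpha+2})}+\|Pu\|_{L^{\infty}(\mathbb{R},L^{2})}<\infty$.

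With $u\in L^{q}(\mathbb{R},W^{1,\alpha+2})$ and $Pu\in L^{q}(\mathbb{R},L^{\alpha+2})$ in hand, a final application of Strichartz to differences yields
\begin{equation*}
\|e^{-it\Delta}u(t)-e^{-is\Delta}u(s)\|_{H^{1}}\leq C\||u|^{\alpha}u\|_{L^{q'}((t,s),W^{1,(\alpha+2)'})}
\end{equation*}
and
\begin{equation*}
\|x(e^{-it\Delta}u(t)-e^{-is\Delta}u(s))\|_{L^{2}}\leq C\|P_{\tau}(|u|^{\alpha}u)\|_{L^{q'}((t,s),L^{(\alpha+2)'})},
\end{equation*}
both right-hand sides tending to zero as $t,s\to\pm\infty$. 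This produces scattering states $u^{\pm}\in\Sigma$, exactly as in the closing argument of Theorem \ref{small data}.

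The main obstacle I anticipate is the mass-critical case $\alpha=4/d$: there the admissible pair $(q,\alpha+2)=(2+4/d,2+4/d)$ is scaling-critical and the Leibniz-type bound (\ref{P2}) is dimensionally critical as well, so the absorption step in the $P_{\tau}u$ bootstrap is tight and must be executed on a finite chain of sub-intervals on which the local $L^{q}L^{\alpha+2}$ norm of $u$ is sufficiently small, then concatenated. A secondary technical point is to verify that the maximal $\Sigma$-solution coincides with the solution produced by the $H^{1}$-theory, i.e.\ persistence of the weight $|x|$ against the Duhamel iteration; this follows from local $\Sigma$-well-posedness and uniqueness, but one must check that the hypotheses (\ref{mass})--(\ref{mass super2}), being conditions on $u_{0}\in\Sigma\subset H^{1}$, correctly trigger the $H^{1}$ scattering theorems without loss of structure.
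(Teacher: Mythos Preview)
Your proposal is essentially correct, but it takes a genuinely different route from the paper's own proof.

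You black-box the deep $H^{1}$-scattering theorems (Dodson in the mass-critical case; Holmer--Roudenko, Duyckaerts--Holmer--Roudenko, Fang--Xie--Cazenave in the mass-supercritical case) to obtain a global Strichartz bound on $u$, and then run the $P_{t}$-bootstrap exactly as in Theorem~\ref{small data}. The paper, by contrast, is entirely self-contained and never invokes those concentration-compactness results. For $\alpha=4/d$ it argues by contradiction: the exact pseudo-conformal conservation law $8t^{2}E[v(t)]=\|xu_{0}\|_{L^{2}}^{2}$ (with $v=e^{-i|x|^{2}/4t}u$) together with the sharp Gagliardo--Nirenberg inequality forces $\|u(t)\|_{L^{\alpha+2}}^{\alpha+2}\le Ct^{-2}$, which is incompatible with the lower bound of Proposition~\ref{blowup1} for a non-scattering solution. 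For $\alpha>4/d$ it combines the pseudo-conformal law, the bound $\|\nabla v(t)\|_{L^{2}}^{2}<\|xu_{0}\|_{L^{2}}^{2}/(4t^{2})+\|\nabla u(t)\|_{L^{2}}^{2}$, and sharp Gagliardo--Nirenberg to derive a differential inequality for $t^{2}\|u(t)\|_{L^{\alpha+2}}^{\alpha+2}$, whence Gronwall gives $\|u(t)\|_{L^{\alpha+2}}^{\alpha+2}\le Ct^{-2(1-\eta_{0}^{(\alpha d-4)/8})}$ with $\eta_{0}=M[u_{0}]^{\sigma}E[u_{0}]/(M[Q]^{\sigma}E[Q])<1$; this decay then feeds the standard Strichartz/$P_{t}$ bootstrap. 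In short, the paper extracts the decay from the $\Sigma$-structure itself via elementary ODE-type arguments, whereas you import it from heavy external machinery. Your route is shorter and more modular; the paper's route is elementary, avoids Dodson's theorem entirely (which the paper does not even cite), and illustrates that the weighted setting already contains enough rigidity to force scattering.

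One small technical slip: your H\"older-in-time step $\|u\|_{L^{q}}^{\alpha}\|P_{\tau}u\|_{L^{q}}$ only balances when $\alpha=4/d$ (then $q=a=\alpha+2$). For $\alpha>4/d$ the correct factor is $\|u\|_{L^{a}((T,t),L^{\alpha+2})}^{\alpha}$ with $a=\frac{2\alpha(\alpha+2)}{4-\alpha(d-2)}>q$, which you still control by interpolating $L^{q}\cap L^{\infty}$ from the $H^{1}$-scattering output; so the argument survives with this adjustment.
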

\begin{proof}
Note that the conditions in Theorem \ref{mass supercritical} is invariant by taking complex conjugation, so by Remark \ref{rem1}, we need only prove the results for positive time $t\rightarrow+\infty$. Throughout our proof, we define \[v(t,x)=e^{-i\frac{|x|^{2}}{4t}}u(t,x),\]
where $u(t,x)$ is the corresponding maximal $H^{1}$ solution of (\ref{eq1}).
We consider separately the cases $\alpha=\frac{4}{d}$ and $\alpha>\frac{4}{d}$. \\
$\mathrm{Case} \,\,\, \alpha=\frac{4}{d}$. It has been proved by M. Weinstein in \cite{W1} that the maximal $H^{1}$ solution of (\ref{eq1}) is global and bounded in $H^{1}(\mathbb{R}^{d})$. To prove the scattering results, we argue by contradiction and assume that $u$ doesn't scatter at $+\infty$. Then by Proposition \ref{blowup1}, we have for all $t\in(0,+\infty)$,
 \begin{equation}\label{eq38}
    \|u(t)\|_{L^{\alpha+2}}\geq C(1+|t|)^{-\frac{2(1-\theta)}{\alpha+2}},
\end{equation}
where $\theta>0$ is defined the same as in Lemma \ref{blowup0}.
Note that the conservation of mass and Gagliardo-Nirenberg's inequality imply that
\begin{equation}\label{eq39}
    \frac{1}{2}\|\nabla v(t)\|_{L^{2}}^{2}\leq E[v(t)]+\frac{\lambda}{\alpha+2}C_{GN}\|\nabla v(t)\|_{L^{2}}^{2}\|u_{0}\|_{L^{2}}^{\alpha},
\end{equation}
and the best constant $C_{GN}=\frac{\alpha+2}{2}\|Q\|_{L^{2}}^{-\alpha}$(see Corollary 2.1 in \cite{W1}), where $Q$ is the ground state solution to $-\Delta Q+Q=|Q|^{\alpha}Q$. Thus we can deduce from (\ref{mass}) and (\ref{eq39}) that
\begin{equation}\label{eq40}
    \|\nabla v(t)\|_{L^{2}}^{2}\leq CE[v(t)].
\end{equation}
Since $\alpha=\frac{4}{d}$, the pseudo-conformal conservation law (\ref{conformal-conservation3}) becomes a precise conservation law
\begin{equation}\label{eq41}
    8t^{2}E[v(t)]=\|xu_{0}\|_{L^{2}}^{2},
\end{equation}
combined with (\ref{eq40}), we get
\[\|u(t)\|_{L^{\alpha+2}}^{\alpha+2}=\|v(t)\|_{L^{\alpha+2}}^{\alpha+2}\leq CE[v(t)]\|u_{0}\|_{L^{2}}^{\alpha}\leq Ct^{-2} \,\,\,\,\, for \,\, all \,\,\, t\in\mathbb{R},\]
which contradicts (\ref{eq38}). This closes our proof for $\alpha=\frac{4}{d}$. \\
$\mathrm{Case} \,\,\, \alpha>\frac{4}{d}$. By scaling, we may assume $\lambda=1$. We first recall some basic properties of $Q$, the ground state of $-\Delta Q+Q=|Q|^{\alpha}Q$, by Pohozaev's identity(see e.g. Corollary 8.1.3 in \cite{C2}),
\begin{equation}\label{ground state}
    \|Q\|_{L^{2}}^{2}=\frac{4-(d-2)\alpha}{\alpha d}\|\nabla Q\|_{L^{2}}^{2}=\frac{4-(d-2)\alpha}{2(\alpha+2)}\|Q\|_{L^{\alpha+2}}^{\alpha+2}.
\end{equation}
Using (\ref{ground state}), the best constant in the Gagliardo-Nirenberg's inequality
\begin{equation}\label{GN}
    \|\omega\|_{L^{\alpha+2}}^{\alpha+2}\leq C_{GN}\|\omega\|_{L^{2}}^{\frac{4-(d-2)\alpha}{2}}\|\nabla \omega\|_{L^{2}}^{\frac{\alpha d}{2}}
\end{equation}
can be given by
\begin{equation}\label{best constant}
    C_{GN}=\frac{2(\alpha+2)}{\alpha d}[\|Q\|_{L^{2}}^{\sigma}\|\nabla Q\|_{L^{2}}]^{-\frac{\alpha d-4}{2}}.
\end{equation}
Define $f(x)=\frac{1}{2}x^{2}-\frac{C_{GN}}{\alpha+2}x^{\frac{\alpha d}{2}}$ for $x\geq 0$. It follows from Gagliardo-Nirenberg's inequality (\ref{GN}), energy conservation and (\ref{mass super1})that
\begin{equation}\label{eq42}
    f(\|u_{0}\|_{L^{2}}^{\sigma}\|\nabla u(t)\|_{L^{2}})\leq M[u_{0}]^{\sigma}E[u_{0}]<M[Q]^{\sigma}E[Q],
\end{equation}
and note that if we take $\omega=Q$, then we get equality in (\ref{GN}), we infer
\begin{equation}\label{eq43}
    f(\|u_{0}\|_{L^{2}}^{\sigma}\|\nabla u(t)\|_{L^{2}})<f(\|Q\|_{L^{2}}^{\sigma}\|\nabla Q\|_{L^{2}}).
\end{equation}
Since $\|Q\|_{L^{2}}^{\sigma}\|\nabla Q\|_{L^{2}}$ is a local maximum point of $f$, we deduce from the continuity of $\|\nabla u(t)\|_{L^{2}}$ in $t$, the initial mass-gradient bound (\ref{mass super2}) and (\ref{eq43}) that
\begin{equation}\label{eq44}
    \|u_{0}\|_{L^{2}}^{\sigma}\|\nabla u(t)\|_{L^{2}}<\|Q\|_{L^{2}}^{\sigma}\|\nabla Q\|_{L^{2}} \,\,\, for \,\,\, all \,\,\, t\in[0,T_{max}),
\end{equation}
which implies the maximal solution $u$ is positively global in time and bounded in $H^{1}(\mathbb{R}^{d})$. Thus we deduce from the Gagliardo-Nirenberg's inequality that
\begin{equation}\label{eq45}
    \|\nabla u(t)\|_{L^{2}}^{2}<\frac{2\alpha d}{\alpha d-4}E[u] \,\,\,\,\,\, for \,\, all \,\,\, t\in[0,+\infty).
\end{equation}
Define a constant $\eta_{0}>0$ by $\eta_{0}=\eta_{0}(u_{0})=\frac{M[u_{0}]^{\sigma}E[u_{0}]}{M[Q]^{\sigma}E[Q]}$, it follows form (\ref{mass super1}) that $\eta_{0}<1$. Using Pohozaev's identity (\ref{ground state}) and (\ref{eq45}), we get
\begin{equation}\label{eq46}
    \|u_{0}\|_{L^{2}}^{\sigma}\|\nabla u(t)\|_{L^{2}}<\sqrt{\frac{2\alpha d}{\alpha d-4}}(M[u]^{\sigma}E[u])^{\frac{1}{2}}<\eta_{0}^{\frac{1}{2}}\|Q\|_{L^{2}}^{\sigma}\|\nabla Q\|_{L^{2}}.
\end{equation}
Note that by the pseudo-conformal conservation law(\ref{conformal-conservation3}), energy conservation and (\ref{eq45}) we have
\begin{eqnarray}\label{important}
% \nonumber to remove numbering (before each equation)
   \nonumber \|\nabla v(t)\|_{L^{2}}^{2}&=&\frac{\|xu_{0}\|_{L^{2}}^{2}}{4t^{2}}+\|\nabla u(t)\|_{L^{2}}^{2}-2E[u]+\frac{\alpha d-4}{(\alpha+2)t^{2}}\int_{0}^{t}\tau\|u(\tau)\|_{L^{\alpha+2}}^{\alpha+2}d\tau \\
  &<&\frac{\|xu_{0}\|_{L^{2}}^{2}}{4t^{2}}+\|\nabla u(t)\|_{L^{2}}^{2}-2E[u]+\frac{4}{t^{2}}\int_{0}^{t}\tau E[u]d\tau  \\
  \nonumber &=&\frac{\|xu_{0}\|_{L^{2}}^{2}}{4t^{2}}+\|\nabla u(t)\|_{L^{2}}^{2}.
\end{eqnarray}
Therefore, we deduce easily from (\ref{important}) that,
\begin{equation}\label{eq47}
    \|\nabla v(t)\|_{L^{2}}^{2}<\eta_{0}^{-\frac{1}{2}}\|\nabla u\|_{L^{2}}^{2}
\end{equation}
for all $t\geq t_{0}=\frac{\|xu_{0}\|_{L^{2}}^{2}}{\sqrt{8E[u_{0}]}}[(\frac{M[Q]^{\sigma}E[Q]}
{M[u_{0}]^{\sigma}E[u_{0}]})^{^{1/2}}-1]^{-\frac{1}{2}}$. In view of (\ref{eq46}) and (\ref{eq47}), we get immediately
\begin{equation}\label{eq48}
    \|u_{0}\|_{L^{2}}^{\sigma}\|\nabla v(t)\|_{L^{2}}^{2}<\eta_{0}^{\frac{1}{4}}\|Q\|_{L^{2}}^{\sigma}\|\nabla Q\|_{L^{2}}
\end{equation}
for all $t\geq t_{0}$. Applying Gagliardo-Nirenberg inequality (\ref{GN}) and (\ref{eq48}), we get
\begin{equation}\label{eq49}
    \|u(t)\|_{L^{\alpha+2}}^{\alpha+2}<\frac{2(\alpha+2)}{\alpha d}\eta_{0}^{\frac{\alpha d-4}{8}}\|\nabla v(t)\|_{L^{2}}^{2}<\frac{4(\alpha+2)}{\alpha d-4}\eta_{0}^{\frac{\alpha d-4}{8}}E[v(t)]
\end{equation}
for all $t\geq t_{0}$. Note that $0<\eta_{0}=\frac{M[u_{0}]^{\sigma}E[u_{0}]}{M[Q]^{\sigma}E[Q]}<1$, by the pseudo-conformal conservation law(\ref{conformal-conservation3}), we have
\begin{eqnarray}\label{eq50}
% \nonumber to remove numbering (before each equation)
 \nonumber t^{2}\|u(t)\|_{L^{\alpha+2}}^{\alpha+2}&<&\frac{4(\alpha+2)}{\alpha d-4}\eta_{0}^{\frac{\alpha d-4}{8}}t^{2}E[v(t)]\\
 &\leq& \frac{4(\alpha+2)}{\alpha d-4}t_{0}^{2}E[v(t_{0})]+2\eta_{0}^{\frac{\alpha d-4}{8}}\int_{t_{0}}^{t}\tau\|u(\tau)\|_{L^{\alpha+2}}^{\alpha+2}d\tau
\end{eqnarray}
for all $t\geq t_{0}$, therefore we deduce from Gronwall's inequality that
\begin{equation}\label{eq51}
   \|u(t)\|_{L^{\alpha+2}}^{\alpha+2}\leq Ct^{-2(1-\eta_{0}^{\frac{\alpha d-4}{8}})}
\end{equation}
for all $t\geq t_{0}$. Note that $0<\eta_{0}<1$, (\ref{eq51}) implies that
\begin{equation}\label{eq52}
    \|u(t)\|_{L^{\alpha+2}}\rightarrow0,
\end{equation}
as $t\rightarrow+\infty$. Since $\alpha>\frac{4}{d}$, by Strichartz's estimates and continuation arguments, one can easily obtain that
\begin{equation}\label{eq53}
    u\in L^{q}((0,\infty),W^{1,r}(\mathbb{R}^{d})), \,\,\,\,\,\, P_{t}u=(x+2it\nabla)u\in L^{q}((0,\infty),L^{r}(\mathbb{R}^{d}))
\end{equation}
for every admissible pair $(q,r)$, we omit the details here(see e.g. Theorem 7.7.3 in \cite{C2}, and following the proof of this theorem with $P_{t}u$ instead of $u$). In particular, by identity (\ref{PC-T2}), $u\in L^{\infty}((0,\infty),H^{1})$ and $P_{t}u\in L^{\infty}((0,\infty),L^{2})$ implies that
\begin{equation}\label{eq54}
    \sup_{s\in[0,1)}\|v(s)\|_{H^{1}}<\infty,
\end{equation}
where $v(s)$ is the pseudo-conformal transformation of $u(t)$. Thus by Theorem \ref{local existence} and Proposition \ref{equivalent}, scattering state $u^{+}$ exists in $\Sigma$ at $+\infty$.
\end{proof}

\section{Convergence of scattering solution to a free solution}
Finally we study the asymptotic behavior of $\|u(t)-e^{it\Delta}u^{\pm}\|_{\Sigma}$ under the assumption $u^{\pm}$ exist at $\pm\infty$. In general, since $e^{it\Delta}$ is not an isometry of $\Sigma$, it is not known whether we can deduce $\|u(t)-e^{it\Delta}u^{\pm}\|_{\Sigma}\rightarrow 0$ from the scattering asymptotic property $\|e^{-it\Delta}u(t)-u^{\pm}\|_{\Sigma}\rightarrow 0$. A positive answer has been given by B\'{e}gout \cite{B1} for $d\leq 2$, $\alpha>\frac{4}{d}$, and $3\leq d\leq 5$, $\alpha>\frac{8}{d+2}$. Our results in this section extend this work to spatial dimension $d\leq 9$ and wider admissible range of $\alpha$ under certain suitable assumption on initial data $u_{0}$.
\begin{lem}\label{priori estimate}
Assume $\lambda\in\mathbb{R}\setminus \{0\}$, $\frac{2}{d}<\alpha<\frac{4}{d-2}$($\frac{2}{d}<\alpha<\infty$, if $d=1,2$), and $u_{0}\in\mathcal{R}_{\pm}$, $u\in C(\mathbb{R},\Sigma)$ is the corresponding global solution of (\ref{eq1}). Then for any admissible pair $(q,r)$, we have if furthermore $\alpha>\frac{4}{d+2}$, then
\[u\in L^{q}(\mathbb{R},W^{1,r}(\mathbb{R}^{d})).\]
\end{lem}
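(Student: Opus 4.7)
The hypothesis $u_{0}\in\mathcal{R}_{\pm}$ is a compactness/convergence statement, and the key is to turn it into a pointwise-in-time decay estimate for $\|u(t)\|_{L^{r}}$ that is strong enough to close a Strichartz continuation argument. The proof proceeds in three steps: a uniform $L^{2}$-bound on $P_{t}u$, Gagliardo--Nirenberg decay of $\|u(t)\|_{L^{r}}$, and a Strichartz bootstrap for which the threshold $\alpha>\frac{4}{d+2}$ is exactly what is needed.

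\textbf{Step 1 (uniform bound on $P_{t}u$).} By definition of $\mathcal{R}_{\pm}$, $e^{-it\Delta}u(t)\to u^{\pm}$ strongly in $\Sigma$, so in particular $x\,e^{-it\Delta}u(t)\to xu^{\pm}$ in $L^{2}$. The intertwining relation (\ref{P3}) gives $P_{t}u(t)=e^{it\Delta}\bigl(x\,e^{-it\Delta}u(t)\bigr)$, and combined with the $L^{2}$-isometry of $e^{it\Delta}$ this yields $\sup_{t\in\mathbb{R}}\|P_{t}u(t)\|_{L^{2}}<\infty$.

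\textbf{Step 2 (pointwise decay).} Using (\ref{P1}), set $v(t,x)=e^{-i|x|^{2}/(4t)}u(t,x)$. Mass conservation gives $\|v(t)\|_{L^{2}}=\|u_{0}\|_{L^{2}}$, and Step~1 yields $\|\nabla v(t)\|_{L^{2}}=\tfrac{1}{2|t|}\|P_{t}u(t)\|_{L^{2}}\le C/|t|$ for $|t|\ge 1$. Gagliardo--Nirenberg then gives, for every admissible pair $(q,r)$ with $r>2$,
\[
\|u(t)\|_{L^{r}}=\|v(t)\|_{L^{r}}\le C\|v(t)\|_{L^{2}}^{1-\delta(r)}\|\nabla v(t)\|_{L^{2}}^{\delta(r)}\le C(1+|t|)^{-2/q},
\]
since $\delta(r)=d(\tfrac{1}{2}-\tfrac{1}{r})=\tfrac{2}{q}$.

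\textbf{Step 3 (Strichartz bootstrap).} Fix a convenient admissible pair $(\tilde q,\tilde r)$ (for instance the pair $(\gamma,\rho)$ used in the proof of Theorem \ref{small data}), together with $\rho^{\ast}$ such that H\"older's inequality and Sobolev embedding give $\||u|^{\alpha}u\|_{W^{1,\tilde r'}}\le C\|u\|_{L^{\rho^{\ast}}}^{\alpha}\|u\|_{W^{1,\tilde r}}$. Then Strichartz's inequality applied to the Duhamel formula yields
\[
\|u\|_{L^{\tilde q}((T,t),W^{1,\tilde r})}\le C\|u(T)\|_{H^{1}}+C\Bigl(\int_{T}^{t}\|u(\tau)\|_{L^{\rho^{\ast}}}^{\alpha\tilde q/(\tilde q-2)}\,d\tau\Bigr)^{\!(\tilde q-2)/\tilde q}\|u\|_{L^{\tilde q}((T,t),W^{1,\tilde r})}.
\]
Inserting the decay $\|u(\tau)\|_{L^{\rho^{\ast}}}\le C(1+\tau)^{-2/\tilde q}$ from Step~2, the temporal integrand becomes $(1+\tau)^{-2\alpha/(\tilde q-2)}$, which is integrable on $(T,\infty)$ precisely under the condition $\alpha>\frac{4}{d+2}$ (the exponent satisfies $2\alpha/(\tilde q-2)>1$ iff $\alpha>\frac{4}{d+2}$). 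For $T$ large enough the integral can be made smaller than $1/(2C)$, so the nonlinear term is absorbed. Combined with local-in-time Strichartz bounds on $[0,T]$, this gives $u\in L^{\tilde q}(\mathbb{R},W^{1,\tilde r})$, and a second application of Strichartz to the Duhamel formula upgrades this to every admissible pair $(q,r)$.

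\textbf{Main obstacle.} The delicate point is verifying that the auxiliary pair $(\tilde q,\tilde r)$ and the associated $\rho^{\ast}$ can be chosen so that the exponent matching in the time integral yields the divergence threshold exactly at $\alpha=\frac{4}{d+2}$; this is an elementary but tight system of scaling identities, and is the unique place where the hypothesis $\alpha>\frac{4}{d+2}$ is used.
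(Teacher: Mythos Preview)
The paper does not give its own proof of this lemma; it simply cites B\'egout \cite{B1}, Proposition~3.1. Your three-step strategy --- bound $P_{t}u$ in $L^{2}$ via the $\Sigma$-convergence, extract the Gagliardo--Nirenberg decay $\|u(t)\|_{L^{r}}\le C(1+|t|)^{-\delta(r)}$ for $2\le r\le 2^{*}$, then close a Strichartz bootstrap --- is exactly the standard route and is the one B\'egout uses.

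There is, however, a concrete error in your Step~3. The example pair $(\gamma,\rho)$ from Theorem~\ref{small data} has $\rho^{\ast}=\tfrac{d(\alpha+2)}{d-2}>2^{*}$ for every $\alpha>0$, so the decay from Step~2 does \emph{not} cover $\|u(\tau)\|_{L^{\rho^{\ast}}}$; the inequality $\|u(\tau)\|_{L^{\rho^{\ast}}}\le C(1+\tau)^{-2/\tilde q}$ is unjustified. Moreover, even granting that rate, your equivalence ``$2\alpha/(\tilde q-2)>1\iff\alpha>\tfrac{4}{d+2}$'' is false for $\tilde q=\gamma$: e.g.\ for $d=3$ the left-hand condition requires $\alpha>\tfrac{1+\sqrt{17}}{2}\approx 2.56$, not $\alpha>\tfrac{4}{5}$.

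The fix is to put the decay factor at the \emph{endpoint} $p=2^{*}$ (for $d\ge 3$), where Step~2 gives the sharpest rate $\|u(\tau)\|_{L^{2^{*}}}\le C(1+|\tau|)^{-1}$, and take the admissible pair with $\tilde r=\frac{4d}{2d-\alpha(d-2)}$ (this is the $\rho$ of Theorem~\ref{convergence}, not of Theorem~\ref{small data}). Then $\frac{1}{\tilde r'}=\frac{\alpha}{2^{*}}+\frac{1}{\tilde r}$ holds exactly, one may use the same pair on both sides of Strichartz, and the time H\"older exponent $m$ satisfies $\frac{1}{m}=1-\frac{2}{\tilde q}=1-\frac{\alpha(d-2)}{4}$. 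The integrability condition becomes $\alpha m>1$, i.e.\ $\alpha>1-\tfrac{\alpha(d-2)}{4}$, which is precisely $\alpha>\tfrac{4}{d+2}$. For $d=1,2$ one replaces $2^{*}$ by a sufficiently large finite exponent and argues the same way. Your ``Main obstacle'' paragraph correctly anticipated that the exponent bookkeeping is the whole point; you just need the right pair.
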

\begin{proof}
For the proof, refer to B\'{e}gout \cite{B1}, Proposition 3.1.
\end{proof}
\begin{thm}\label{convergence}
Assume $\lambda\in\mathbb{R}\setminus \{0\}$, $d\geq3$, $\frac{2}{d}<\alpha<\frac{4}{d-2}$ and $u_{0}\in\mathcal{R}_{\pm}$, $u\in C(\mathbb{R},\Sigma)$ is the corresponding global solution of (\ref{eq1}) with scattering states $u_{\pm}$ at $\pm\infty$. We assume further that $u_{0}\in\Sigma\cap W^{1,\rho'}$, where $\rho=\frac{4d}{2d-\alpha(d-2)}$, then if $3\leq d\leq9$, $\alpha>\frac{16}{3d+2}$, we have
\[\lim_{t\rightarrow\pm\infty}\|u(t)-e^{it\Delta}u^{\pm}\|_{\Sigma}\rightarrow 0.\]
\end{thm}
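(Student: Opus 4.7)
The proof decouples the $\Sigma$-norm naturally. The $H^{1}$-convergence $\|u(t)-e^{it\Delta}u^{\pm}\|_{H^{1}}\to 0$ is immediate, since $e^{it\Delta}$ is an isometry of $H^{1}$ and $\|e^{-it\Delta}u(t)-u^{\pm}\|_{H^{1}}\to 0$ is part of the scattering hypothesis, so only the weighted piece $\|x(u(t)-e^{it\Delta}u^{\pm})\|_{L^{2}}\to 0$ requires work. Applying the commutation identity $e^{-it\Delta}(x\varphi)=(x-2it\nabla)e^{-it\Delta}\varphi$ (a direct consequence of (\ref{P3})) to $\varphi=u(t)-e^{it\Delta}u^{\pm}$ and using the $L^{2}$-isometry of $e^{-it\Delta}$, I obtain
\[
\|x(u(t)-e^{it\Delta}u^{\pm})\|_{L^{2}}\leq \|x(e^{-it\Delta}u(t)-u^{\pm})\|_{L^{2}}+2|t|\,\|\nabla(e^{-it\Delta}u(t)-u^{\pm})\|_{L^{2}}.
\]
The first term vanishes in the limit by hypothesis, so the whole theorem reduces to the quantitative rate $|t|\,\|\nabla(e^{-it\Delta}u(t)-u^{\pm})\|_{L^{2}}\to 0$.

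I would then differentiate the tail Duhamel identity, $\nabla u^{\pm}-e^{-it\Delta}\nabla u(t)=i\lambda\int_{t}^{\infty}e^{-is\Delta}\nabla(|u|^{\alpha}u)(s)\,ds$, and apply the dual Strichartz estimate with the admissible pair $(q,\rho)$, $q=8/[\alpha(d-2)]$ (whose dual $L^{\rho'}$ is precisely the scale appearing in the hypothesis $u_{0}\in W^{1,\rho'}$), obtaining
\[
\|\nabla(u^{\pm}-e^{-it\Delta}u(t))\|_{L^{2}}\leq C\,\|\nabla(|u|^{\alpha}u)\|_{L^{q'}((t,\infty),L^{\rho'})}.
\]
The elementary inequality $t\leq s$ on $(t,\infty)$ then lets me exchange the outer factor $t$ for $s$ inside, so everything reduces to the global bound $s\,\nabla(|u|^{\alpha}u)\in L^{q'}((0,\infty),L^{\rho'})$, whose tail on $(t,\infty)$ then automatically tends to zero as $t\to\infty$.

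The pointwise backbone is (\ref{P1}): setting $v(s,x)=e^{-i|x|^{2}/(4s)}u(s,x)$, so that $|v|=|u|$ and $2s\,\nabla v = e^{-i|x|^{2}/(4s)}P_{s}u$, and computing $\nabla(|u|^{\alpha}u)=\nabla(e^{i|x|^{2}/(4s)}|v|^{\alpha}v)$, one reads off
\[
|s\,\nabla(|u|^{\alpha}u)|\leq \tfrac{1}{2}|x|\,|u|^{\alpha+1}+C\,|u|^{\alpha}|P_{s}u|.
\]
For the second summand, H\"older in space, the Sobolev bound $\|u(s)\|_{L^{2d/(d-2)}}\leq C\|\nabla u(s)\|_{L^{2}}$, the $H^{1}$-bound, and the $P_{s}u$-analogue of Lemma \ref{priori estimate} (obtained by commuting $P_{s}$ through the equation via (\ref{P0}) and running Strichartz with the nonlinear estimate (\ref{P2})) place it in $L^{q'}_{s}L^{\rho'}_{x}$. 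The first summand is the real obstacle: $\|xu(s)\|_{L^{2}}$ grows linearly in $s$ (because $\|xe^{is\Delta}u^{\pm}\|_{L^{2}}\sim s\|\nabla u^{\pm}\|_{L^{2}}$ and $u$ is asymptotic to $e^{is\Delta}u^{\pm}$), and this growth must be absorbed by extra dispersive decay of $\|u(s)\|_{L^{a}}$ harvested from $u_{0}\in W^{1,\rho'}$: the linear dispersive estimate $\|e^{is\Delta}u_{0}\|_{L^{\rho}}\leq Cs^{-\alpha(d-2)/4}\|u_{0}\|_{L^{\rho'}}$ plus a contraction on the integral equation (\ref{eq2}) transfers this decay to $u(s)$, interpolation with the $H^{1}$-bound yields a decay rate of $\|u(s)\|_{L^{4d/(d-2)}}$, and the H\"older product $\||x|u\cdot|u|^{\alpha}\|_{L^{\rho'}}$ becomes $L^{q'}_{s}$-integrable precisely when $\alpha>16/(3d+2)$. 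The dimensional restriction $d\leq 9$ enters because the admissible window $(16/(3d+2),4/(d-2))$ closes for $d\geq 10$. Once this compensation is installed, dominated convergence on the Strichartz tail concludes the proof.
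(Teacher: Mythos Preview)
Your initial reduction is correct and matches the paper: the $H^{1}$-piece is immediate, and the commutation identity reduces everything to showing $|t|\,\|\nabla(e^{-it\Delta}u(t)-u^{\pm})\|_{L^{2}}\to 0$, equivalently that $\||u|^{\alpha}u\|_{L^{q'}((t,\infty),W^{1,\rho'})}$ decays faster than $t^{-1}$. But your subsequent strategy has a genuine gap: you never use the decay estimate
\[
\|u(t)\|_{L^{2d/(d-2)}}\leq C(1+t)^{-1},
\]
which is the engine of the paper's proof. This bound is a direct consequence of the scattering hypothesis $u_{0}\in\mathcal{R}_{+}$: by Proposition~\ref{equivalent} and Theorem~\ref{local existence} the pseudo-conformal transform $v(s)$ stays bounded in $H^{1}$ on $[0,1)$, and Sobolev embedding together with the scaling identity (\ref{PC-T1}) then give the $t^{-1}$ decay of $\|u(t)\|_{L^{2d/(d-2)}}$. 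The paper combines this decay with the hypothesis $u_{0}\in W^{1,\rho'}$ (handling the linear term via the dispersive estimate) to place $u$ in $L^{p}((0,\infty),W^{1,\rho})$ for $p=\chi\gamma$, any $\chi\in(\tfrac12,1)$; the threshold $\alpha>16/(3d+2)$ emerges precisely from balancing these two inputs in the tail estimate (\ref{eq61}).

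Without the pseudo-conformal decay, both halves of your $P_{s}$-splitting fail as written. For the term $|u|^{\alpha}|P_{s}u|$, H\"older gives $\||u|^{\alpha}|P_{s}u|\|_{L^{\rho'}_{x}}\leq \|u\|_{L^{2d/(d-2)}}^{\alpha}\|P_{s}u\|_{L^{\rho}}$; using only the $H^{1}$-bound on the first factor and Strichartz on the second places this in $L^{q}_{s}$, not $L^{q'}_{s}$ (and $q'<2<q$, so this is strictly weaker on an infinite interval). For the term $|x||u|^{\alpha+1}$, your interpolation is impossible: since $\rho<\tfrac{2d}{d-2}<\tfrac{4d}{d-2}$, the exponent $\tfrac{4d}{d-2}$ lies \emph{outside} the interval $[\rho,\tfrac{2d}{d-2}]$, so no decay of $\|u(s)\|_{L^{4d/(d-2)}}$ follows from decay in $L^{\rho}$ and boundedness in $L^{2d/(d-2)}$. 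Finally, the ``contraction on the integral equation'' transferring dispersive decay to $u(s)$ is not justified: the Duhamel integral $\int_{0}^{s}(s-\tau)^{-2/\gamma}\|u(\tau)\|_{L^{2d/(d-2)}}^{\alpha}\|u(\tau)\|_{L^{\rho}}\,d\tau$ has no a~priori decay if you only know $\|u(\tau)\|_{L^{2d/(d-2)}}\leq C$; you need the $(1+\tau)^{-1}$ decay to make this close. The missing idea, in short, is (\ref{eq55})--(\ref{eq56}).
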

\begin{rem}\label{rem3}
Note that for $3\leq d\leq9$, we have $\frac{4}{d}<\frac{16}{3d+2}<\min\{\frac{8}{d+2},\frac{4}{d-2}\}$, however, when $d\geq 10$, we have $\frac{16}{3d+2}\geq \frac{4}{d-2}$.
\end{rem}
\begin{proof}
By Remark \ref{rem1}, we need only prove the results for positive time $t\rightarrow+\infty$, for the negative time $t\rightarrow -\infty$, we can change $u_{0}$ to $\bar{u}_{0}$ and correspondingly change $u(t)$ to $\overline{u(-t)}$ to reach the conclusion. Note also that the Schr\"{o}dinger group is isometric on $H^{1}$, it's obvious that we have $\|u(t)-e^{it\Delta}u^{+}\|_{H^{1}}\rightarrow 0$, as $t\rightarrow\infty$, we need only prove
\[\lim_{t\rightarrow\infty}\|x(u(t)-e^{it\Delta}u^{+})\|_{L^{2}}=0.\]

Let $(\gamma,\rho)$ be an admissible pair with $\rho(\alpha)=\frac{4d}{2d-\alpha(d-2)}$, and let $p=p(\chi)=\chi \gamma
$ and $\tilde{p}$ be such that $\frac{1}{p}+\frac{1}{\tilde{p}}=\frac{2}{\gamma}$, where $\chi$ is any number satisfying $\frac{1}{2}<\chi<1$. Since $u_{0}$ has scattering state $u_{+}$ at $+\infty$, by Proposition \ref{equivalent} and Theorem \ref{local existence}, we have
\begin{equation}\label{eq55}
    \sup_{s\in[0,1)}\|v(s)\|_{H^{1}}<\infty,
\end{equation}
where $v(s)$ is the pseudo-conformal transformation of $u(t)$. Then by Sobolev embedding and (\ref{PC-T1}), we infer
\begin{equation}\label{eq56}
    \sup_{t\in[0,\infty)}(1+t)\|u(t)\|_{L^{\frac{2d}{d-2}}}<\infty.
\end{equation}
By applying the dispersive estimate and Hardy-Littlewood-Sobolev inequality(see \cite{S1}) to the Integral equation (\ref{eq2}), combined with (\ref{eq56}), we get
\begin{eqnarray}
% \nonumber to remove numbering (before each equation)
  \nonumber &&\|u\|_{L^{p}((1,\infty),W^{1,\rho})}\\
  \nonumber &\leq&C(\int_{1}^{\infty}t^{-\frac{2p}{\gamma}}dt)^{\frac{1}{p}}\|u_{0}\|_{W^{1,\rho'}}
  +C\|\int_{0}^{t}(t-\tau)^{-\frac{2}{\gamma}}\|u(\tau)\|_{L^{\frac{2d}{d-2}}}^{\alpha}\|u(\tau)\|_{W^{1,\rho}}d\tau\|_{L^{p}(1,\infty)}\\
  \nonumber &\leq&C+C\|\|u\|_{L^{\frac{2d}{d-2}}}^{\alpha}\|u\|_{W^{1,\rho}}\|_{L^{\tilde{p}'}(1,\infty)}\\
  \nonumber &\leq& C+C(\int_{1}^{\infty}(1+\tau)^{-\frac{\alpha p}{p+1-3\chi}}d\tau)^{1+\frac{1}{p}-\frac{3}{\gamma}}\|u\|_{L^{\gamma}((1,\infty),W^{1,\rho})}.
\end{eqnarray}
Since $3\leq d\leq9$, $\alpha>\frac{16}{3d+2}>\frac{8}{d+6}$, one easy easily verifies $\frac{\alpha p}{p+1-3\chi}>1$ for all $\chi\in(\frac{1}{2},1)$, thus we have
\begin{equation}\label{eq57}
    (\int_{1}^{\infty}(1+\tau)^{-\frac{\alpha p}{p+1-3\chi}}d\tau)^{1+\frac{1}{p}-\frac{3}{\gamma}}<\infty.
\end{equation}
Note that $\alpha>\frac{16}{3d+2}>\frac{4}{d+2}$, by Lemma \ref{priori estimate}, we have
\begin{equation}\label{eq58}
    \|u\|_{L^{\gamma}((0,\infty),W^{1,\rho}(\mathbb{R}^{d}))}<\infty.
\end{equation}
Therefore, we deduce from the above three estimates that
\begin{equation}\label{eq59}
    u\in L^{p(\chi)}((0,\infty),W^{1,\rho}(\mathbb{R}^{d}))
\end{equation}
for all $\chi\in(\frac{1}{2},1)$, $\alpha>\frac{16}{3d+2}$.

Now let $H(\chi)=\frac{16\chi}{(d+6)\chi+d-2}$, for $\frac{1}{2}<\chi<1$. One easily verifies that $H(\chi)$ is monotone increasing on the interval $(\frac{1}{2},1)$, and we have
\[\lim_{\chi\rightarrow\frac{1}{2}+}H(\chi)=\frac{16}{3d+2}.\]
Therefore, for arbitrary fixed $\frac{16}{3d+2}<\alpha_{0}<\frac{4}{d-2}$, there exists a $\chi_{0}\in(\frac{1}{2},1)$ such that
\begin{equation}\label{eq60}
    \alpha_{0}>H(\chi_{0}).
\end{equation}
Let the corresponding index $p_{0}=p(\chi_{0})=\chi_{0}\gamma_{0}$ and $\gamma_{0}=\gamma(\alpha_{0})=\frac{8}{(d-2)\alpha_{0}}$, $\rho_{0}=\rho(\alpha_{0})$, from (\ref{eq56}), (\ref{eq59}) and (\ref{eq60}) we deduce that
\begin{eqnarray}\label{eq61}
% \nonumber to remove numbering (before each equation)
  &&\||u|^{\alpha_{0}}u\|_{L^{\gamma_{0}'}((t,\infty),W^{1,\rho_{0}'})} \\
  \nonumber &\leq&C(\int_{t}^{\infty}(1+\tau)^{-\frac{\alpha_{0}p_{0}}{p_{0}-1-\chi_{0}}}d\tau)^{1-\frac{1}{\gamma_{0}}-\frac{1}{p_{0}}}
  \|u\|_{L^{p_{0}}((t,\infty),W^{1,\rho_{0}})}\\
  \nonumber &\leq&C(1+t)^{-(\frac{2\alpha_{0}}{H(\chi_{0})}-1)}
\end{eqnarray}
for any $t>0$. Thus by Strichartz's estimates, we have for all $t>0$,
\begin{equation}\label{eq62}
    \|e^{-it\Delta}u(t)-u^{+}\|_{H^{1}}\leq C\||u|^{\alpha_{0}}u\|_{L^{\gamma_{0}'}((t,\infty),W^{1,\rho_{0}'})}\leq C(1+t)^{-(\frac{2\alpha_{0}}{H(\chi_{0})}-1)}.
\end{equation}
Applying the commutative properties of the operator $P_{t}=x+2it\nabla$(see (\ref{P3})), a simple calculation shows that
\begin{equation}\label{eq63}
    x(u(t)-e^{it\Delta}u^{+})=e^{it\Delta}[x(e^{-it\Delta}u(t)-u^{+})+2it\nabla(u^{+}-e^{-it\Delta}u(t))].
\end{equation}
Using (\ref{eq62}) and (\ref{eq63}), we have
\begin{eqnarray}\label{eq64}
% \nonumber to remove numbering (before each equation)
   \nonumber \|x(u(t)-e^{it\Delta}u^{+})\|_{L^{2}}&\leq&\|x(e^{-it\Delta}u(t)-u^{+})\|_{L^{2}}+
   2t\|\nabla(e^{-it\Delta}u(t)-u^{+})\|_{L^{2}} \\
  &\leq&\|x(e^{-it\Delta}u(t)-u^{+})\|_{L^{2}}+C(1+t)^{-2(\frac{\alpha_{0}}{H(\chi_{0})}-1)}.
\end{eqnarray}
Note that $u_{0}\in\mathcal{R}_{+}$, so we get
\begin{equation}\label{eq65}
    \|x(e^{-it\Delta}u(t)-u^{+})\|_{L^{2}}\rightarrow 0,
\end{equation}
as $t\rightarrow\infty$, note also by (\ref{eq60}), we have $\frac{\alpha_{0}}{H(\chi_{0})}-1>0$, thus we deduce from (\ref{eq64}) that
\begin{equation}\label{eq66}
    \|x(u(t)-e^{it\Delta}u^{+})\|_{L^{2}}\rightarrow 0,
\end{equation}
as $t\rightarrow \infty$, it's the convergence that we need. Note that $\alpha_{0}$ is an arbitrary number satisfying $\alpha_{0}\in(\frac{16}{3d+2},\frac{4}{d-2})$, thus we have proved the conclusion for all $\frac{16}{3d+2}<\alpha<\frac{4}{d-2}$, $3\leq d\leq9$.
\end{proof}

{\bf Acknowledgements:} The author's research is supported by a Young Researcher's Fellowship of the Academy of Mathematics and Systems Science, Chinese Academy of Sciences.\\

\end{document}